\numberwithin{equation}{section}
\renewenvironment{shaded}{%
	\color{gray!60}%
	\MakeFramed{\FrameRestore}%
}{%
	\endMakeFramed%
}
\title[$\mathbb{A}$-quasiconvexity, partial regularity and Orlicz growth]{Partial regularity for $\mathbb{A}$-quasiconvex \\  functionals with Orlicz growth}
\newcommand{\R}{\mathbb{R}}
\newcommand{\locc}{\operatorname{loc}}
\newcommand{\lebe}{\operatorname{L}}
\newcommand{\dif}{\operatorname{d}\!}
\newcommand{\sobo}{\operatorname{W}}
\newcommand{\hold}{\operatorname{C}}
\newtheorem{defi}{Definition}[section]
\newtheorem{lemma}[defi]{Lemma}
\newtheorem{bem}[defi]{Remark}
\newtheorem{satz}[defi]{Theorem}
\newtheorem{beisp}[defi]{Example}
\subjclass{35B65, 35J50, 46E30, 35A23}
\keywords{Partial regularity, Orlicz growth,  quasiconvexity, differential operators, Korn inequalities}
\begin{document}
	
	\author{Paul Stephan}
	\address{P.S.: Department of Mathematics and Statistics, University of Konstanz, Universit\"{a}tsstra\ss e 10, 78464 Konstanz, Germany}
	\email{paul.stephan@uni-konstanz.de}
	
	\maketitle

	\begin{abstract}
		We establish partial regularity results for minimizers of a class of functionals depending on differential expressions based on elliptic operators. Specifically, we focus on functionals of Orlicz growth with a natural strong quasiconvexity property. In doing so, we consider both $\Delta_{2}\cap\nabla_{2}$-Orlicz growth scenarios and, as a limiting case, $L \log L$-growth. Inspired by {\textsc{Conti \&
				Gmeineder} (Calc. Var. \& PDE 61:215, 2022)}, the proofs of our main results are accomplished by reduction to the case of full gradient partial regularity results.
	\end{abstract}
	
	\tableofcontents
	
	\section{Introduction}
	Let $\Omega\subset\R^{n}$ be open and bounded with Lipschitz boundary. Numerous variational problems, such as those in (stationary) elasticity or fluid mechanics, model key quantities such as displacements or velocities using functionals of the type 
	\begin{equation}
		\widetilde{\mathcal{F}}[u] :=  \int_\Omega F(\varepsilon(u)) \dd{x}. \label{eq:intr}
	\end{equation}
	Here, $\varepsilon(u) = \frac{1}{2} \left( \nabla u + (\nabla u)^\top \right)$ denotes the symmetric gradient of a map  $u\colon\Omega\to\R^{n}$. Different models, however,  involve more general operators than the symmetric gradient.  
	In view of a general framework which comprises \eqref{eq:intr} as a special case, we replace $\varepsilon$ by an arbitrary homogeneous differential operator $\mathbb{A}$, meaning that $\mathbb{A}$ has a  representation \begin{equation}\label{eq:diffop}\mathbb{A}u := \sum_{i=1}^n A_i \frac{\partial}{\partial x_i} u,\qquad u\colon \Omega\subset\R^{n}\to \R^{k}, 
	\end{equation}
	for certain linear maps {$A_{i}\colon \mathbb{R}^k \to \mathbb{R}^l$.} In this general framework, we consider functionals of the form
	\begin{equation}\label{eq:functionalform}
		\mathcal{F}[u] := \int_\Omega F(\mathbb{A}u) \, \dd{x}.
	\end{equation}
	By making different choices of $F$ and $\mathbb{A}$, functionals of the form \eqref{eq:functionalform} provide a versatile approach to various energy minimization problems not only in elasticity, but also in broader contexts, such as continuum mechanics and fluid dynamics, {and also in problems from general relativity}. For a detailed discussion on these applications, see, for instance, \cite{bartnik_constraint_2004,BehnGmeinederSchiffer, conti_symmetric_2020, dain, FonsecaMuller, friedrichs_boundary-value_1947, fuchs_variational_2000}. Once  {appropriate} boundary conditions and semiconvexity assumptions on the integrands are imposed, it is then possible to establish the existence of minimizers of functionals \eqref{eq:functionalform} in suitable spaces of weakly differentiable functions.
	
	Once this is achieved, it is then natural to inquire as to which conditions have to be assumed on $F$ for minimizers to share better regularity properties than generic competitors. In the present paper, we deal with this question in the realm of gradient H\"{o}lder regularity. Since the minimization of \eqref{eq:functionalform} is a genuinely vectorial problem, it is well-known that minimizers are not necessarily fully $\hold^{1, \alpha}$-regular in general (see, e.g., \cite{DeGiorgi1968, giusti1968esempio, mazya_examples_1968}). Consequently, a \textsc{De Giorgi-Nash-Moser} theory is not available. In this regard, a suitable substitute is provided by the ($\hold_{\locc}^{1,\alpha}$-)\emph{partial regularity} of  minimizers, meaning that they are of class $\hold^{1,\alpha}_\text{loc}(U)$ for all $0<\alpha<1$, where $U\subset\Omega$ is a relatively open set with full Lebesgue measure: $\mathscr{L}^{n}(\Omega\setminus U)=0$.
	
	In view of the direct method of the Calculus of Variations and based on the fact that the minimization of \eqref{eq:functionalform} is a vectorial problem, it is natural to suppose that $F$ satisfies a strong variant of \textsc{Morrey}'s \emph{quasiconvexity}, implying both lower semicontinuity and coercivity, for example, on Dirichlet subclasses of Sobolev spaces.
	If $\mathbb{A}=\nabla$ is the usual gradient, a rather complete picture of partial regularity is available in this situation; see \cite{acerbi_regularity_2001, bärlin2022mathcalaharmonic, carozza_partial_1998, carozza_regularity_1996,
		Filippis1,  Filippis2, diening_partial_2012, evans_quasiconvexity_1986,  gmeineder_partial_2019-1, gmeineder_quasiconvex_2024,  kristensen_partial_2003, kuusi_partial_2016, DarkSide,schmidt1, schmidt_regularity_2009} for a non-exhaustive list. More precisely, starting with \textsc{Evans} \cite{evans_quasiconvexity_1986} for strongly quasiconvex functionals with $p$-growth for $2 \leq p < \infty$, \cite{carozza_partial_1998, carozza_regularity_1996} for $1 < p \leq 2$, and finally \cite{gmeineder_partial_2019-1} for $p=1$, partial regularity results are now available for all $p$-growth functionals. Here, we say that $F\in\hold(\R^{N\times n})$ has $p$-growth, $1\leq p<\infty$, if there exists a constant $c>0$ such that 
	\begin{align}\label{eq:pgrowth}
		|F(z)|\leq c\,(1+|z|^{p})\qquad\text{for all}\;z\in\R^{N\times n}. 
	\end{align}
	{In \cite{schmidt1,schmidt_regularity_2009}, \textsc{Schmidt} generalized these results to functionals of $(p,q)$ growth for $p>1$.}  The works of \textsc{Diening} et al. \cite{diening_partial_2012} and \textsc{Breit \& Verde} \cite{breit_quasiconvex} extended {this framework} to the context of Orlicz growth, in turn using and modifying \textsc{Duzaar \& Steffen's} method of $\mathcal{A}$-harmonic approximation; see \cite{duzaar_optimal_2003}. {Finally, \textsc{Gmeineder} \& \textsc{Kristensen} provided the generalization for the case $p=1$ in \cite{gmeineder_quasiconvex_2024}.}
	
	While the aforementioned  results deal with the case $\mathbb{A}=\nabla$, there are further results on the interaction of partial regularity and general operators $\mathbb{A}$. Based on earlier work by \textsc{Gmeineder} \cite{gmeineder_partial_2021} for the symmetric gradient, \textsc{Conti \& Gmeineder} established in \cite{contiFXG} that, under a suitable $\mathbb{A}$-quasiconvexity condition on  $F$, the partial regularity of minimizers of $p$-growth functionals \eqref{eq:functionalform} with $1<p<\infty$ is equivalent to $\mathbb{A}$ being \emph{elliptic}. This means that the Fourier symbol map $$\mathbb{A}[\xi] := \sum_{i=1}^n A_i \xi_i \colon \R^k \to \R^l$$ is injective for all phase space variables $\xi \in \R^n \backslash \{0\}$. 
	
	Whereas the $p$-growth assumption is satisfied in numerous situations, there are plenty of models that require working with integrands that satisfy a more flexible condition than \eqref{eq:pgrowth}. An instance of this include models that involve logarithmic hardening of materials, or stationary models for fluids of \textsc{Prandtl-Eyring}-type; see \cite{BREIT20121910,FrehseSeregin,Fuchs1999VariationalMF,fuchs_variational_2000} for more detail.  {Note that the energies underlying these references are usually convex. However, as shall be discussed in detail below, the reduction to the partial regularity for \emph{full gradient} functionals can only be accomplished by viewing them as suitable $\mathbb{A}$-quasiconvex functionals.}

	To provide a unified approach that includes such models, the present paper deals with strongly $\mathbb{A}$-quasiconvex functionals of \emph{Orlicz growth}. In this context, our first result is as follows: 
	\begin{satz}[Partial regularity, $\Delta_2 \cap \nabla_2$-growth] \label{OrliczMainThm}
		Let $\Omega \subset \mathbb{R}^n$ be open and bounded. Moreover, let $\varphi \in \Delta_2 \cap \nabla_2$, and suppose that $\mathbb{A}$ is a first order constant rank differential operator of the form \eqref{eq:formA}, and that the variational integrand $F: {\mathbb{R}^l} \to \mathbb{R}$ satisfies the following conditions:
		\begin{enumerate}[label=(F\arabic*)]
			\item $F \in \mathrm{C}^{2}({\mathbb{R}^l})$. \label{F1}
			\item There exists a constant $c > 0$ such that $$|F(z)| \leq c(1+\varphi(|z|))$$ holds for all $z \in {\mathbb{R}^l}$. \label{F2}
			\item\label{item:F3} $\varphi$-strong $\mathbb{A}$-quasiconvexity: For every $M > 0$, there exists $\nu_M > 0$ such that
			\[
			\int_{(0,1)^n} \bigl( F(z_0 + \mathbb{A} \zeta) - F(z_0) \bigr) \, \dd{x} \geq \nu_M \int_{(0,1)^n} \varphi_{1+|z_0|}(|\mathbb{A} \zeta|) \, \dd{x}
			\]
			holds for all $\zeta \in \mathrm{C}^{\infty}_c((0,1)^n;{\mathbb{R}^k})$ and $|z_0| \leq M$. Here, $\varphi_{1+|z_0|}$ is the \emph{shifted $N$-function} as defined in \eqref{eq:shift} below. \label{F3}
		\end{enumerate}
		Then the following statements are equivalent:
		\begin{enumerate}[(a)]
			\item $\mathbb{A}$ is elliptic.
			\item Every local minimizer $u \in \lebe^\varphi_\mathrm{loc}(\Omega;  {\mathbb{R}^k})$ with $\mathbb{A}u \in \lebe^\varphi_\mathrm{loc}(\Omega;  {\mathbb{R}^l})$ of $\mathcal{F}$ as in \eqref{eq:functionalform}  is $\mathrm{C}^{1,\alpha}_{\mathrm{loc}}$-partially regular.
		\end{enumerate}
	\end{satz}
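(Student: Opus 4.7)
The equivalence splits into two implications of very different difficulty. The direction (b)$\Rightarrow$(a) is essentially a soft counterexample argument: if $\mathbb{A}$ fails to be elliptic, pick $\xi_0 \in \R^n\setminus\{0\}$ and $v_0 \in \ker \mathbb{A}[\xi_0] \setminus \{0\}$. For any bounded scalar function $h$, the map $u(x) := h(\xi_0 \cdot x)\, v_0$ satisfies $\mathbb{A}u = 0$ distributionally and is therefore a trivial local minimizer of $\mathcal{F}$ lying in $\lebe^{\varphi}_{\locc}(\Omega;V)$. Choosing $h$ continuous but nowhere differentiable (e.g.\ a Weierstrass-type function) produces a local minimizer whose $\mathrm{C}^{1,\alpha}$-regular set is empty, contradicting (b).

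The substantive direction is (a)$\Rightarrow$(b), and the plan is to follow the ``reduction to the full gradient'' philosophy of Conti-Gmeineder \cite{contiFXG}, transplanted from $p$-growth to $\varphi \in \Delta_2 \cap \nabla_2$-growth. The algebraic engine is that for elliptic constant rank $\mathbb{A}$ of order one, pointwise injectivity of $\mathbb{A}[\xi]\colon V\to W$ on $\xi\neq 0$ yields a zero-order Fourier multiplier reconstructing $\nabla u$ from $\mathbb{A}u$, modulo the finite-dimensional kernel of $\mathbb{A}$. Since $\Delta_2 \cap \nabla_2$ is precisely the range in which vector-valued Calder\'on-Zygmund operators remain bounded on $\lebe^\varphi$, this algebraic fact upgrades to shifted $\mathbb{A}$-Korn inequalities of the form
\[
\fint_{\ball_r(x_0)} \varphi_a(|\nabla u - \langle \nabla u\rangle_{x_0,r}|) \, \dd{x} \leq c\, \fint_{\ball_{2r}(x_0)} \varphi_a(|\mathbb{A}u - \langle \mathbb{A}u\rangle_{x_0,2r}|) \, \dd{x},
\]
uniformly in the shift $a \geq 0$, providing the bridge needed to invoke the full-gradient Orlicz partial regularity results of Diening et al.\ \cite{diening_partial_2012} and Breit-Verde \cite{breit_quasiconvex}.

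With this bridge in place, the main argument is the classical excess-decay scheme via $\mathbb{A}$-harmonic approximation, adapted to shifted Orlicz spaces. Working with the hybrid excess
\[
\Phi(x_0, r) := \fint_{\ball_r(x_0)} \varphi_{1+|\langle \mathbb{A}u\rangle_{x_0,r}|}\bigl(|\mathbb{A}u - \langle \mathbb{A}u\rangle_{x_0,r}|\bigr) \, \dd{x},
\]
the three technical pillars are: (i) a Caccioppoli inequality of the second kind, obtained by testing the $\varphi$-strong $\mathbb{A}$-quasiconvexity \ref{F3} against cutoffs of $u$ minus its affine comparison; (ii) an $\mathbb{A}$-harmonic approximation lemma in shifted $\lebe^\varphi$, converting approximate $\mathbb{A}$-harmonicity of $u$ into closeness to an exact $\mathbb{A}$-harmonic $h$; and (iii) Schauder-type estimates for the constant-coefficient linearized system, whose ellipticity is inherited from that of $\mathbb{A}$, giving an excess decay $\Phi(x_0, \theta r) \leq C\theta^2 \Phi(x_0,r)$ for sufficiently small $\theta$. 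Iterating on the set of points where $\Phi$ is small and using the shifted Korn inequality to promote H\"older continuity of $\mathbb{A}u$ to that of $\nabla u$ yields $\mathrm{C}^{1,\alpha}_{\locc}$-partial regularity on a relatively open set of full measure.

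The main obstacle is the Orlicz bookkeeping centered on the shifted $N$-functions $\varphi_{1+|z_0|}$. In the $p$-growth case these shifts are self-similar and most estimates reduce to weighted power computations; in the general Orlicz setting one must systematically apply the change-of-shift lemma, track constants uniformly in $|z_0| \leq M$ as required by \ref{F3}, and re-verify both the Caccioppoli and the harmonic approximation lemmas in the shifted framework. Once these shifted-Orlicz analogues of the $p$-growth ingredients are secured, the final reduction to the full-gradient partial regularity results is largely formal.
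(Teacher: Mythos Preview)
Your direction (b)$\Rightarrow$(a) is fine and essentially matches the paper's argument (the paper phrases it as adding a nowhere-continuous $v\in\ker\mathbb{A}$ to a regular minimizer via Lemma~\ref{Kernel}, but your plane-wave construction works equally well once one notes that \ref{item:F3} at $z_0=0$ forces any $u$ with $\mathbb{A}u=0$ to be a local minimizer).

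For (a)$\Rightarrow$(b), however, you are proposing far more machinery than the paper actually uses, and you are conflating two distinct strategies. The paper does \emph{not} run an excess-decay scheme for $\mathbb{A}$-functionals: there is no $\mathbb{A}$-Caccioppoli inequality, no $\mathbb{A}$-harmonic approximation lemma, and no local shifted Korn inequality on balls of the form you wrote. Instead, the reduction happens entirely at the level of the integrand. One sets $G(z):=F(\pi_{\mathbb{A}}(z))$ for the linear map $\pi_{\mathbb{A}}$ with $\mathbb{A}v=\pi_{\mathbb{A}}(\nabla v)$, so that $\int_\omega G(\nabla u)\,\dd x=\int_\omega F(\mathbb{A}u)\,\dd x$ identically, and hence local minimizers of $\mathcal{F}$ are automatically local minimizers of the full-gradient functional $\mathcal{G}$. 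The only nontrivial point is to verify that $G$ satisfies the strong-quasiconvexity hypothesis (H\ref{H3}) of the full-gradient black box (Theorem~\ref{thm:IrvMainReg}); this follows from \ref{item:F3} together with the \emph{global} zero-boundary Korn inequality $\int_{\R^n}\varphi_{1+|z_0|}(|\nabla\zeta|)\,\dd x\lesssim \int_{\R^n}\varphi_{1+|z_0|}(|\mathbb{A}\zeta|)\,\dd x$ for $\zeta\in\hold_c^\infty$, whose constant is uniform in the shift by Lemma~\ref{Delta:Nabla:Comp}. One then simply cites Theorem~\ref{thm:IrvMainReg} and is done. Your proposed route---establishing Caccioppoli, harmonic approximation, and excess decay directly for $\mathbb{A}$---would in principle also lead to the result, but it re-derives from scratch what is already packaged inside the full-gradient theorem, and it additionally requires the local mean-oscillation Korn inequality on balls, which for merely elliptic (as opposed to $\mathbb{C}$-elliptic) operators is a more delicate statement than the compactly-supported version the paper actually needs.
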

	
	In stating the previous theorem, we made use of various notions to be briefly addressed now: Firstly, the condition $\varphi\in\Delta_{2}\cap\nabla_{2}$ on the $N$-function $\varphi\colon [0,\infty)\to[0,\infty)$ means that $\varphi$ neither grows too slowly nor too fast; see Section \ref{sec:prelims} for the precise definition. Secondly, \ref{item:F3} is a version of \textsc{Fonseca \& M\"{u}ller}'s $\mathscr{A}$-quasiconvexity \cite{FonsecaMuller} adapted to the setting considered here; see \textsc{Raita} \cite{Raita} for the connections between $\mathscr{A}$- and $\mathbb{A}$-quasiconvexity. In the form as given here, \ref{item:F3} not only implies lower semicontinuity in Orlicz-Sobolev spaces, but also the coercivity of the functionals \eqref{eq:functionalform} on Dirichlet classes; see Section \ref{sec:quasiconvex} for more detail. Lastly, the notion of \emph{local minimality} underlying \ref{item:F3} is classical: We say that a function $u\in\sobo_{\locc}^{1,\varphi}(\Omega; {\mathbb{R}^k})$ is a \emph{local minimizer} provided that 
	\begin{equation*}
		\int_\omega F(\mathbb{A}u) \, \dd{x} \leq \int_\omega F(\mathbb{A}(u+\zeta)) \, \dd{x}
	\end{equation*}
	holds for every open ${\omega} \Subset \Omega$ and all maps $\zeta \in \sobo_{0}^{1,\varphi}(\omega; {\mathbb{R}^k})$.
	
	Following \textsc{Conti \& Gmeineder} \cite{contiFXG}, our strategy to establish Theorem \ref{OrliczMainThm} is \emph{not} to execute an entire partial regularity proof by employing, for example, the methods of blow-up, $\mathcal{A}$-harmonic approximation or similar techniques. Instead, we aim to reduce Theorem \ref{OrliczMainThm} to the main results of \cite{diening_partial_2012,Irving2021partial} by using suitable Korn-type inequalities adapted to the differential operator $\mathbb{A}$. Inequalities of this sort have recently attracted significant interest, as seen in  \cite{breit_trace-free_2017, breit_sharp_2012}, and are of a particularly simple form if $\varphi\in\Delta_{2}\cap\nabla_{2}$. Roughly speaking, if $\mathbb{A}u\in\lebe^{\varphi}(\Omega;{\mathbb{R}^l})$ for $\varphi\in\Delta_{2}\cap\nabla_{2}$, then $\nabla u\in\lebe^{\varphi}(\Omega;{\mathbb{R}^k}\times\R^{n})$, and it is this insight that allows us to reduce Theorem \ref{OrliczMainThm} to the findings of \cite{diening_partial_2012,Irving2021partial} in Section \ref{Proof}. 
	
	Variational integrands $F$ of $\varphi$-growth, $\varphi\in\Delta_{2}\cap\nabla_{2}$, do not always provide the right framework; examples include models with logarithmic hardening. Here, $\varphi(t)=t\log(1+t)$, whereby  $\varphi$ is of class $\Delta_{2}\setminus\nabla_{2}$. In the special cases where $\mathbb{A}=\varepsilon$ or $\mathbb{A}=\varepsilon^{D}$ with the \emph{deviatoric} symmetric gradient $\varepsilon^{D}u:=\varepsilon(u)-\frac{1}{n}\mathrm{div}(u)E_{n}$, it is known from \textsc{Cianchi} et al. \cite{cianchi_korn_2014,breit_trace-free_2017} that $\mathbb{A}u\in\lebe^{\varphi}(\Omega;\R^{n\times n})$ only implies that $\nabla u\in\lebe^{1}(\Omega;\R^{n\times n})$. As we show in Section \ref{sec:LlogL}, this loss of one logarithm in passing from $\mathbb{A}u$ to $\nabla u$ persists for general elliptic operators. Yet, resorting to the recent work \cite{gmeineder_quasiconvex_2024} of \textsc{Gmeineder \& Kristensen}, this still allows us to implement a variant of the above reduction scheme. More precisely, the second main result of the present paper is as follows: 
	\begin{satz}[Partial regularity -- $L \log L$-growth] \label{OrliczMainThm2}
		Let $\Omega \subset \R^n$ be open and bounded, and let $\varphi: \R_{\geq 0} \to \R_{\geq 0}$ be given by $\varphi\colon t \mapsto t \log(1+t)$. Moreover, suppose that the variational integrand $F:{\mathbb{R}^l} \to \mathbb{R}$ satisfies \ref{F1}-\ref{F3} from above. If $\mathbb{A}$ is a constant rank operator of the form \eqref{eq:formA}, then every local minimizer $u\in\lebe_{\locc}^{\varphi}(\Omega;{\mathbb{R}^k})$ of $\mathcal{F}$ as in \eqref{eq:functionalform} with $\mathbb{A}u\in\lebe_{\locc}^{\varphi}(\Omega;{\mathbb{R}^l})$ is $\hold_\text{loc}^{1, \alpha}$-partially regular if and only if $\mathbb{A}$ is elliptic. 
	\end{satz}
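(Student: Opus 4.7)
The plan is to mirror the reduction scheme used for Theorem \ref{OrliczMainThm}: reduce the $\mathbb{A}$-quasiconvex partial regularity problem to a full-gradient partial regularity result by means of an appropriate Korn-type inequality. The difference is that for $\varphi(t)=t\log(1+t)$, Korn's inequality now loses one logarithm (as will be established in Section \ref{sec:LlogL}), so the target theorem is not \cite{diening_partial_2012, Irving2021partial} but rather the near-linear-growth partial regularity result of Gmeineder and Kristensen \cite{gmeineder_quasiconvex_2024}. As with Theorem \ref{OrliczMainThm}, the equivalence splits into two directions; the implication \emph{partial regularity $\Rightarrow$ ellipticity} follows from the same rank-one counterexample construction as in \cite{contiFXG}, so the substantive content is the converse.

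\textbf{Forward direction.} Assume that $\mathbb{A}$ is elliptic and let $u$ be a local minimizer. I would first apply the $L\log L$-Korn-type inequality from Section \ref{sec:LlogL}: since $\mathbb{A}u \in \lebe^{\varphi}_{\locc}(\Omega;W)$, this yields $\nabla u \in \lebe^{1}_{\locc}(\Omega; V \otimes \R^n)$ modulo an element of the finite-dimensional kernel of $\mathbb{A}$. Next, construct an auxiliary integrand $G\colon V \otimes \R^n \to \R$ such that $G(\nabla u) = F(\mathbb{A}u)$, taking $G(Z) := F(\widehat{\mathbb{A}}Z)$ with $\widehat{\mathbb{A}} \colon V \otimes \R^n \to W$ the linear map defined by $\widehat{\mathbb{A}}(\nabla u) = \mathbb{A}u$. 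Since $|\widehat{\mathbb{A}}Z| \leq c|Z|$, assumption \ref{F2} transfers to $|G(Z)| \leq c(1+\varphi(|Z|))$; and \ref{F3}, combined with the shifted $L\log L$-Korn inequality on the unit cube, yields gradient-level $\varphi$-strong Morrey quasiconvexity for $G$ on the full matrix space. It then suffices to check that $u$ is a local minimizer of $\int_\Omega G(\nabla v)\,\dd{x}$ -- which is immediate from the identity $\mathbb{A}\zeta = \widehat{\mathbb{A}}(\nabla\zeta)$ for test fields $\zeta$ -- and to invoke the partial regularity theorem of \cite{gmeineder_quasiconvex_2024}.

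\textbf{Reverse direction and main obstacle.} For the necessity of ellipticity, if $\mathbb{A}[\xi_0]v_0 = 0$ for some $\xi_0 \in \R^n\setminus\{0\}$ and $v_0 \in V\setminus\{0\}$, then the plane-wave ansatz $u(x) = v_0 f(x\cdot\xi_0)$ satisfies $\mathbb{A}u \equiv 0$ for any scalar function $f$, and the $L^{\varphi}$-integrability of $\mathbb{A}u$ is trivial. This allows the construction of \cite{contiFXG} to be imported with only minor adjustments to produce a local minimizer whose singular set has positive Lebesgue measure. The principal obstacle lies in the forward direction: one must trace the loss-of-one-logarithm through the \emph{shifted} $L\log L$-Korn inequality so that the gradient-level strong quasiconvexity of $G$ inherits exactly the quantitative dependence on $\varphi_{1+|Z_0|}$ demanded by \cite{gmeineder_quasiconvex_2024}. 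Establishing this quantitative shifted Korn inequality for a general elliptic constant-rank operator -- beyond the classical cases $\varepsilon$ and $\varepsilon^{D}$ treated in \cite{cianchi_korn_2014,breit_trace-free_2017} -- is expected to be the technically most delicate step.
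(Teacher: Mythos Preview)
Your high-level strategy---reduce to a full-gradient result via a Korn-type inequality that loses one logarithm, landing in the framework of \cite{gmeineder_quasiconvex_2024}---is indeed the paper's approach. However, two concrete points in your proposal are off, and together they constitute a genuine gap.

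First, you write that the shifted Korn inequality will give ``gradient-level $\varphi$-strong Morrey quasiconvexity for $G$'' with the dependence $\varphi_{1+|Z_0|}$. This is precisely what \emph{fails}: the loss of one logarithm means that from $\int\varphi_{1+|z_0|}(|\mathbb{A}\zeta|)\,\dd{x}$ you only recover a bound of linear-growth type in $\nabla\zeta$. The paper makes this explicit by constructing auxiliary Young functions $\Psi$ (encoding $V_\varphi$) and $\Phi$ (encoding $V_1(z)=\sqrt{1+|z|^2}-1$), verifying the Cianchi-type hypotheses \eqref{3.1}--\eqref{3.2}, and concluding $\int\Phi(|\nabla\zeta|)\lesssim\int\Psi(|\mathbb{A}\zeta|)$. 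The upshot is that $G$ is only $V_1$-strongly quasiconvex, and the target theorem in \cite{gmeineder_quasiconvex_2024} is the one for \emph{linear growth} integrands and $\mathrm{BV}$-minimizers---not an Orlicz-growth result taking $\varphi_{1+|Z_0|}$ as input.

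Second, and more seriously, this forces a step you treat as ``immediate'': one does not simply check that $u$ minimizes $\int G(\nabla v)\,\dd{x}$ among Sobolev competitors. The Gmeineder--Kristensen theorem applies to local $\mathrm{BV}$-minimizers of the \emph{weak*-relaxed} functional $\bar{\mathcal{G}}^*$, so you must show that a local minimizer $u\in\sobo^{\mathbb{A},\varphi}_{\locc}$ is a local $\mathrm{BV}$-minimizer of $\bar{\mathcal{G}}^*$ against compactly supported $\mathrm{BV}$-variations. The paper accomplishes this via the density identity $\sobo_0^{1,1}\cap\sobo^{\mathbb{A},\varphi}=\sobo_0^{\mathbb{A},\varphi}$ (Lemma \ref{lem:density}), a trace argument to produce a suitable Dirichlet datum $u_0\in\sobo^{1,q}$ on small balls, and an approximation along generating sequences for the relaxation. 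None of this is addressed in your proposal, and it is where the actual technical work lies---not in a ``shifted'' Korn inequality (the paper uses the unshifted Cianchi-type inequality and handles the shifts separately through Lemmas \ref{lem:aux1}, \ref{lem:aux2}, and \ref{lem:comp}).
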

	Theorem \ref{OrliczMainThm2} can be understood as a limiting case for the reduction strategy from \cite{contiFXG}. Namely, if we pass to $N$-functions $\varphi$ of sub-logarithmic growth, then $\mathbb{A}u\in\lebe^{\varphi}(\Omega;{\mathbb{R}^l})$ does not necessarily imply that $\nabla u\in\lebe_{\locc}^{1}(\Omega; {\mathbb{R}^k}\times\R^{n})$. In this sense, the previous theorem extends the reduction strategy optimally in this direction. We wish to point out that for the end point $p=1$, it is not possible to employ such a reduction strategy for elliptic operators. Here, partial regularity can still be established by use of a variant of $\mathcal{A}$-harmonic approximation for $\mathbb{C}$-elliptic operators; see \cite{bärlin2022partialregularitymathbbaquasiconvexfunctionals}. 
	
	For completeness, we note that it is equally interesting to consider the other end of the scale, for example, integrands of exponential growth as in \textsc{Beck \& Mingione} \cite{beck_lipschitz_2020}. To the best of our knowledge, very little is known about the partial regularity of minimizers even in the full gradient case. Since we are here primarily interested in implementing the reduction strategy of \cite{contiFXG} to functionals \eqref{eq:functionalform}, we thus defer this task to future work. 
	\subsection*{Structure of the paper} We briefly comment on the organization of the paper. In Section \ref{sec:prelims}, we collect background results on Orlicz spaces and differential operators. Section \ref{sec:quasiconvex} is concerned with convenient reformulation of the $\varphi$-strong quasiconvexity assumption, and also displays its connections to coercivity. Section \ref{Proof} then provides the proof of Theorem \ref{OrliczMainThm} in the case of $\Delta_{2}\cap\nabla_{2}$-growth, and Section \ref{sec:LlogL} establishes Theorem \ref{OrliczMainThm2} in the limiting case of integrands with $L\log L$-growth. 
	\section{Preliminaries}\label{sec:prelims}
	\subsection{General notation}
	
	{To enable a broad application of the operator $\mathbb{A}$, we consider our results in the context of finite-dimensional real inner product spaces $V$ and $W$. This generalizes the specific choice of Euclidean spaces $\mathbb{R}^k$ and $\mathbb{R}^l$ from the introduction. Consequently, we consider from now on $u \colon \Omega \to V$ and $\mathbb{A}u \colon  \Omega \to W$. }
	
	We write $ a \lesssim b $ if $ a \leq c b $ and $ a \gtrsim b $ if $ a \geq c b $  for some constant $c > 0 $. If $a \lesssim b$ and $b \lesssim a$, then we write $a \simeq b$. If  necessary, we specify the dependence of the constant $c$. Moreover, we use $\langle \cdot, \cdot \rangle$ to represent the scalar product on $\mathbb{R}^n$.
	\subsection{Orlicz spaces}
	Next, we provide the requisite background material on Orlicz spaces (see \cite{adams_sobolev, ren_theory_1991}). A function $\psi: \mathbb{R}_{\geq 0} \to \mathbb{R}_{\geq 0}$ is called a \emph{Young function} if it is convex, continuous, and satisfies $\psi(0) = 0$ and $\psi(t) \to \infty$ as $t \to \infty$.
	A function $\psi: \mathbb{R}_{\geq 0} \to \mathbb{R}_{\geq 0}$ is called an \emph{$N$-function} if it is differentiable and satisfies the following criteria:
	\begin{itemize}
		\item $\psi(0) = 0$,
		\item $\psi'$ is right continuous and non-decreasing,
		\item $\psi'(0) = 0$,
		\item $\psi'(t) > 0$ for $t > 0$, and
		\item $\lim \limits_{t \to \infty} \psi'(t) = \infty$.
	\end{itemize}
	\noindent
	We say that an \emph{$N$-function} $\psi$ belongs to the class $\Delta_2$ and write $\psi \in \Delta_2$, if there exists a constant $K > 0$ such that $\psi(2t) \leq K \psi(t)$ holds for all $t \geq 0$. In this case, we denote the infimum of such constants $K$ by
	\[
	\Delta_2(\psi) := \inf \left\{ K > 0 : \psi(2t) \leq K \psi(t) \text{ for all } t > 0 \right\}.
	\]
	Analogously to the $\Delta_2$-condition, we define the $\nabla_2$-condition using the {convex} conjugate. Given an $N$ function $\psi$, its {convex conjugate} $\psi^*$ is defined as
	\begin{align}\label{eq:Fenchel}
		\psi^*(t) := \sup_{s \geq 0} (st - \psi(s)), \quad t \geq 0.
	\end{align}
	An $N$-function $\psi$ is of class $\nabla_2$ if its conjugate $\psi^*$ satisfies the $\Delta_2$-condition. We denote this by $\psi \in \nabla_2$ and define
	\[
	\nabla_2(\psi) := \Delta_2(\psi^*).
	\]
	Given an $N$-function $\psi: \R_{\geq 0} \rightarrow \R_{\geq 0}$, we define the \emph{Luxemburg norm} of a measurable function $f: \Omega \to V$ on a measurable set $\Omega$ by
	
	\begin{equation*}
		\|f\|_{\lebe^\psi(\Omega; V)} = \inf \left\{ \lambda > 0 : \int_{\Omega} \psi \left(\frac{|f(x)|}{\lambda}\right) \,\dd{x} \leq 1 \right\}.
	\end{equation*}
	
	\noindent
	The Orlicz space $\lebe^\psi(\Omega;V)$ consists of all measurable functions $f: \Omega \to V$ for which the Luxemburg norm is finite. For instance, if $\psi(t) = |t|^p$, the Orlicz space $\lebe^\psi(\Omega;V)$ coincides with the classical Lebesgue space $\lebe^p(\Omega;V)$. 
	We now define the Orlicz-Sobolev space $\sobo^{k , \psi}(\Omega; V)$ for $k \in \mathbb{N}$ as
	\begin{equation*}
		\sobo^{k, \psi}(\Omega; V) = \left\{ f \in \lebe^\psi(\Omega; V) : \mathrm{D}^\alpha f \in \lebe^\psi(\Omega; V) \text{ for all  } \alpha \text{ with } |\alpha| \leq k \right\},
	\end{equation*}
	
	\noindent
	where $\mathrm{D}^\alpha f$ denotes the weak derivative of $f$ corresponding to the multi-index $\alpha$. The norm in $\sobo^{k, \psi}(\Omega; V)$ is given by
	\begin{equation*}
		\|f\|_{\sobo^{k, \psi}(\Omega; V)} = \sum_{|\alpha| \leq k} \|\mathrm{D}^\alpha f\|_{\lebe^\psi(\Omega; V)}.
	\end{equation*}
	We define \emph{shifted $ N $-functions}, as introduced in \cite{DieningEttwein+2008+523+556, diening_relaxed_2020_1}. Consider an $ N $-function $\varphi \in \hold^1([0, \infty)) \cap \hold^2((0, \infty))$. For $ a \geq 0 $, the shifted $ N $-function $\varphi_a: [0, \infty) \to [0, \infty)$ is defined by:
	\begin{equation}
		\varphi_a(t) := \int_0^t \frac{\varphi'(a+s)}{a+s}s \,\dd{s}, \quad t \geq 0. \label{eq:shift}
	\end{equation}
	A property of the shifted $N$-function is that $\Delta_2(\varphi_a)$ and $\nabla_2(\varphi_a)$ are uniformly comparable for all $a$. More precisely, the following lemma holds.
	
	\begin{lemma}[{{cf. \cite[Remark 38]{diening_relaxed_2020_1}}}] \label{Delta:Nabla:Comp}
		For all $a \geq 0$, we have
		\[
		\Delta_2(\varphi_a) \simeq \Delta_2(\varphi)
		\]
		and
		\[
		\nabla_2(\varphi_a) \simeq \nabla_2(\varphi),
		\]
		where the underlying constants do not depend on $a$.
	\end{lemma}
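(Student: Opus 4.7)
The plan is to prove the $\Delta_2$ comparison directly from the defining integral in \eqref{eq:shift} and then obtain the $\nabla_2$ comparison via Fenchel duality, using the standard identification (up to constants independent of $a$) of $(\varphi_a)^*$ with a shift of $\varphi^*$. The key technical input for the first half is that the $\Delta_2$-condition on $\varphi$ transfers to its derivative.

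For the $\Delta_2$ part, I would first record the well-known equivalence $\varphi(t)\simeq t\varphi'(t)$ valid for any $\varphi\in\Delta_2$, which yields a doubling bound $\varphi'(2r)\leq K\varphi'(r)$ with $K$ depending only on $\Delta_2(\varphi)$. Changing variables $s=2u$ in \eqref{eq:shift},
\[
\varphi_a(2t)=4\int_0^t\frac{\varphi'(a+2u)}{a+2u}\,u\,\dif u.
\]
Because $a+2u\leq 2(a+u)$ and $a+2u\geq a+u$, the doubling for $\varphi'$ gives the pointwise estimate
\[
\frac{\varphi'(a+2u)}{a+2u}\leq K\,\frac{\varphi'(a+u)}{a+u},
\]
so that $\varphi_a(2t)\leq 4K\,\varphi_a(t)$ and hence $\Delta_2(\varphi_a)\leq 4K$ uniformly in $a$. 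The reverse inequality $\Delta_2(\varphi)\lesssim\Delta_2(\varphi_a)$ is trivial, since $\varphi_0=\varphi$ by a direct computation in \eqref{eq:shift}.

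For the $\nabla_2$ part, I would use $\nabla_2(\varphi_a)=\Delta_2((\varphi_a)^*)$ together with the identification $(\varphi_a)^*\simeq(\varphi^*)_{\varphi'(a)}$ with constants independent of $a$; this is folklore in the Orlicz-space literature and may be found in \cite{DieningEttwein+2008+523+556,diening_relaxed_2020_1}. Since $\varphi\in\nabla_2$ means $\varphi^*\in\Delta_2$, applying the first half of the lemma to $\varphi^*$ with shift parameter $\varphi'(a)$ in place of $a$ yields $\Delta_2\bigl((\varphi^*)_{\varphi'(a)}\bigr)\simeq\Delta_2(\varphi^*)=\nabla_2(\varphi)$, uniformly in $a$; transporting this back through the identification then gives $\nabla_2(\varphi_a)\simeq\nabla_2(\varphi)$.

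The main obstacle is the duality identification $(\varphi_a)^*\simeq(\varphi^*)_{\varphi'(a)}$. Verifying it from scratch requires a careful use of the Young identity $\varphi(t)+\varphi^*(\varphi'(t))=t\varphi'(t)$ at $t=a$ together with the two-sided growth estimate $\varphi_a(t)\simeq\varphi(a+t)\,t^2/(a+t)^2$ that follows from $\varphi(t)\simeq t\varphi'(t)$. Once this duality statement is granted, the $\nabla_2$-half of the lemma reduces cleanly to the $\Delta_2$-half and no fundamentally new idea is needed.
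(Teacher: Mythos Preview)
The paper does not prove this lemma; it is stated with a citation to \cite[Remark~38]{diening_relaxed_2020_1} and no argument is supplied. Your sketch therefore goes beyond what the paper does, and the route you take (direct doubling of $\varphi'$ for the $\Delta_2$-half, Fenchel duality plus the identification $(\varphi_a)^*\simeq(\varphi^*)_{\varphi'(a)}$ for the $\nabla_2$-half) is the standard one and is essentially correct.

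One small correction: your justification of the reverse inequality $\Delta_2(\varphi)\lesssim\Delta_2(\varphi_a)$ by invoking $\varphi_0=\varphi$ only addresses $a=0$, not general $a$. The correct reason this direction is trivial is that the implicit constants in $\simeq$ may depend on $\varphi$ (only not on $a$), and convexity of any $N$-function $\psi$ forces $\Delta_2(\psi)\geq 2$; hence $\Delta_2(\varphi)\leq\tfrac{1}{2}\Delta_2(\varphi)\cdot\Delta_2(\varphi_a)$ uniformly in $a$, and likewise for $\nabla_2$. The substantive content of the lemma is the uniform \emph{upper} bound, which your $\Delta_2$-argument delivers cleanly, while your $\nabla_2$-argument correctly reduces to the $\Delta_2$-half once the duality identification is granted.
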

	
	\subsection{Homogeneous differential operators}
	Our objective is to establish Theorem \ref{thm:IrvMainReg} for a broader class of homogeneous differential operators $\mathbb{A}$ defined by
	\begin{equation}
		\mathbb{A}u := \sum_{i=1}^n A_i \frac{\partial}{\partial x_i} u, \label{eq:formA}
	\end{equation}
	where $A_i: V \to W$ are linear mappings. We define the \emph{Fourier symbol} of $\mathbb{A}$ as
	\begin{align}\label{eq:Fouriersymbol1}
		\mathbb{A}[\xi] := \sum_{i=1}^n A_i \xi_i,\quad  \xi = (\xi_1, \ldots, \xi_n) \in \mathbb{R}^n,
	\end{align}
	and it is moreover customary to put 
	\begin{align*}
		a\otimes_{\mathbb{A}}b:=\mathbb{A}[b]a,\qquad a\in V,\;b\in\R^{n}. 
	\end{align*}
	We say that $\mathbb{A}$ has \emph{constant rank} if the rank of $\mathbb{A}[\xi]$ remains unchanged for all $\xi \in \mathbb{R}^n \backslash \{0\}$. If \[
	\ker(\mathbb{A}[\xi]) = \{0\}
	\]
	holds for all $\xi \in \R^n \backslash \{0\}$, we refer to $\mathbb{A}$ as an \emph{elliptic operator}. Examples of such operators include the gradient $$\nabla u = \left( \frac{\partial u}{\partial x_1}, \dots, \frac{\partial u}{\partial x_n} \right), \quad u: \R^n \to V,$$ and the symmetric gradient $$\varepsilon(u) = \frac{1}{2} \left( \nabla u + (\nabla u)^T \right),\quad u:\R^n \to \R^n.$$
	Later on, we aim to demonstrate the necessity of ellipticity to ensure that local minimizers are $\hold^{1,\alpha}$-regular. To establish this, we will then use the following result:
	\begin{lemma}[cf. {\cite[Cor. 4.3]{contiFXG}}] \label{Kernel}
		Let $\Omega \subset \mathbb{R}^n$ be open, and let $\mathbb{A}$ be an operator of constant rank that is not elliptic. Then there exists $v \in \ker{\mathbb{A}} \cap \lebe^1_{\mathrm{loc}}(\Omega;V)$ such that $v \notin \hold(\omega)$ for any open $\omega \subset \Omega$.
	\end{lemma}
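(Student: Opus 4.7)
The plan is to build an explicit plane-wave element of $\ker\mathbb{A}$ whose profile function is locally integrable but essentially unbounded on every open interval, and then transfer these properties to $\Omega$ via Fubini.

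First, I would exploit non-ellipticity: since $\mathbb{A}$ is not elliptic, there exist $\xi_{0}\in\R^{n}\setminus\{0\}$ and $a\in V\setminus\{0\}$ with $\mathbb{A}[\xi_{0}]a=0$. For any scalar function $g\in\lebe^{1}_{\locc}(\R)$, define
\begin{equation*}
v(x):=a\,g(\langle\xi_{0},x\rangle),\qquad x\in\Omega.
\end{equation*}
A direct computation in the distributional sense yields $\mathbb{A}v=\sum_{i}A_{i}a\,\xi_{0,i}\,g'(\langle\xi_{0},x\rangle)=g'(\langle\xi_{0},\cdot\rangle)\,\mathbb{A}[\xi_{0}]a=0$, so $v\in\ker\mathbb{A}$ regardless of the choice of $g$. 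Note that the constant rank hypothesis plays no role in this construction; it is simply inherited from the statement.

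Second, I would produce $g$ with the desired pathological behaviour. Fix an enumeration $\{q_{k}\}_{k\in\mathbb{N}}$ of the rationals and set
\begin{equation*}
g(t):=\sum_{k=1}^{\infty}2^{-k}\,|t-q_{k}|^{-1/2},\qquad t\in\R.
\end{equation*}
By monotone convergence, $\int_{I}g(t)\,\dif t<\infty$ for every bounded interval $I$, so $g\in\lebe^{1}_{\locc}(\R)$. On the other hand, $g(t)\to+\infty$ as $t\to q_{k}$ for every $k$, and $\{q_{k}\}$ is dense, so $\operatorname{ess\,sup}_{I}g=+\infty$ on every open interval $I$.

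Third, I would transfer these properties to $v$ on $\Omega$. Local integrability of $v$ follows from Tonelli's theorem after a change of variables in the direction $\xi_{0}/|\xi_{0}|$, using that $\Omega$ is locally bounded and $g\in\lebe^{1}_{\locc}(\R)$. For the continuity failure, let $\omega\subset\Omega$ be open and nonempty. Its image under the linear projection $x\mapsto\langle\xi_{0},x\rangle$ contains an open interval $I\subset\R$; by Fubini, the set $\{x\in\omega:g(\langle\xi_{0},x\rangle)>N\}$ has positive Lebesgue measure for every $N>0$, so $v$ is essentially unbounded on $\omega$. Since any continuous function on $\omega$ is locally bounded, no representative of $v$ can belong to $\hold(\omega)$; hence $v\notin\hold(\omega)$ for any open $\omega\subset\Omega$. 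The only mild subtlety I anticipate is the bookkeeping for the ``no representative is continuous'' interpretation of $v\notin\hold(\omega)$, which is resolved cleanly by the essential unboundedness statement above.
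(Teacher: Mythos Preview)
The paper does not supply its own proof of this lemma; it is quoted verbatim from \cite[Cor.~4.3]{contiFXG} and used as a black box in the proof of Theorem~\ref{OrliczMainThm}. There is therefore no in-paper argument to compare your proposal against.

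That said, your plane-wave construction is correct and is the standard route to this result. A couple of minor remarks: your concern about local integrability of $g(t)=\sum_{k}2^{-k}|t-q_{k}|^{-1/2}$ is easily resolved, since for any bounded interval $I=[-R,R]$ one has the uniform bound $\int_{I}|t-q_{k}|^{-1/2}\,\dif t\leq 4\sqrt{2R}$ independently of $q_{k}$, so the series converges in $\lebe^{1}(I)$ for \emph{every} enumeration of the rationals; and the verification that $\mathbb{A}v=0$ distributionally is cleanest via mollification of $g$, exactly as you indicate. Your observation that the constant rank hypothesis is not used is also accurate---only the failure of ellipticity (i.e., the existence of $\xi_{0}\neq 0$ and $a\neq 0$ with $\mathbb{A}[\xi_{0}]a=0$) enters.
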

	For our future purposes, it is moreover convenient with the minimal choice of $W$:
	\begin{bem}[Choices of $W$]\label{rem:choiceofW}
		If $\mathbb{A}$ is a differential operator of the form \eqref{eq:formA} with $V\cong \R^{N}$, then it is no loss of generality to assume that $W\hookrightarrow \R^{N\times n}$. In fact, as established in \cite[Section 3]{contiFXG}, we may suppose that $W=\mathscr{R}(\mathbb{A})$ with the essential range 
		\begin{align}\label{eq:RAdef}
			\mathscr{R}(\mathbb{A}) := \mathrm{lin}(\{\mathbb{A}[e_{i}]v_{j}\colon\;i\in\{1,...,n\},\;j\in\{1,...,N\}\})
		\end{align}
		as introduced in \cite{BreitDieningGmeineder}. The space $\mathscr{R}(\mathbb{A})$ as given in \eqref{eq:RAdef} is the smallest space in which all values $\mathbb{A}v(x)$ for $v\in\hold^{1}(\R^{n};V)$ and $x\in\R^{n}$ are contained. The necessity of considering $\mathscr{R}(\mathbb{A})$ is due to the fact that, whenever $\mathbb{A}$ is a $W$-valued differential operator, viewed as an $Z$-valued differential operator for every linear space with $W\hookrightarrow Z$. In this sense, \eqref{eq:RAdef} is the minimal space $W$ with this property. As an explicit example, we note that the symmetric gradient can be viewed as a differential operator of the form \eqref{eq:formA} for both $W=\R^{n\times n}$ and $W=\mathscr{R}(\mathbb{A})=\mathbb{R}_{\mathrm{sym}}^{n\times n}$, but only $\mathbb{R}_{\mathrm{sym}}^{n\times n}$ has the requisite minimality property. 
	\end{bem}

	Elliptic operators come with convenient properties because they provide control over the entire gradient. This is specified by the following Korn-type  inequality. 
	\begin{satz}[Korn-type inequality, $(\Delta_{2}\cap\nabla_{2})$-growth {{\cite[Prop. 4.1]{contiFXG}}}] \label{thm:korn_uni} 
		Let $\mathbb{A}$ be an elliptic differential operator of the form \eqref{eq:formA}. Moreover, let $\psi\in\Delta_{2}\cap\nabla_{2}$. Then there exists a constant $c = c(\mathbb{A}, \Delta_2(\psi), \nabla_2(\psi)) > 0$ such that 
		\begin{align}\label{eq:korngun}
			\int_{\R^n} \psi(|\mathrm{D}u|) \, \dd{x} \leq c \int_{\R^n} \psi(|\mathbb{A}u|) \, \dd{x}
		\end{align}
		holds for all $u \in \hold_c^\infty(\R^n; V)$.
	\end{satz}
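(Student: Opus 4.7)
The plan is to prove the inequality by Fourier-analytic means, representing $\mathrm{D}u$ as a Calderón-Zygmund type singular integral of $\mathbb{A}u$ with a smooth $0$-homogeneous symbol, and then invoking the Orlicz version of the Mikhlin multiplier theorem. The ellipticity hypothesis is what makes the symbol well-defined and smooth away from the origin, while the $\Delta_2 \cap \nabla_2$ hypothesis is exactly what allows the Mikhlin theorem to pass from $\mathrm{L}^2$ to $\mathrm{L}^\psi$.

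First, I would exploit the ellipticity of $\mathbb{A}$ to construct, for each $\xi \in \R^n \setminus \{0\}$, a smooth left-inverse of the Fourier symbol. Since $\mathbb{A}[\xi]\colon V\to W$ is injective and $1$-homogeneous in $\xi$, the rank of $\mathbb{A}[\xi]$ equals $\dim V$ for all $\xi\neq 0$, and the orthogonal projection $\Pi(\xi)$ of $W$ onto the range of $\mathbb{A}[\xi]$ is smooth and $0$-homogeneous on $\R^n\setminus\{0\}$. Hence the Moore-Penrose pseudoinverse $\mathbb{A}[\xi]^{\dagger} := (\mathbb{A}[\xi]^{\ast}\mathbb{A}[\xi])^{-1}\mathbb{A}[\xi]^{\ast}$ is a smooth, $(-1)$-homogeneous map $W\to V$ on $\R^n\setminus\{0\}$ which satisfies $\mathbb{A}[\xi]^{\dagger}\mathbb{A}[\xi] = \mathrm{id}_V$. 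Taking Fourier transforms of the identities $\widehat{\mathrm{D}u}(\xi)=\mathrm{i}\,\xi\otimes\hat u(\xi)$ and $\widehat{\mathbb{A}u}(\xi)=\mathrm{i}\,\mathbb{A}[\xi]\hat u(\xi)$, one then obtains the pointwise relation
\[
\widehat{\mathrm{D}u}(\xi) \;=\; M(\xi)\widehat{\mathbb{A}u}(\xi),\qquad M(\xi) := \xi\otimes \mathbb{A}[\xi]^{\dagger},
\]
where $M$ is smooth and $0$-homogeneous on $\R^n\setminus\{0\}$, with values in $\mathrm{Hom}(W,V\otimes\R^n)$.

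Secondly, I would verify that $M$ satisfies the Mikhlin-Hörmander condition $|\partial^{\alpha}M(\xi)|\lesssim |\xi|^{-|\alpha|}$ for all multi-indices $\alpha$ up to a sufficient order, which is immediate from the $0$-homogeneity and smoothness of $M$ away from the origin. The Fourier multiplier operator $T_M$ associated with $M$ is thus a standard Calderón-Zygmund operator. For $\psi\in\Delta_2\cap\nabla_2$, such operators extend as bounded linear operators on the vector-valued Orlicz space $\lebe^{\psi}(\R^n;W)$; this is a classical consequence of Rubio de Francia's extrapolation together with the fact that the $\Delta_2\cap\nabla_2$-condition forces $\psi$ to be comparable to a Young function whose associated Hardy-Littlewood maximal operator is bounded on $\lebe^{\psi}$ and on the dual. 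Consequently,
\[
\|\mathrm{D}u\|_{\lebe^{\psi}(\R^n)} = \|T_M(\mathbb{A}u)\|_{\lebe^{\psi}(\R^n)} \leq c\,\|\mathbb{A}u\|_{\lebe^{\psi}(\R^n)},
\]
with $c = c(\mathbb{A},\Delta_2(\psi),\nabla_2(\psi))$.

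Finally, I would upgrade this norm estimate to the modular estimate \eqref{eq:korngun} by a scaling/homogeneity argument that is standard for Orlicz spaces in $\Delta_2$: the modular $\rho_\psi(f):=\int\psi(|f|)\dif x$ and the Luxemburg norm are comparable in the sense that $\rho_\psi(f)\leq 1$ iff $\|f\|_{\lebe^\psi}\leq 1$, and for $\psi\in\Delta_2$ one has $\rho_\psi(\lambda f)\lesssim \lambda^{s}\rho_\psi(f)$ for a suitable power $s$ depending on $\Delta_2(\psi)$; normalizing $u$ so that the right-hand side of \eqref{eq:korngun} equals $1$ and applying the norm estimate together with this comparison yields the claim. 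The main obstacle is the second step, namely the boundedness of $T_M$ on $\lebe^{\psi}$; once this is known, the remaining arguments are routine, and indeed the role of the hypothesis $\psi\in\Delta_2\cap\nabla_2$ is precisely to make Calderón-Zygmund theory available in the Orlicz setting.
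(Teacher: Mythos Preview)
Your overall strategy matches what the paper indicates: the paper does not give a proof but points to \cite{contiFXG} and to Theorem~\ref{thm:avg}, i.e.\ exactly the route ``ellipticity $\Rightarrow$ smooth $0$-homogeneous symbol $\Rightarrow$ Calder\'on--Zygmund operator $\Rightarrow$ Orlicz bound''. Steps~1--3 of your proposal are correct and constitute the heart of the argument.

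The only genuine gap is Step~4. The passage from the Luxemburg-norm inequality $\|\mathrm{D}u\|_{\lebe^{\psi}}\le c\|\mathbb{A}u\|_{\lebe^{\psi}}$ to the \emph{modular} inequality \eqref{eq:korngun} cannot be achieved by the scaling argument you sketch. Normalising so that $\int_{\R^{n}}\psi(|\mathbb{A}u|)\dif x=1$ and using $\Delta_{2}$ indeed gives $\int_{\R^{n}}\psi(|\mathrm{D}u|)\dif x\le C$, but this does \emph{not} imply \eqref{eq:korngun} for general $u$: the modular is not homogeneous, and under $\Delta_{2}\cap\nabla_{2}$ one only has $\|f\|_{\lebe^{\psi}}^{q}\lesssim\int\psi(|f|)\dif x\lesssim\|f\|_{\lebe^{\psi}}^{p}$ (with $1<p<q$ and the exponents swapped for small norm), so the ratio of modulars is only controlled by a power of $\|\mathbb{A}u\|_{\lebe^{\psi}}$, which is unbounded. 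In particular, norm boundedness of a linear operator on $\lebe^{\psi}$ does not in general force modular boundedness. The fix is simply to delete Step~4 and strengthen Step~3: for $\psi\in\Delta_{2}\cap\nabla_{2}$ the Calder\'on--Zygmund theory in Orlicz spaces (via the pointwise sharp-maximal estimate $M^{\#}(T_{M}f)\lesssim Mf$ together with the Fefferman--Stein inequality for $\psi\in\Delta_{2}\cap\nabla_{2}$, or equivalently via Rubio de Francia extrapolation from weighted $\lebe^{p}$) yields the \emph{modular} estimate $\int_{\R^{n}}\psi(|T_{M}f|)\dif x\le c\int_{\R^{n}}\psi(|f|)\dif x$ directly, with $c=c(\mathbb{A},\Delta_{2}(\psi),\nabla_{2}(\psi))$. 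This is precisely the classical result one should quote, and it makes the detour through the norm inequality unnecessary.
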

	This result can be established by use of Theorem \ref{thm:avg} below, see \cite{contiFXG}. Moreover, we note that the constant $c > 0$ can be chosen to depend monotonically on $\Delta_2(\psi) + \nabla_2(\psi)$, which will be crucial for our subsequent applications. Moreover, we note that \eqref{eq:korngun} persists for domains $\Omega\subset\R^{n}$ and competitors $u\in\hold_{c}^{\infty}(\Omega;V)$ (by extending $\hold_{c}^{\infty}(\Omega;V)$-maps to $\R^{n}$ by zero); for related Korn-type inequalities on Orlicz spaces without zero boundary values, the reader is e.g. referred to \cite{breit_sharp_2012,DieningGmeineder,diening_decomposition_2010}.
	\subsection{Calderón-Zygmund operators} We now collect some basic assertions from Calderón-Zygmund theory, cf. \cite{duoandikoetxea_fourier_2001}.
	A Fourier multiplier operator $T_m : \lebe^2(\mathbb{R}^n; \mathbb{C}) \to \lebe^2(\mathbb{R}^n; \mathbb{C})$ is defined for $m \in \lebe^\infty(\mathbb{R}^n; \mathbb{C})$ by
	\[
	T_m f := \mathscr{F}^{-1} [ m \hat{f} ].
	\]
	A Fourier multiplier operator $T_m$ is a kernel operator if there exists a $k \in \lebe^1_{\text{loc}}(\mathbb{R}^n \setminus \{0\}; \mathbb{C})$ such that
	\[
	T_m(f)(x) = (k \ast f)(x)
	\]
	holds for all $x \not \in \operatorname{supp}(f)$.
	Moreover, a kernel operator $T_m$ satisfies the Hörmander condition if the map
	\[
	\mathbb{R}^n \ni y \mapsto \int \limits_{\{x \in \mathbb{R}^n : |x| > 2|y|\}} |k(x-y) - k(x)| \, \dd{x}
	\]
	is uniformly bounded. We then have the following result:
	\begin{satz}[Calderón-Zygmund, cf. \cite{calderon_existence_1952}] \label{thm:Calderon}
		Let $m \in \mathrm{L}^\infty(\R^n;\mathbb{C})$ and let $T_m$ be a kernel operator that satisfies the H\"ormander condition. Then $T_m$ extends to a bounded linear operator $T_m : \lebe^p(\mathbb{R}^n ;\mathbb{C}) \to \lebe^p(\mathbb{R}^n ;\mathbb{C})$ for $1 < p < \infty$.
	\end{satz}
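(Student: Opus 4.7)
The plan is to follow the classical Calderón--Zygmund strategy: establish $\lebe^{2}$-boundedness directly from Plancherel, then prove a weak-type $(1,1)$ estimate using a Calderón--Zygmund decomposition together with the Hörmander condition, and finally interpolate and dualize to cover the full range $1<p<\infty$.

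First, the $\lebe^{2}$ bound is immediate: by the Plancherel identity and the assumption $m\in\lebe^{\infty}(\R^{n};\mathbb{C})$, one has for all $f\in\lebe^{2}(\R^{n};\mathbb{C})$ that
\begin{equation*}
\|T_{m}f\|_{\lebe^{2}}=\|m\widehat{f}\|_{\lebe^{2}}\leq\|m\|_{\lebe^{\infty}}\|f\|_{\lebe^{2}}.
\end{equation*}
This places $T_{m}$ among the operators to which the Calderón--Zygmund machinery applies.

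Next, to obtain a weak-type $(1,1)$ inequality for $f\in\lebe^{1}\cap\lebe^{2}$, I would fix $\lambda>0$ and perform the Calderón--Zygmund decomposition of $|f|$ at height $\lambda$, yielding $f=g+b$ with $b=\sum_{j}b_{j}$, where each $b_{j}$ is supported in a dyadic cube $Q_{j}$ with disjoint interiors, $\int b_{j}\,\dif x=0$, $\|b_{j}\|_{\lebe^{1}}\lesssim\lambda|Q_{j}|$, $\|g\|_{\lebe^{\infty}}\lesssim\lambda$, and $\sum_{j}|Q_{j}|\lesssim\lambda^{-1}\|f\|_{\lebe^{1}}$. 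For the good part $g$, Chebyshev combined with the $\lebe^{2}$-estimate gives
\begin{equation*}
\mathscr{L}^{n}(\{|T_{m}g|>\lambda/2\})\lesssim\lambda^{-2}\|g\|_{\lebe^{2}}^{2}\lesssim\lambda^{-1}\|f\|_{\lebe^{1}}.
\end{equation*}
For the bad part, let $Q_{j}^{*}$ denote the cube concentric with $Q_{j}$ but doubled in sidelength; outside $\bigcup_{j}Q_{j}^{*}$, using the kernel representation and the cancellation $\int b_{j}\,\dif y=0$, we estimate
\begin{equation*}
\int_{\R^{n}\setminus\bigcup_{j}Q_{j}^{*}}|T_{m}b_{j}(x)|\,\dif x\leq\int_{Q_{j}}|b_{j}(y)|\int_{\R^{n}\setminus Q_{j}^{*}}|k(x-y)-k(x-y_{j})|\,\dif x\,\dif y,
\end{equation*}
where $y_{j}$ is the center of $Q_{j}$. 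The inner integral is uniformly bounded by the Hörmander condition applied after the translation $x\mapsto x-y_{j}$, which is precisely the purpose of that hypothesis. Summing over $j$ yields $\|T_{m}b\|_{\lebe^{1}(\R^{n}\setminus\bigcup Q_{j}^{*})}\lesssim\|f\|_{\lebe^{1}}$, and combining with $|\bigcup_{j}Q_{j}^{*}|\lesssim\lambda^{-1}\|f\|_{\lebe^{1}}$ delivers the desired weak $(1,1)$ bound.

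Finally, the Marcinkiewicz interpolation theorem between the weak-type $(1,1)$ bound and the strong $(2,2)$ bound yields the strong $(p,p)$ estimate for $1<p\leq 2$. For $2\leq p<\infty$, I would invoke duality: the formal adjoint of $T_{m}$ is the Fourier multiplier operator associated with $\overline{m}\in\lebe^{\infty}$, with kernel $\widetilde{k}(x):=\overline{k(-x)}$, which plainly inherits the Hörmander condition. Applying the case $1<p'\leq 2$ to the adjoint and taking duals extends the bound to $2\leq p<\infty$. A standard density argument passes the inequality from $\lebe^{1}\cap\lebe^{p}$ to all of $\lebe^{p}$. The main obstacle is the delicate handling of the bad part: the cancellation $\int b_{j}=0$ must be coupled precisely with the Hörmander smoothness of $k$ to control the off-diagonal contribution, and all other steps are more routine.
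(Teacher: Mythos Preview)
Your proposal is correct and reproduces the standard Calder\'on--Zygmund argument (Plancherel for $p=2$, CZ-decomposition plus H\"ormander for weak-$(1,1)$, Marcinkiewicz interpolation, and duality). Note, however, that the paper does not actually prove this statement: it is recorded as a classical result with a reference to \cite{calderon_existence_1952} (and implicitly \cite{duoandikoetxea_fourier_2001}), so there is no ``paper's proof'' to compare against.
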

	If $k$ is such that an $m$ exists with $T_m(f)(x) = (k \ast f)(x)$ for all $ x\not \in \operatorname{supp}(f)$ and this $m$ satisfies the conditions of Theorem \ref{thm:Calderon}, then we say that $m$ is a Calderón-Zygmund multiplier. For later applications, we also record the following theorem:  
	\begin{satz} [{{\cite[Theorem 1]{bagby_rearranged_1986_2}}}]
		\label{thm:bagby}
		Let $T_m$ be as in Theorem \ref{thm:Calderon} and let $f \in \lebe^p(\R^n; \mathbb{C})$ for some $p>1$. Then there is a $C>0$, such that \[
		(T_mf)^*(t) \leq C \left( \frac{1}{t} \int \limits_0^t f^*(s) \dd{s} + \int \limits_t^\infty f^*(s) \frac{\dd{s}}{s} \right),
		\]
		where for a measurable function $u$ $$u^*(s) := \sup \{ t \geq 0: |\{x \in \Omega: |u(x)| > t\}| > s \}$$ denotes the \emph{decreasing rearrangement} of $u$ for $s > 0$. 
	\end{satz}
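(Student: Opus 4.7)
The plan is to establish this $O'$Neil-type rearrangement estimate by combining the weak-$(1,1)$ boundedness of $T_m$ (which follows from the Calderón--Zygmund decomposition together with the Hörmander condition on the kernel $k$; cf. \cite{duoandikoetxea_fourier_2001}) with a careful level-set decomposition of $f$ at height $\alpha := f^*(t)$.

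I would begin by fixing $t > 0$ and writing $f = f_1 + f_2$, where $f_1 := f\cdot \chi_{\{|f| > \alpha\}}$ and $f_2 := f - f_1$. For the large part, a direct computation using the layer-cake representation yields $\|f_1\|_1 \leq \int_0^t f^*(s)\,\mathrm{d}s$. The weak-$(1,1)$ estimate
$$\bigl|\{x : |T_m f_1(x)| > \lambda\}\bigr| \leq C\lambda^{-1}\|f_1\|_1,$$
evaluated at the appropriate distributional level, then gives
$$(T_m f_1)^*(t/2) \leq \frac{C}{t}\|f_1\|_1 \leq \frac{C}{t}\int_0^t f^*(s)\,\mathrm{d}s,$$
accounting for the first term on the right-hand side of the claim.

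For the bounded part $f_2$, since $T_m$ is not $L^\infty$-bounded, the $L^\infty$-norm of $f_2$ cannot be used directly. Instead, I would decompose $f_2$ dyadically into level-set shells $h_k := f\cdot\chi_{\{f^*(2^{k+1}t) < |f| \leq f^*(2^k t)\}}$ for $k \geq 0$, noting that $h_k$ is supported on a set of measure at most $2^{k+1}t$ and is pointwise bounded by $f^*(2^k t)$. Applying an $L^{p_k}$-bound of $T_m$ to each $h_k$ with an exponent $p_k$ chosen to balance support measure against $L^\infty$-height, evaluating the Chebyshev estimate $(T_m h_k)^*(\tau_k) \leq (1/\tau_k)^{1/p_k}\|T_m h_k\|_{p_k}$ at truncation levels $\tau_k$ summing to $t/2$, and finally recognising the resulting geometric series as a dyadic Riemann sum yields
$$(T_m f_2)^*(t/2) \leq C\sum_{k\geq 0} f^*(2^k t) \simeq C\int_t^\infty f^*(s)\frac{\mathrm{d}s}{s}.$$
The estimate $(T_m f)^*(t) \leq (T_m f_1)^*(t/2) + (T_m f_2)^*(t/2)$ then completes the argument.

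The main obstacle lies in the bookkeeping of the dyadic step: one must choose the exponents $p_k$ and the levels $\tau_k$ so that the $L^{p_k}$-constants of $T_m$ (which are only locally bounded in $p$) do not accumulate across scales, and so that the telescoping sum reproduces exactly the tail $\int_t^\infty f^*(s)/s\,\mathrm{d}s$ rather than a weaker quantity. A conceptually cleaner alternative, closer to Bagby's original reasoning, is to apply a single Calderón--Zygmund decomposition of $f$ at height $\alpha = f^*(t)$ and exploit the zero-mean property of the bad parts through the Hörmander condition; the kernel decay then produces the weight $1/s$ in the tail integral directly, bypassing the interpolation step at the cost of a more delicate kernel estimate outside the enlarged bad cubes.
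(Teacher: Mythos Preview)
The paper does not prove this theorem at all: it is merely \emph{recorded} with a citation to \cite{bagby_rearranged_1986_2} and used later as a black box in the proof of Theorem~\ref{lem:cianchi_ver}. There is therefore no ``paper's own proof'' to compare against.

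That said, a brief comment on your sketch. The treatment of the large part $f_1$ via the weak-$(1,1)$ bound is standard and correct. The dyadic scheme you propose for $f_2$, however, has a genuine obstruction that you flag but do not overcome: with $\tau_k$ summing to $t/2$ (hence $\tau_k \lesssim 2^{-k}t$) and $|\mathrm{supp}\,h_k| \lesssim 2^k t$, the Chebyshev estimate forces a factor $(2^{k}t/\tau_k)^{1/p_k} \gtrsim 2^{2k/p_k}$ into each term. To kill this you would need $p_k \to \infty$, but the Calder\'on--Zygmund $L^{p}$-constant grows like $p$ as $p\to\infty$, so the product $p_k \cdot 2^{2k/p_k}$ diverges for every reasonable choice of $p_k$. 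In other words, the varying-exponent interpolation idea does not close, and your acknowledged ``main obstacle'' is in fact fatal to that branch of the argument. Your alternative---a single Calder\'on--Zygmund decomposition at height $f^*(t)$, with the bad part handled through the H\"ormander condition---is the viable route and is essentially how such rearrangement inequalities are established (Bagby--Kurtz themselves proceed via a rearranged good-$\lambda$ inequality comparing $T_m f$ with the maximal function, which is another incarnation of the same mechanism).
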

	Finally, we record how Fourier multiplier operators can be expressed as Cauchy principal value singular integrals: 
	\begin{satz}[{{\cite[Theorem 4.13]{duoandikoetxea_fourier_2001}}}] \label{thm:avg}
		Let $m \in \hold^\infty(\R^n \backslash \{0\};\mathbb{C})$ be homogeneous of degree $0$. Then there exists $a \in \mathbb{C}$ and a function $\Omega \in \hold^\infty(\mathbb{S}^{n-1};\mathbb{C})$ with zero average such that   \[
		T_m f = af + \mathrm{p.v.} \frac{\Omega(x')}{|x|^n} \ast f\qquad\text{for any}\;f\in\mathscr{S}(\R^{n}), 
		\]
		where $x':=\frac{x}{|x|}$. Here, $\mathrm{p.v.}$ denotes the Cauchy principal value as usual. 
	\end{satz}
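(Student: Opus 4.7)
The plan is to decompose $m$ into its spherical mean and a zero-mean correction, then identify the inverse Fourier transform of each piece via homogeneity considerations. Set $a := \frac{1}{|\mathbb{S}^{n-1}|}\int_{\mathbb{S}^{n-1}} m \, \mathrm{d}\sigma$ and $m_0 := m - a$, so that $m_0$ is smooth on $\mathbb{R}^n\setminus\{0\}$, homogeneous of degree zero, and has vanishing mean on $\mathbb{S}^{n-1}$. The constant part contributes $T_a f = af$ immediately from the definition of a Fourier multiplier, so the theorem reduces to showing that $T_{m_0} f = \mathrm{p.v.}\,(\Omega(x')/|x|^n) * f$ for a suitable zero-mean $\Omega \in \mathrm{C}^\infty(\mathbb{S}^{n-1};\mathbb{C})$.

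First I would interpret $m_0$ as a tempered distribution on $\mathbb{R}^n$ and study $K := \mathscr{F}^{-1} m_0$. Because $m_0$ is bounded and homogeneous of degree zero, the scaling law $\widehat{f(\lambda\,\cdot)}(\xi) = \lambda^{-n}\widehat{f}(\xi/\lambda)$ for tempered distributions forces $K$ to be homogeneous of degree $-n$ on $\mathscr{S}'(\mathbb{R}^n)$. Away from the origin, smoothness of $m_0$ together with repeated integration-by-parts on a smooth dyadic decomposition of the frequency domain shows that $K$ is in fact represented by a $\mathrm{C}^\infty$-function; homogeneity of degree $-n$ then yields a representation $K(x) = \Omega(x')/|x|^n$ on $\mathbb{R}^n\setminus\{0\}$ for some $\Omega \in \mathrm{C}^\infty(\mathbb{S}^{n-1};\mathbb{C})$. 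To verify that $\Omega$ has vanishing spherical mean, I would pair $K$ against a radial Schwartz test function $\phi$: by Fourier duality $\langle K,\phi\rangle = \langle m_0, \mathscr{F}^{-1}\phi\rangle$, and since $\mathscr{F}^{-1}\phi$ is also radial while $m_0$ has zero mean on $\mathbb{S}^{n-1}$, polar coordinates force this pairing to vanish; carrying this out for a suitable family of radial $\phi$ and reading it through the $\Omega(x')/|x|^n$ representation pins down $\int_{\mathbb{S}^{n-1}} \Omega \, \mathrm{d}\sigma = 0$. In particular, $\mathrm{p.v.}\,\Omega(x')/|x|^n$ is a bona fide tempered distribution.

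The main obstacle is to upgrade the pointwise identity $K(x) = \Omega(x')/|x|^n$ on $\mathbb{R}^n \setminus \{0\}$ to a distributional identity on all of $\mathbb{R}^n$. The difference $K - \mathrm{p.v.}\,\Omega(x')/|x|^n$ is a tempered distribution supported at $\{0\}$; by the standard structure theorem it is a finite linear combination of derivatives of $\delta_0$, and the constraint of being homogeneous of degree $-n$ whittles this down to a scalar multiple $c\delta_0$. Testing against a radial Schwartz function with $\phi(0)\neq 0$ then forces $c=0$, because the right-hand side principal value annihilates radial test functions by the zero-mean property of $\Omega$, and the left-hand side was already shown to pair trivially with radial functions. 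This yields $K = \mathrm{p.v.}\,\Omega(x')/|x|^n$ in $\mathscr{S}'(\mathbb{R}^n)$, and finally $T_m f = T_a f + T_{m_0} f = af + K * f$ for $f \in \mathscr{S}(\mathbb{R}^n)$ delivers the claim. The delicate point throughout is keeping track of what the convolution of a Schwartz function with a principal-value distribution actually means pointwise; I would handle this by approximating $\mathrm{p.v.}\,\Omega(x')/|x|^n$ by its truncations $\Omega(x')\mathbf{1}_{|x|>\varepsilon}/|x|^n$, using the zero-mean property of $\Omega$ to pass to the limit as $\varepsilon\to 0$.
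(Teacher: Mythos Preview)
The paper does not prove this statement; it is quoted as a background result from \textsc{Duoandikoetxea} \cite[Theorem 4.13]{duoandikoetxea_fourier_2001} and used as a black box in Section~\ref{sec:LlogL}. There is therefore no proof in the paper to compare against.

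Your argument is the standard one and is essentially correct: split off the spherical average of $m$, identify the inverse Fourier transform of the zero-mean remainder as a distribution homogeneous of degree $-n$ and smooth on $\R^{n}\setminus\{0\}$, verify the zero-mean condition on $\Omega$ via radial test functions, and then use the structure theorem for distributions supported at the origin together with homogeneity to rule out any $\delta_{0}$-contribution. One point that deserves a little more care is the claim that $K=\mathscr{F}^{-1}m_{0}$ is smooth away from the origin: ``repeated integration by parts on a dyadic decomposition'' is the right idea, but you should be explicit that the derivatives $\partial^{\alpha}m_{0}$ are homogeneous of degree $-|\alpha|$ and hence satisfy the decay needed to make the oscillatory integrals converge absolutely after enough differentiations in $x$. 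With that detail filled in, the proof goes through.
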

	\section{On $\varphi$-strong $\mathbb{A}$-quasiconvexity} \label{sec:quasiconvex}
	In this section, we discuss the $\varphi$-strong $\mathbb{A}$-quasiconvexity assumption as it appears in Theorem \ref{OrliczMainThm}, see \ref{item:F3}, and give a convenient reformulation. Indeed, the latter clarifies the meaning of $\varphi$-strong $\mathbb{A}$-quasiconvexity as a quasiconvex variant of $\varphi$-strong convexity. Moreover, we give connections of this notion to the coercivity of variational integrals \eqref{eq:functionalform} on suitable Dirichlet classes, which have been established first in the gradient case by \textsc{Chen \& Kristensen} \cite{chen_coercive}. 
	\subsection{Coercivity and existence of minimizers}
	Let $\mathbb{A}$ be an elliptic differential operator of the form \eqref{eq:diffop}, and let $\varphi$ be a Young function. For an open and bounded subset $\Omega\subset\R^{n}$, we define \[
	\sobo^{\mathbb{A}, \varphi}(\Omega) := \{u \in \lebe^\varphi(\Omega;V): \mathbb{A}u \in \lebe^\varphi(\Omega;W)\}, 
	\]
	which is endowed with the norm \[
	\norm{u}_{\sobo^{\mathbb{A}, \varphi}(\Omega) } := \norm{u}_{\lebe^\varphi(\Omega)} + \norm{\mathbb{A}u}_{\lebe^\varphi(\Omega)}. 
	\]
	We then define $\sobo^{\mathbb{A}, \varphi}_0(\Omega)$ as the closure of $\hold_c^\infty(\Omega;V)$ with respect to $\norm{\cdot}_{\sobo^{\mathbb{A}, \varphi}(\Omega;V) } $.

	For $u_{0}\in\sobo^{\mathbb{A},\varphi}(\Omega)$, we then introduce the corresponding Dirichlet class 
	\begin{equation}\label{eq:DirClass}
		\sobo_{u_{0}}^{\mathbb{A},\varphi}(\Omega):=u_{0}+\sobo_{0}^{\mathbb{A},\varphi}(\Omega). 
	\end{equation}
	Based on Remark \ref{rem:choiceofW}, it is no loss of generality to assume that $W=\mathscr{R}(\mathbb{A})\subset V\times\R^{n}$, and we will do so in the sequel. Our main objective in this section is to establish the coercivity and the existence of minimizers of the functional $\mathcal{F}$ on the Dirichlet classes \eqref{eq:DirClass}. To this end, we need two auxiliary results, and first collect an inequality of Poincar\'{e}-type as follows: 
	\begin{lemma}[of Poincar\'{e}-type]\label{lem:poincareinequality}
		Let $\Omega\subset\R^{n}$ be open and bounded. Moreover, let $\varphi\in\Delta_{2}$ and suppose that $\mathbb{A}$ is an elliptic first order differential operator of the form \eqref{eq:formA}. Then there exists a constant $c=c(\Omega,\mathbb{A},\Delta_{2}(\varphi))>0$ such that 
		\begin{align}\label{eq:PoincareMain}
			\int_{\Omega}\varphi(|\zeta|)\dif x \leq c\int_{\Omega}\varphi(|\mathbb{A}\zeta|)\dif x \qquad\text{for all}\;\zeta\in\sobo_{0}^{\mathbb{A},\varphi}(\Omega). 
		\end{align}
	\end{lemma}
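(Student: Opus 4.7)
\emph{Plan.} The definition of $\sobo_0^{\mathbb{A},\varphi}(\Omega)$ as the $\|\cdot\|_{\sobo^{\mathbb{A},\varphi}(\Omega)}$-closure of $\hold_c^\infty(\Omega;V)$, combined with the $\Delta_2$-condition (which renders norm and modular convergence in $\lebe^\varphi$ equivalent), reduces the problem to proving \eqref{eq:PoincareMain} for $\zeta\in\hold_c^\infty(\Omega;V)$. Extend such $\zeta$ by zero to $\R^n$; the idea is then to represent $\zeta$ as a convolution of $\mathbb{A}\zeta$ against a fundamental solution of $\mathbb{A}$ and to combine Jensen's inequality with the $\Delta_2$-condition, exploiting the boundedness of $\Omega$ to truncate the (otherwise singular) kernel to an $L^1$-function.

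Since $\mathbb{A}$ is elliptic, $\mathbb{A}[\xi]^{\ast}\mathbb{A}[\xi]\colon V\to V$ is invertible for every $\xi\in\R^n\setminus\{0\}$, and the $\mathrm{Lin}(W,V)$-valued Fourier symbol
\[
\hat K(\xi) := -\mathrm{i}\,\bigl(\mathbb{A}[\xi]^{\ast}\mathbb{A}[\xi]\bigr)^{-1}\mathbb{A}[\xi]^{\ast}
\]
is smooth and homogeneous of degree $-1$ off the origin, and satisfies $\hat K(\xi)\cdot \mathrm{i}\,\mathbb{A}[\xi]=\mathrm{Id}_V$ by construction. Calder\'on-Zygmund theory (cf.\ Theorem \ref{thm:avg}) then shows that the associated tempered distribution $K$ is smooth on $\R^n\setminus\{0\}$ and homogeneous of degree $-(n-1)$ there; in particular $K$ is locally integrable, and the identity $\zeta=K\ast\mathbb{A}\zeta$ holds pointwise for every $\zeta\in\hold_c^\infty(\R^n;V)$.

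Choose $R>0$ with $\Omega-\Omega\subset\ball_R(0)$ and set $\tilde K(z):=|K(z)|\,\mathbf{1}_{\ball_R(0)}(z)$. The homogeneity of $K$ gives $\tilde K\in \lebe^1(\R^n)$; set $C:=\|\tilde K\|_{\lebe^1}$. For $x\in\Omega$ and $\zeta\in\hold_c^\infty(\Omega;V)$ extended by zero,
\[
|\zeta(x)|\leq\int_{\R^n}\tilde K(x-y)\,|\mathbb{A}\zeta(y)|\,\dif y,
\]
and Jensen's inequality (applied with the probability density $\tilde K(x-\cdot)/C$) yields
\[
\varphi\!\left(\frac{|\zeta(x)|}{C}\right) \leq \frac{1}{C}\int_{\R^n}\tilde K(x-y)\,\varphi(|\mathbb{A}\zeta(y)|)\,\dif y.
\]
An iteration of the $\Delta_2$-condition produces $c_{\varphi}=c_{\varphi}(C,\Delta_2(\varphi))$ with $\varphi(t)\leq c_{\varphi}\,\varphi(t/C)$ for all $t\geq 0$; multiplying through and then integrating over $\Omega$ (and applying Fubini on the right, bounding $\int_\Omega \tilde K(x-y)\,\dif x\leq C$) gives \eqref{eq:PoincareMain} with constant depending only on $\Omega$, $\mathbb{A}$ and $\Delta_2(\varphi)$, as claimed.

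\emph{Expected difficulty.} The only delicate point is the rigorous reading of the convolution identity $\zeta=K\ast\mathbb{A}\zeta$, since $\hat K$ is singular at $\xi=0$; however, for $\hold_c^\infty$-test functions this is classical and follows from Fourier inversion. The key observation that keeps the argument within the $\Delta_2$-regime is the truncation of $K$ to $\ball_R(0)$, which exploits the boundedness of $\Omega$ to produce an $\lebe^1$-kernel and hence sidesteps the Korn-type inequality of Theorem \ref{thm:korn_uni} (whose use would otherwise force the additional $\nabla_2$-hypothesis).
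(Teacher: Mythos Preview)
Your proof is correct and follows essentially the same route as the paper: a convolution representation $\zeta=K\ast\mathbb{A}\zeta$ via the Fourier symbol, followed by Jensen's inequality, the $\Delta_2$-condition, and Fubini. Your use of a globally truncated scalar kernel $\tilde K=|K|\mathbf{1}_{\ball_R(0)}$ with fixed $\lebe^1$-mass $C$ is a mild streamlining of the paper's argument, which instead normalizes by the $x$-dependent total variation $|\mu_x|(\Omega)$ and must separately verify $\inf_{x\in\Omega}|\mu_x|(\Omega)>0$.
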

	\begin{proof}
		The proof of this lemma is a variant of the symmetric gradient convolution-type Poincar\'{e} inequality from \cite[Prop. 5.1]{gmeineder_regularity_2020}.
		First let $\zeta\in\hold_{c}^{\infty}(\Omega;V)$, which we think to be extended to $\R^{n}$ by zero. Since $\mathbb{A}$ is elliptic, $\mathbb{A}[\xi]$ is injective for each $\xi\neq 0$. In particular, $\mathbb{A}^{*}[\xi]\mathbb{A}[\xi]$ is bijective for all $\xi\neq 0$, and so
		\begin{align*}
			\zeta = \mathscr{F}^{-1}\big((\mathbb{A}[\xi]^{*}\mathbb{A}[\xi])^{-1}\mathbb{A}^{*}[\xi]\mathscr{F}(\mathbb{A}\zeta)\big).
		\end{align*}
		The latter expression can be written as a convolution integral 
		\begin{align}\label{eq:PaulRep}
			\zeta(x) = \int_{\R^{n}}\mathfrak{K}(x-y)\mathbb{A}\zeta(y)\dif y,\qquad x\in\R^{n}, 
		\end{align}
		where $\mathfrak{K}\in\hold^{\infty}(\R^{n}\setminus\{0\};\mathrm{Lin}(W;V))$ is homogeneous of degree $(1-n)$ together with 
		\begin{align*}
			0<\inf_{x\neq y}\frac{|\mathfrak{K}(x-y)|}{|x-y|^{1-n}}\leq \sup_{x\neq y}\frac{|\mathfrak{K}(x-y)|}{|x-y|^{1-n}}<\infty.
		\end{align*}
		
		{Let $K(z) := \|\mathfrak{K}(z)\|_{\mathrm{op}}$. Since $\mathfrak{K}$ is homogeneous, $K(z) \simeq |z|^{1-n}$.  Let \[
			M(x) := \int \limits_\Omega K(x-y) \dd{y}.
			\]
			Since $\Omega$ is bounded, $M(x)$ is finite for all $x \in \Omega$, and $M$ is bounded from above and below: \begin{align*}
				0 < m' := \inf_{x\in\Omega} M(x) \leq \sup_{x\in\Omega} M(x) =: M < \infty.
			\end{align*}
			We define the probability measure $\nu_x$ on $\Omega$ by \[
			\dd \nu_x(y) := \frac{K(x-y)}{M(x)} \dd y.
			\]
			Therefore, and since $\zeta$ is supported on $\Omega$, \begin{align*}
				|\zeta(x)| & = \left| \int \limits_{\mathbb{R}^n} \mathfrak{K}(x-y) \mathbb{A} \zeta(y) \, \dd{y} \right| \leq M(x) \int \limits_\Omega  \frac{K(x-y)}{M(x)} |\mathbb{A} \zeta(y)| \dd{y} \\
				& = M(x) \int \limits_\Omega |\mathbb{A} \zeta(y) | \dd{\nu_x(y)}.
			\end{align*}
			Let $m \in \mathbb{N}$ be the smallest integer such that $M \leq 2^m$. By the $\Delta_2$-condition of $\varphi$, we obtain \begin{align*}
				\varphi(|\zeta(x)|) & \leq \varphi\left( 2^m \int_{\Omega} |\mathbb{A}\zeta(y)| \dif\nu_x(y) \right)\leq \Delta_{2}(\varphi)^{m} \cdot \varphi\left( \int_{\Omega} |\mathbb{A}\zeta(y)| \dif\nu_x(y) \right) \\
				& \leq \Delta_{2}(\varphi)^{m} \int_{\Omega} \varphi(|\mathbb{A}\zeta(y)|) \dif\nu_x(y),
			\end{align*}
			the latter by Jensen's inequality. Integrating this over $\Omega$ yields \begin{align*}
				\int_{\Omega}\varphi(|\zeta(x)|)\dif x &\leq \Delta_{2}(\varphi)^{m} \int_{\Omega} \int_{\Omega} \varphi(|\mathbb{A}\zeta(y)|) \frac{K(x-y)}{M(x)} \dd{y} \dd{x} \\
				&\leq \frac{\Delta_{2}(\varphi)^{m}}{m'} \int_{\Omega} \varphi(|\mathbb{A}\zeta(y)|) \left( \int_{\Omega} K(x-y) \dd{x} \right) \dif y \\
				& \leq \left(\frac{M\cdot \Delta_{2}(\varphi)^{m}}{m'}\right) \int_{\Omega} \varphi(|\mathbb{A}\zeta(y)|) \dif y.
			\end{align*}
		}
		From here, \eqref{eq:PoincareMain} follows by smooth approximation. 
	\end{proof}
	For the proof of Theorem \ref{lem:exis} below, we moreover require the following auxiliary ingredient; its proof is a direct adaptation of the arguments given in \cite[Lemma 5.2]{giusti2003direct}.
	\begin{lemma}[Lipschitz-type bound]\label{lem:Lipbound}
		Let $F\in\hold(\mathscr{R}(\mathbb{A}))$ be \emph{$\mathbb{A}$-rank-one convex,} meaning that 
		\begin{align*}
			t\mapsto F(z+ta\otimes_{\mathbb{A}}b),\qquad t\in[0,1] 
		\end{align*}
		is convex for all $z\in\mathscr{R}(\mathbb{A})$, $a\in V$ and $b\in\R^{n}$. If there exists an $N$-function $\varphi$ of class $\Delta_{2}$ such that 
		\begin{align*}
			|F(z)|\lesssim(1+\varphi(|z|))\qquad \text{for all}\; z\in\mathscr{R}(\mathbb{A}), 
		\end{align*}
		then we have the estimate 
		\begin{align*}
			|F(z)-F(w)| \lesssim \frac{\varphi(1+|z|+|w|)}{1+|z|+|w|}|z-w|\qquad\text{for all}\;z,w\in\mathscr{R}(\mathbb{A}). 
		\end{align*}
	\end{lemma}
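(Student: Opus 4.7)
The plan is to follow the blueprint of \cite[Thm.~5.2]{giusti2003direct} for convex integrands of $p$-growth, with two adjustments: $\mathbb{A}$-rank-one convexity only supplies one-dimensional convexity along the directions $a \otimes_{\mathbb{A}} b$, so a telescoping argument is needed to reach arbitrary $z, w \in \mathscr{R}(\mathbb{A})$; and the power $t^{p}$ must be replaced by the $N$-function $\varphi$, with $\Delta_{2}(\varphi)$ absorbing multiplicative constants throughout.

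First I would treat the case where $z - w$ is itself $\mathbb{A}$-rank-one. Setting $R := 1 + |z| + |w|$ and $f(t) := F(w + t(z-w))$, the hypothesis yields convexity of $f$ on $[0,1]$, which extends to all of $\R$ by rescaling the rank-one direction. Applying the secant-slope inequality to $f$ against the endpoints $t = \pm s$ with $s := R/|z-w| \geq 1$ yields
\[
    f(1) - f(0) \leq \frac{f(s) - f(0)}{s} \qquad \text{and} \qquad f(0) - f(1) \leq \frac{f(-s) - f(0)}{s},
\]
the second following by writing $0$ as a convex combination of $-s$ and $1$. Since $|w \pm s(z-w)| \leq 2R$, the assumed growth bound controls each $|f(\pm s)|$ by $1 + \varphi(2R) \lesssim \varphi(R)$ via $\Delta_{2}$ (using $\varphi(R) \geq \varphi(1) > 0$, as $R \geq 1$). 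This delivers the desired estimate in the rank-one case.

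For the general case, I invoke Remark \ref{rem:choiceofW}: by the very definition \eqref{eq:RAdef}, the space $\mathscr{R}(\mathbb{A})$ is spanned by $\mathbb{A}$-rank-one vectors, so I can fix once and for all a basis $\{\xi_{1}, \ldots, \xi_{K}\}$ of $\mathscr{R}(\mathbb{A})$ consisting of such elements. Expanding $z - w = \sum_{k} c_{k} \xi_{k}$ with $|c_{k}| \lesssim |z-w|$ by equivalence of norms on a finite-dimensional space, I telescope along $w_{0} := w$, $w_{k} := w_{k-1} + c_{k} \xi_{k}$, so that each increment $w_{k} - w_{k-1}$ is $\mathbb{A}$-rank-one and $|w_{k}| \lesssim R$. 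Applying the rank-one estimate to each increment and summing, I use the fact that $s \mapsto \varphi(s)/s$ is non-decreasing (a consequence of convexity of $\varphi$ with $\varphi(0)=0$), together with one more application of $\Delta_{2}$, to replace each local factor $\varphi(1 + |w_{k-1}| + |w_{k}|)/(1 + |w_{k-1}| + |w_{k}|)$ by its $R$-version.

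The main obstacle is purely one of bookkeeping: verifying that the accumulated $\Delta_{2}$-constants and the norm-equivalence constants on $\mathscr{R}(\mathbb{A})$ depend only on $\mathbb{A}$, $\varphi$ and the chosen basis, not on $z$ or $w$. No analytic input beyond $\mathbb{A}$-rank-one convexity and the $\varphi$-growth hypothesis is required.
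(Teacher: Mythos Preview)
Your proposal is correct and is precisely the ``direct adaptation of the arguments given in \cite[Thm.~5.2]{giusti2003direct}'' that the paper invokes in lieu of a written proof: the secant-slope estimate along $\mathbb{A}$-rank-one directions, followed by the telescoping over a rank-one basis of $\mathscr{R}(\mathbb{A})$, with $\Delta_{2}(\varphi)$ and the monotonicity of $t\mapsto\varphi(t)/t$ replacing the elementary manipulations for $t^{p}$. There is nothing to add.
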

	We now come to the main result of the present subsection:
	\begin{satz}[Coercivity and existence of minimizers] \label{lem:exis}
		Let $\mathbb{A}$ be an elliptic differential operator of order one, and let $\varphi\in\Delta_{2} \cap \nabla_2$ be an $N$-function; in particular, we have  \begin{align}\label{eq:lowerboundsuperlinear}
			\liminf_{t\to\infty}\frac{\varphi(t)}{t} = \infty.  
		\end{align}
		Under the conditions of Theorem \ref{OrliczMainThm} and assuming that $u_{0}\in\sobo^{\mathbb{A},\varphi}(\Omega)$, the variational integral $\mathcal{F}$ as in \eqref{eq:functionalform} is \emph{coercive} on $u_{0}+\sobo_{0}^{\mathbb{A},\varphi}(\Omega)$. In particular, for every $u_0 \in \sobo^{\mathbb{A},\varphi}(\Omega)$ there exists a minimizer $u$ of $\mathcal{F}$ over the affine Dirichlet class $u_0 + \sobo_0^{\mathbb{A}, \varphi}(\Omega)$, meaning that $u$ satisfies 
		\begin{align}\label{eq:minmain}
			\int_{\Omega}F(\mathbb{A}u)\dif x \leq \int_{\Omega}F(\mathbb{A}v)\dif x\qquad\text{for all}\;v\in u_{0}+\sobo_{0}^{\mathbb{A},\varphi}(\Omega).
		\end{align}
		In particular, the set of local minimizers is non-empty.
	\end{satz}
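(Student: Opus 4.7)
The plan is to first prove coercivity of $\mathcal{F}$ on the Dirichlet class and then deduce existence of minimizers by the direct method. The three workhorses are the $\varphi$-strong $\mathbb{A}$-quasiconvexity \ref{F3} (applied at $z_0 = 0$), the Lipschitz-type estimate of Lemma~\ref{lem:Lipbound}, and the Poincar\'{e}-type inequality of Lemma~\ref{lem:poincareinequality}.

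For coercivity I proceed in two sub-steps. First, for $\zeta \in \hold_c^\infty(\Omega;V) \subset \sobo_0^{\mathbb{A},\varphi}(\Omega)$, extended by zero to an enclosing cube $Q \supset \Omega$ and rescaled to $(0,1)^n$, \ref{F3} with $z_0 = 0$ yields
\[
\int_\Omega F(\mathbb{A}\zeta) \, \dd{x} \geq F(0) |\Omega| + \nu_0 \int_\Omega \varphi_1(|\mathbb{A}\zeta|) \, \dd{x},
\]
and $\varphi_1 \simeq \varphi$ above a fixed threshold by $\Delta_2$. Second, to pass from $\zeta$ to $u = u_0 + \zeta$, I split
\[
\int_\Omega F(\mathbb{A}u_0 + \mathbb{A}\zeta) \, \dd{x} = \int_\Omega F(\mathbb{A}\zeta) \, \dd{x} + \int_\Omega \bigl[F(\mathbb{A}u_0 + \mathbb{A}\zeta) - F(\mathbb{A}\zeta)\bigr] \, \dd{x}
\]
and estimate the correction via Lemma~\ref{lem:Lipbound}, whose integrand is dominated by $\varphi(1+|\mathbb{A}u_0|+|\mathbb{A}\zeta|)/(1+|\mathbb{A}u_0|+|\mathbb{A}\zeta|) \cdot |\mathbb{A}u_0|$. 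Splitting on the superlevel set $\{|\mathbb{A}\zeta| \leq |\mathbb{A}u_0|\}$ and its complement, and employing $\Delta_2$-manipulations together with the Young-type inequality $\varphi^*(\varepsilon y) \leq \varepsilon \varphi^*(y)$ (valid for $\varepsilon \in (0,1)$ by convexity of $\varphi^*$ alone, bypassing any need for $\nabla_2$), I bound the correction by
\[
\varepsilon\int_\Omega \varphi(|\mathbb{A}\zeta|)\, \dd{x} + C(\varepsilon)\int_\Omega \varphi(1+|\mathbb{A}u_0|)\, \dd{x}.
\]
Choosing $\varepsilon$ small enough to absorb the first term into the main bound yields
\[
\mathcal{F}[u_0 + \zeta] \geq c\int_\Omega \varphi(|\mathbb{A}\zeta|) \, \dd{x} - C(u_0, \Omega),
\]
and coupling with Lemma~\ref{lem:poincareinequality} upgrades this to coercivity in the full $\sobo^{\mathbb{A},\varphi}$-norm.

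For existence, a minimizing sequence $(u_k) \subset u_0 + \sobo_0^{\mathbb{A},\varphi}(\Omega)$ is bounded in $\sobo^{\mathbb{A},\varphi}(\Omega)$ by coercivity. The superlinearity \eqref{eq:lowerboundsuperlinear}, together with de la Vall\'{e}e Poussin, gives equi-integrability of $(u_k)$ and $(\mathbb{A}u_k)$ in $\lebe^1(\Omega)$, so after passing to a subsequence $u_k \rightharpoonup u$ and $\mathbb{A}u_k \rightharpoonup \mathbb{A}u$ weakly in $\lebe^1(\Omega)$. Lower semicontinuity of the Orlicz modular applied to $\varphi(|\cdot|)$ places the limit in $\lebe^\varphi$, and the Dirichlet class is closed under weak $\lebe^1$-convergence by convexity (Mazur). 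Finally, $\mathcal{F}[u] \leq \liminf_k \mathcal{F}[u_k]$ follows from the $\mathbb{A}$-quasiconvexity of $F$ via the \textsc{Fonseca \& M\"{u}ller} lower semicontinuity theory adapted to Orlicz growth, as alluded to in the Introduction.

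The main obstacle I anticipate is the transfer step in the coercivity argument: with only $\Delta_2$ imposed on $\varphi$ (so that $\varphi^*$ need not satisfy $\Delta_2$), the standard Young inequality in the shifted $N$-function pair $(\varphi_a, \varphi_a^*)$ does not immediately produce a small multiplicative constant. The remedy is the level-set decomposition above, which exploits the fact that $|\mathbb{A}u_0|/(1+|\mathbb{A}u_0|+|\mathbb{A}\zeta|)$ is small exactly where $|\mathbb{A}\zeta|$ is large, coupled with the elementary scaling $\varphi^*(\varepsilon y) \leq \varepsilon \varphi^*(y)$ that requires no growth condition on $\varphi^*$.
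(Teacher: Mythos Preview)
Your coercivity argument mirrors the paper's almost exactly: the same split $\mathcal{F}[u_0+\zeta] = \int_\Omega F(\mathbb{A}\zeta)\,\dd{x} + \int_\Omega[F(\mathbb{A}u_0+\mathbb{A}\zeta)-F(\mathbb{A}\zeta)]\,\dd{x}$, the quasiconvexity condition \ref{F3} at $z_0=0$ for the main term, Lemma~\ref{lem:Lipbound} for the correction, and crucially the elementary scaling $\varphi^*(\varepsilon t)\leq\varepsilon\varphi^*(t)$ to generate the absorbable $\varepsilon$-factor. The paper dispenses with your level-set split by applying Young's inequality directly to the product $\bigl(\varepsilon\,\tfrac{\varphi(1+|\mathbb{A}u_0|+|\mathbb{A}\zeta|)}{1+|\mathbb{A}u_0|+|\mathbb{A}\zeta|}\bigr)\cdot\bigl(\tfrac{1}{\varepsilon}|\mathbb{A}u_0|\bigr)$ and then invoking $\varphi^*(\varphi(s)/s)\lesssim\varphi(s)$; your route is longer but not wrong.

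For existence the approaches diverge. The paper exploits that $\varphi\in\Delta_2\cap\nabla_2$ (part of the standing hypotheses via Theorem~\ref{OrliczMainThm}) to invoke \emph{reflexivity} of $\sobo^{\mathbb{A},\varphi}(\Omega)$: a bounded minimizing sequence has a weakly convergent subsequence in $\sobo^{\mathbb{A},\varphi}$ itself, and the Dirichlet class, being convex and norm-closed, is weakly closed, so the limit stays in $u_0+\sobo_0^{\mathbb{A},\varphi}(\Omega)$ for free. Lower semicontinuity is then obtained by rewriting $\mathcal{F}$ as a full-gradient functional via $G(z)=F(\pi_{\mathbb{A}}z)$ and citing \cite[Prop.~10.9]{gmeineder_quasiconvex_2024}.

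Your route through de la Vall\'{e}e Poussin and Dunford--Pettis is in principle more elementary (it avoids reflexivity and hence the $\nabla_2$-condition), but the Mazur step has a genuine gap. Mazur only delivers that the weak $\lebe^1$-limit $u$ lies in the \emph{strong $\lebe^1\times\lebe^1$-closure} of $u_0+\sobo_0^{\mathbb{A},\varphi}(\Omega)$. Since $\sobo_0^{\mathbb{A},\varphi}(\Omega)$ is by definition the closure of $\hold_c^\infty(\Omega;V)$ in the finer $\sobo^{\mathbb{A},\varphi}$-norm, it is \emph{not} a priori $\lebe^1$-closed, and you have only established $u-u_0\in\sobo^{\mathbb{A},\varphi}(\Omega)$ together with membership in the $\sobo^{\mathbb{A},1}$-closure of $\hold_c^\infty$. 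Upgrading this to $u-u_0\in\sobo_0^{\mathbb{A},\varphi}(\Omega)$ requires an identification of the type $\sobo_0^{1,1}(\Omega;V)\cap\sobo^{\mathbb{A},\varphi}(\Omega)=\sobo_0^{\mathbb{A},\varphi}(\Omega)$ (cf.\ Lemma~\ref{lem:density}), which in turn uses boundary regularity of $\Omega$ not assumed in Theorem~\ref{OrliczMainThm}. The paper's reflexivity argument sidesteps this issue entirely.
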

	\begin{proof} We employ the direct method of the Calculus of Variations, and first establish that $\mathcal{F}$ is coercive on Dirichlet classes $u_{0}+\sobo_{0}^{\mathbb{A},\varphi}(\Omega)$. To this end, we let $\zeta\in\sobo_{0}^{\mathbb{A},\varphi}(\Omega)$ be arbitrary. We put $u:=u_{0}+\zeta$ and split 
		\begin{align}\label{eq:Paulsplit}
			\begin{split}
				\int_{\Omega}F(\mathbb{A}u)\dif x & = 
				\int_{\Omega}F(\mathbb{A}(u_{0}+\zeta))\dif x \\ & = \Big(\int_{\Omega}F(\mathbb{A}(u_{0}+\zeta))\dif x - \int_{\Omega}F(\mathbb{A}\zeta)\dif x\Big) + \int_{\Omega}F(\mathbb{A}\zeta)\dif x \\ & =: \mathrm{I} + \mathrm{II}.
			\end{split}
		\end{align}
		For term $\mathrm{I}$, we first note that the definition of the convex conjugate (see \eqref{eq:Fenchel}) gives us for all $0<\varepsilon<1$ and all $t\geq 0$: 
		\begin{align}\label{eq:phi*bound}
			\varphi^{*}(\varepsilon t) := \sup_{s>0}s(\varepsilon t) - \varphi(s) \leq \sup_{s>0}\varepsilon(st - \varphi(s)) = \varepsilon \varphi^{*}(t).
		\end{align}
		With $0<\varepsilon<1$ to be fixed later, we now employ the definition of the convex conjugate in conjunction with Lemma \ref{lem:Lipbound} to find with a constant $c>0$ that 
		\begin{align*}
			|\mathrm{I}| & \lesssim  \int_{\Omega} \Big(\varepsilon\frac{\varphi(1+|\mathbb{A}u_{0}|+|\mathbb{A}\zeta|)}{1+|\mathbb{A}u_{0}|+|\mathbb{A}\zeta|}\Big) \Big(\frac{1}{\varepsilon}|\mathbb{A}u_{0}|\Big)\dif x \\ 
			& \lesssim  \int_{\Omega} \varphi^{*}\Big(\varepsilon\frac{\varphi(1+|\mathbb{A}u_{0}|+|\mathbb{A}\zeta|)}{1+|\mathbb{A}u_{0}|+|\mathbb{A}\zeta|}\Big)\dif x + \int_{\Omega}\varphi \Big(\frac{1}{\varepsilon}|\mathbb{A}u_{0}|\Big)\dif x \\ 
			& \lesssim  \varepsilon\int_{\Omega} \varphi^{*}\Big(\frac{\varphi(1+|\mathbb{A}u_{0}|+|\mathbb{A}\zeta|)}{1+|\mathbb{A}u_{0}|+|\mathbb{A}\zeta|}\Big)\dif x + \int_{\Omega}\varphi \Big(\frac{1}{\varepsilon}|\mathbb{A}u_{0}|\Big)\dif x \\ 
			& \lesssim  \varepsilon\int_{\Omega} \varphi(1+|\mathbb{A}u_{0}|+|\mathbb{A}\zeta|)\dif x + \int_{\Omega}\varphi \Big(\frac{1}{\varepsilon}|\mathbb{A}u_{0}|\Big)\dif x =: \mathrm{I}', 
		\end{align*}
		where the constants implicit in '$\lesssim$' do not depend on $\varepsilon$, $u_{0}$ or $\zeta$. Since $\varphi$ is of class $\Delta_{2}$, we conclude that 
		\begin{align}\label{eq:ultimo}
			|\mathrm{I}| \lesssim \mathrm{I}' \leq c_{1}\Big( \varepsilon\Big(|\Omega| + \int_{\Omega}\varphi(|\mathbb{A}u_{0}|)\dif x + \int_{\Omega}\varphi(|\mathbb{A}\zeta|)\dif x \Big) + \int_{\Omega}\varphi \Big(\frac{1}{\varepsilon}|\mathbb{A}u_{0}|\Big)\dif x\Big),  
		\end{align} 
		and as above, $c_{1}>0$ does not depend on $\varepsilon$, $u_{0}$ or $\zeta$.
		
		For term $\mathrm{II}$, we first note that there exist constants $d_{1},d_{2}\geq 1$ such that 
		\begin{align*}
			\frac{1}{d_{1}}\varphi(|z|)-d_{2} \leq \varphi_{2}(|z|)\qquad\text{for all}\;z\in \mathscr{R}(\mathbb{A}). 
		\end{align*}
		Using the $\varphi$-strong $\mathbb{A}$-quasiconvexity at zero and a routine technique (which lets us pass from the definition of $\mathbb{A}$-quasiconvexity on cubes to that on general domains $\Omega$), we therefore find that 
		\begin{align}\label{eq:IIest}
			\begin{split}
				\mathrm{II} & = \int_{\Omega}F(\mathbb{A}\zeta)-F(0)\dif x + F(0)|\Omega| \stackrel{\ref{item:F3}}{\geq} \nu_{1}\int_{\Omega}\varphi_{2}(|\mathbb{A}\zeta|)\dif x + F(0)|\Omega| \\ 
				& \geq \nu_{1}\int_{\Omega}\frac{1}{d_{1}}\varphi(|\mathbb{A}\zeta|)-d_{2}\dif x + F(0)|\Omega|.
			\end{split}
		\end{align}
		At this stage, we choose $m\in\mathbb{N}$ so large such that $\varepsilon:=2^{-m}$ satisfies 
		\begin{align*}
			0<2^{-m}=:\varepsilon \stackrel{!}{<} \frac{\nu_{1}}{4\Delta_{2}(\varphi)c_{1}d_{1}}. 
		\end{align*}
		On the other hand, since $\varphi$ is of class $\Delta_{2}$, we arrive at 
		\begin{align*}
			\varphi(|\mathbb{A}\zeta|) & \leq \varphi(|\mathbb{A}(u_{0}+\zeta)| + |\mathbb{A}u_{0}|) \stackrel{\text{$\varphi$ convex}}{\leq} \frac{1}{2}\big( \varphi(2|\mathbb{A}(u_{0}+\zeta)|)+\varphi(2|\mathbb{A}u_{0}|)\big) \\ & \leq \Delta_{2}(\varphi)\big(\varphi(|\mathbb{A}(u_{0}+\zeta)|)+\varphi(|\mathbb{A}u_{0}|)\big).
		\end{align*}
		Therefore, again using the $\Delta_{2}$-assumption on $\varphi$, gives us 
		\begin{align}\label{eq:Paulsplit1}
			\begin{split}
				|\mathrm{I}| & \stackrel{\eqref{eq:ultimo}}{\leq} \frac{\nu_{1}}{4d_{1}\Delta_{2}(\varphi)}|\Omega| + \frac{\nu_{1}}{4d_{1}\Delta_{2}(\varphi)}\int_{\Omega}\varphi(|\mathbb{A}u_{0}|)\dif x +  \frac{\nu_{1}}{4d_{1}}\int_{\Omega}\varphi(|\mathbb{A}u_{0}|)\dif x \\ & + \frac{\nu_{1}}{4d_{1}}\int_{\Omega}\varphi(|\mathbb{A}u|)\dif x +c_{1}\Delta(\varphi)^{m}\int_{\Omega}\varphi(|\mathbb{A}u_{0}|)\dif x.
			\end{split}
		\end{align}
		In consequence, merging this estimate with \eqref{eq:Paulsplit} and \eqref{eq:IIest}, we arrive at 
		\begin{align*}
			\int_{\Omega}F(\mathbb{A}u)\dif x & \geq \mathrm{II} - |\mathrm{I}| \geq c(\Delta_{2}(\varphi),m,u_{0},|\Omega|,F) + \frac{3\nu_{1}}{4d_{1}}\int_{\Omega}\varphi(|\mathbb{A}u|)\dif x, 
		\end{align*}
		where $c(\Delta_{2}(\varphi),m,u_{0},|\Omega|,F)\in\R$ is independent of $\zeta$.
		
		This estimate tells us that $\mathcal{F}$ is bounded below and coercive  on $u_{0}+\sobo_{0}^{\mathbb{A},\varphi}(\Omega)$ in the sense that 
		\begin{align*}
			((u_{j})\subset u_{0}+\sobo_{0}^{\mathbb{A},\varphi}(\Omega)\;\;\;\text{satisfies}\;\;\;\limsup_{j\to\infty}\|u_{j}\|_{\sobo^{\mathbb{A},\varphi}(\Omega)}=\infty)\Rightarrow \limsup_{j\to\infty}\mathcal{F}[u_{j}]=\infty. 
		\end{align*}
		Hence, $\mathfrak{m}:=\inf_{u_{0}+\sobo_{0}^{\mathbb{A},\varphi}}\mathcal{F}\in\R$, and we find a sequence $(u_{j})\subset u_{0}+\sobo_{0}^{\mathbb{A},\varphi}(\Omega)$ such that $\mathcal{F}[u_{j}]\to\mathfrak{m}$. By the Poincar\'{e}-type inequality from Lemma \ref{lem:poincareinequality}, we infer that $(u_{j})$ is bounded in $(\sobo^{\mathbb{A},\varphi}(\Omega),\|\cdot\|_{\sobo^{\mathbb{A},\varphi}(\Omega)})$. By our choice of $\varphi\in\Delta_{2}\cap\nabla_{2}$, this space is reflexive. Hence, there exists a (not relabeled) subsequence $(u_{j})$ and an element $u\in\sobo^{\mathbb{A},\varphi}(\Omega)$ such that $u_{j}\rightharpoonup u$ in $\sobo^{\mathbb{A},\varphi}(\Omega)$. The Dirichlet class $u_{0}+\sobo_{0}^{\mathbb{A},\varphi}(\Omega)$ is convex and closed with respect to the norm topology, and so it is weakly closed. Therefore, $u\in u_{0}+\sobo_{0}^{\mathbb{A},\varphi}(\Omega)$. In conclusion, the minimality of $u$ follows if we can show that $\mathcal{F}$ is lower semicontinuous with respect to weak convergence on Dirichlet subclasses of $\sobo^{\mathbb{A},\varphi}(\Omega)$. 
		
		To this end, recall that $W=\mathscr{R}(\mathbb{A})$. We consider $G(z):=F(\pi_{\mathbb{A}}(z))$, where $\pi_{\mathbb{A}}$ is a linear map such that $\mathbb{A}v=\pi_{\mathbb{A}}(\nabla v)$ for all $v\in\hold^{\infty}(\R^{n};V)$. Since $\varphi\in\Delta_{2}\cap\nabla_{2}$, Theorem \ref{thm:korn_uni} implies that $\sup_{j\in\mathbb{N}}\|\varphi(|\nabla u_{j}|)\|_{\lebe^{1}(\Omega)}<\infty$. Since $|G(z)|\lesssim 1+\varphi(|z|)$ for all $z\in W$, \cite[Prop. 10.9]{gmeineder_quasiconvex_2024} gives us
		\begin{align*}
			\int_{\Omega}G(\nabla u)\dif x \leq \liminf_{j\to\infty}\int_{\Omega}G(\nabla u_{j})\dif x,
		\end{align*}
		and this directly yields the requisite lower semicontinuity of $\mathcal{F}$. Summarizing, we obtain 
		\begin{align*}
			\mathcal{F}[u] \leq \liminf_{j\to\infty}\mathcal{F}[u_{j}] = \mathfrak{m}, 
		\end{align*}
		and so $u$ is a minimizer. This immediately implies that $u$ is a local minimizer, and so the claim follows.
	\end{proof}
	We end this subsection with two remarks.
	\begin{bem}
		If $\varphi\in\Delta_{2}\cap\nabla_{2}$, then $\sobo_{0}^{1,\varphi}(\Omega;V)=\sobo_{0}^{\mathbb{A},\varphi}(\Omega)$. Hence, in this case, the preceding result and its proof can be directly read as a coercivity and existence theorem on the classical {Orlicz-Sobolev} spaces $\sobo^{1,\varphi}(\Omega;V)$. Note however that the stronger statement $\sobo^{\mathbb{A},\varphi}(\Omega)=\sobo^{1,\varphi}(\Omega;V)$ only holds provided $\mathbb{A}$ satisfies the stronger condition of $\mathbb{C}$-ellipticity. 
	\end{bem}
	\begin{bem}
		It is also possible to generalize Theorem \ref{lem:exis} to $\varphi(t)=t\log(1+t)$,  even though this does not follow by reduction to full gradient functionals. {This, however, can} be achieved by employing a trace-preserving operator as in \cite[Prop. 10.9]{gmeineder_quasiconvex_2024} for $\mathbb{C}$-elliptic operators, and we shall pursue this in a future work. 
	\end{bem}
	\subsection{Equivalent definitions of $\varphi$-strong quasiconvexity}
	Usually, the definition of strong $\varphi$-quasiconvexity is not given by the gradient version of \ref{item:F3}, but rather by the following condition: For each $M>0$ there exists $\nu_{M}>0$ such that 
	\begin{equation}
		\int_{(0,1)^n} \left[ F(z_0 + \nabla \zeta) - F(z_0) \right] \dd{x} \geq \nu_{M} \label{eq:q2} \int_{(0,1)^n} V_\varphi(z_0 + \nabla \zeta) - V_\varphi(z_0)
	\end{equation}
	holds for all $|z_{0}|\leq M$ and all $\zeta\in\hold_{c}^{\infty}((0,1)^{n};V)$. Here, we work with the auxiliary $V$-function
	$$V_\varphi(z) := \varphi\left(\sqrt{1+|z|^2} \right) - \varphi(1),\qquad z\in X,$$ whenever $(X,|\cdot|)$ is a normed space (see also, for example, \cite{ carozza_partial_1998, contiFXG, Irving2021partial, schmidt1, schmidt_regularity_2009}). This reformulation asserts that, for each $M>0$ and each $z_{0}\in\R^{N\times n}$ with $|z_{0}|\leq M$, there exists $\nu_{M}>0$ such that 
	\begin{align}\label{eq:reformulation1}
		z\mapsto F(z)-\nu_{M}V_{\varphi}(z)\qquad \text{is quasiconvex at $z_{0}$}. 
	\end{align}
	We will establish in Theorem \ref{lem:quasiconvex} that both definitions are equivalent. However, \eqref{eq:reformulation1} tells us that strong $\varphi$-quasiconvexity can be read as the quasiconvex analogue of strong convexity for integrands of $\varphi$-growth; the latter corresponds to requiring that, for each $M>0$ and $z_{0}\in\R^{N\times n}$ with $|z_{0}|\leq M$, there exists $\nu_{M}>0$ such that 
	\begin{align*}
		z\mapsto F(z)-\nu_{M}V_{\varphi}(z)\qquad\text{is convex at $z_{0}$}.
	\end{align*} 
	The following lemma yields the equivalence of the two aforementioned versions of strong $\varphi$-quasiconvexity: 
	\begin{satz}[Equivalence of $\varphi$-strong quasiconvexity conditions] \label{lem:quasiconvex}
		Let $M>0$ and $\varphi \in \Delta_2 \cap \nabla_2$. For all $|z_0| \leq M$ and $\zeta \in \hold_c^\infty((0,1)^n; V)$ it holds
		\[
		\int_{(0,1)^n} V_\varphi(z_0 + \nabla \zeta) - V_\varphi(z_0) \, \mathrm{d}x \simeq  \int_{(0,1)^n} \varphi_{1+|z_0|}(|\nabla \zeta|) \, \mathrm{d}x,
		\]
		where here and in what follows the underlying constants depend only on $M$ and $\varphi$. Similarly, we have that
		\[
		\int_{(0,1)^n} V_\varphi(z_0 + \mathbb{A}\zeta) - V_\varphi(z_0) \, \mathrm{d}x \simeq  \int_{(0,1)^n} \varphi_{1+|z_0|}(|\mathbb{A} \zeta|) \, \mathrm{d}x, 
		\]
		and the underlying constants again only depend on $M$ and $\varphi$.
	\end{satz}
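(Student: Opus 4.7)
The plan is to reduce both equivalences to a single pointwise identity that compares the second-order Taylor remainder of $V_\varphi$ with the shifted $N$-function $\varphi_{1+|z_0|}$, and then to exploit the vanishing of the mean of $\nabla\zeta$ and $\mathbb{A}\zeta$ on the cube to cancel the linear contribution.

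The first step is to establish the pointwise two-sided estimate
\[
V_\varphi(z_0+w) - V_\varphi(z_0) - \langle \mathrm{D}V_\varphi(z_0), w\rangle \simeq \varphi_{1+|z_0|}(|w|),
\]
valid for all $|z_0|\leq M$ and all $w$ in the relevant target space, with constants depending only on $M$ and $\varphi$. Differentiating $V_\varphi(z) = \varphi(\sqrt{1+|z|^{2}}) - \varphi(1)$ twice shows that $|\mathrm{D}^2 V_\varphi(z)| \simeq \varphi'(1+|z|)/(1+|z|)$, uniformly in $z$. Plugging this into the integral Taylor remainder
\[
V_\varphi(z_0+w) - V_\varphi(z_0) - \langle \mathrm{D}V_\varphi(z_0), w\rangle = \int_0^1 (1-t)\,\bigl\langle \mathrm{D}^2 V_\varphi(z_0+tw)\,w, w\bigr\rangle\, \mathrm{d}t,
\]
and invoking the $\Delta_2\cap\nabla_2$-calculus of shifted $N$-functions — notably the identity $\varphi_a(t) = \int_0^t \varphi'(a+s)/(a+s)\,s\,\mathrm{d}s$ together with the fact that $\varphi_a \simeq \varphi_{a'}$ whenever $a \simeq a'$ — recovers the right-hand side up to constants, with Lemma~\ref{Delta:Nabla:Comp} providing the required uniformity in the shift. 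This computation is classical; compare \cite{DieningEttwein+2008+523+556, diening_relaxed_2020_1}.

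The second step is purely bookkeeping. For $\zeta \in \mathrm{C}_c^\infty((0,1)^n;V)$, integration by parts gives $\int_{(0,1)^n}\nabla\zeta\,\mathrm{d}x = 0$ and
\[
\int_{(0,1)^n}\mathbb{A}\zeta\,\mathrm{d}x = \sum_{i=1}^n A_i\int_{(0,1)^n}\partial_i\zeta\,\mathrm{d}x = 0.
\]
Consequently, the linear term $\langle \mathrm{D}V_\varphi(z_0), \cdot\rangle$ vanishes upon integration, and applying the pointwise bound from the first step to $w = \nabla\zeta(x)$, respectively $w = \mathbb{A}\zeta(x)$, and integrating over the cube directly yields both equivalences asserted in the theorem.

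The main obstacle is securing the uniformity of the pointwise comparison in the first step. This is exactly where the hypothesis $\varphi \in \Delta_2 \cap \nabla_2$ enters: Lemma~\ref{Delta:Nabla:Comp} guarantees that the $\Delta_2$- and $\nabla_2$-constants of the shifted $N$-functions $\varphi_a$ remain bounded as $a$ ranges over $[1, 1+M]$, and it is precisely this control that forces the implicit constants in the pointwise estimate to depend only on $M$ and $\varphi$. Once this is in place, the remainder of the argument is mechanical, and the proof of both equivalences — for $\nabla\zeta$ and for $\mathbb{A}\zeta$ — becomes literally identical.
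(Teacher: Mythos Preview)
Your approach is correct but takes a different route from the paper. You reduce everything to a single pointwise estimate --- the second-order Taylor remainder of $V_\varphi$ at $z_0$ is $\simeq\varphi_{1+|z_0|}(|w|)$ --- which follows from the two-sided Hessian bound $\langle\mathrm{D}^2 V_\varphi(z)\,\xi,\xi\rangle\simeq \varphi'(1+|z|)(1+|z|)^{-1}|\xi|^{2}$ (this is where $\varphi''(s)s\simeq\varphi'(s)$, hence $\Delta_2\cap\nabla_2$, enters) together with a \textsc{Diening--Ettwein}-type averaging lemma. The paper instead splits the cube into $\{|\nabla\zeta|\leq S\}$ and $\{|\nabla\zeta|>S\}$ and compares both sides separately to $|\nabla\zeta|^2$ on the small set and to $\varphi(1+|z_0|+|\nabla\zeta|)-\varphi(1+|z_0|)$ on the large set, via the ad hoc Lemmas~\ref{lem:aux1}--\ref{lem:quadr}; both arguments then exploit the same cancellation of the linear term $\langle\mathrm{D}V_\varphi(z_0),\nabla\zeta\rangle$ upon integration. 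Your proof is shorter and more conceptual; the paper's route is more self-contained and, crucially, Lemmas~\ref{lem:aux1} and~\ref{lem:aux2} are deliberately formulated so as to also cover $\varphi(t)=t\log(1+t)\notin\nabla_2$, and are reused verbatim in Section~\ref{sec:LlogL}. Your Hessian lower bound fails in that borderline case (the radial eigenvalue of $\mathrm{D}^2 V_\varphi$ degenerates without $\nabla_2$), so your argument does not transfer there without modification.
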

	For the proof of Theorem \ref{lem:quasiconvex}, we record three auxiliary lemmas. We begin with: 
	\begin{lemma} \label{lem:aux1}
		Let $\varphi: \R_{\geq 0} \to \R_{\geq 0}$ be strictly monotonically increasing with either 
		\begin{enumerate}
			\item  $\varphi \in \Delta_2 \cap \nabla_2$ or
			\item $\varphi(t)=t\log(1+t)$, $t\geq 0$, 
		\end{enumerate} 
		and let $M>0$ be given. 
		Then there exists a  constant $S>3M$ and $C_1,C_2>0$ such that for all $\xi$ with $|\xi|\geq S$ and all $z_0$ with $|z_0| \leq M$ it holds 
		\begin{align}
			C_1 \left( \varphi(\sqrt{1+|z_0+\xi|^2}) \right.&\left. - \varphi(\sqrt{1+|z_0|^2}) \right) \notag \\ & \leq \varphi(1+|z_0|+|\xi|) - \varphi(1+|z_0|) \label{eq:aux1.1} \\
			& \leq C_2 \left( \varphi(\sqrt{1+|z_0+\xi|^2}) - \varphi(\sqrt{1+|z_0|^2}) \right). \label{eq:aux1.2}
		\end{align}
	\end{lemma}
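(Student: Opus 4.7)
The plan is to reduce both bounds \eqref{eq:aux1.1} and \eqref{eq:aux1.2} to the single observation that, for $|\xi|$ large relative to $M$, the two arguments $\sqrt{1+|z_0+\xi|^2}$ and $1+|z_0|+|\xi|$ are comparable up to a multiplicative constant depending only on $M$. Once this comparability is in place, the $\Delta_2$-condition transfers comparability of arguments into comparability of $\varphi$-values, and the subtracted terms $\varphi(\sqrt{1+|z_0|^2})$ and $\varphi(1+|z_0|)$---both trivially bounded by $\varphi(1+M)$---can then be absorbed by enlarging $S$.

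First, under $|\xi|\geq S>3M$ and $|z_0|\leq M$, I would verify the elementary chain
\[
\tfrac{1}{2}|\xi| \;\leq\; |\xi|-M \;\leq\; |z_0+\xi| \;\leq\; \sqrt{1+|z_0+\xi|^2} \;\leq\; 1+|z_0|+|\xi| \;\leq\; 2+2|\xi|,
\]
which shows that both $\sqrt{1+|z_0+\xi|^2}$ and $1+|z_0|+|\xi|$ lie in a fixed multiplicative window around $|\xi|$, with constants depending only on $M$. Iterating the $\Delta_2$-condition a finite and fixed number of times---noting that $t\mapsto t\log(1+t)$ is itself in $\Delta_2$, so case (ii) is covered as well---then yields a constant $K=K(M,\varphi)$ with
\[
\varphi\bigl(\sqrt{1+|z_0+\xi|^2}\bigr) \;\leq\; \varphi(1+|z_0|+|\xi|) \;\leq\; K\,\varphi\bigl(\sqrt{1+|z_0+\xi|^2}\bigr).
\]

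Next, since $\varphi$ is non-decreasing and $\varphi(t)\to\infty$, I can further enlarge $S$ so that $\varphi(|\xi|/2) \geq 4K\,\varphi(1+M)$ for every $|\xi|\geq S$. Combined with the trivial bound $\varphi(\sqrt{1+|z_0|^2})+\varphi(1+|z_0|)\leq 2\varphi(1+M)$, this forces each subtracted term to be at most a fixed fraction of the corresponding positive term, so
\[
\varphi\bigl(\sqrt{1+|z_0+\xi|^2}\bigr) - \varphi\bigl(\sqrt{1+|z_0|^2}\bigr) \;\simeq\; \varphi\bigl(\sqrt{1+|z_0+\xi|^2}\bigr),
\]
and analogously for $\varphi(1+|z_0|+|\xi|)-\varphi(1+|z_0|)$. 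Chaining the two comparisons then produces \eqref{eq:aux1.1} and \eqref{eq:aux1.2} with explicit $C_1,C_2$ depending only on $M$ and $\varphi$. The only point requiring attention is that the choice of $S$ must be uniform in $z_0$ on the ball $\{|z_0|\leq M\}$; this is automatic since every estimate above only involves $|z_0|$ through the scalar bound $M$. The $\nabla_2$-assumption plays no role here---only $\Delta_2$ and the divergence $\varphi(t)\to\infty$ are invoked---so cases (i) and (ii) are handled by exactly the same argument, and the main (rather mild) obstacle is simply book-keeping to see that all constants can be chosen to depend only on $M$ and on the doubling constant of $\varphi$.
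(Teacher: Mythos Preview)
Your proof is correct and follows essentially the same strategy as the paper's: both arguments show that for $|\xi|$ large relative to $M$ the quantities $\sqrt{1+|z_0+\xi|^2}$ and $1+|z_0|+|\xi|$ are comparable, invoke $\Delta_2$ to pass this to comparability of the $\varphi$-values, and then choose $S$ large enough that the bounded terms $\varphi(\sqrt{1+|z_0|^2})$ and $\varphi(1+|z_0|)$ can be absorbed. Your remark that only $\Delta_2$ is actually used---so that $t\log(1+t)\in\Delta_2$ needs no separate treatment---is a mild but genuine streamlining of the paper's proof, which nominally invokes $\Delta_2\cap\nabla_2$ for case~(i) and handles case~(ii) by a direct asymptotic computation of $\log(1+t)/\log(1+t/12)$.
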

	\begin{proof}[Proof of Lemma \ref{lem:aux1}] 
		For \eqref{eq:aux1.1}, we show that there exists $C_1>0$ with 
		\begin{align}\label{eq:helpful0A}
			C_1 \leq \frac{\varphi(1+|z_0|+|\xi|) - \varphi(1+|z_0|)}{\varphi(\sqrt{1+|z_0+\xi|^2}) - \varphi(\sqrt{1+|z_0|^2})} =: (*)
		\end{align}
		for all $|\xi|\geq S$ and $|z_{0}|\leq M$. 
		
		We have that 
		\begin{align}\label{eq:helpful1}
			\begin{split}
				\varphi(\sqrt{1+|z_{0}+\xi|^{2}})-\varphi(\sqrt{1+|z_{0}|^{2}}) & \leq \varphi(\sqrt{1+|z_{0}+\xi|^{2}}) \\ & \leq \varphi(1+|z_{0}|+|\xi|). 
			\end{split}
		\end{align}
		Now, since $\varphi$ is continuous and strictly monotonically increasing, there exists $S>M$ such that $|\xi|\geq S$ implies that 
		\begin{align*}
			2\varphi(1+M)\leq \varphi(1+|\xi|), 
		\end{align*}
		whereby $|z_{0}|\leq M$ and $|\xi|\geq S$ give us 
		\begin{align*}
			\varphi(1+|z_{0}|) \leq \varphi(1+M) \leq \frac{1}{2}\varphi(1+|\xi|)\leq \frac{1}{2}\varphi(1+|z_{0}|+|\xi|),
		\end{align*}
		so that in particular
		\begin{align}\label{eq:helpful2}
			-\frac{\varphi(1+|z_{0}|)}{\varphi(1+|z_{0}|+|\xi|)}\geq -\frac{1}{2}. 
		\end{align}
		Therefore, $|z_{0}|\leq M$ and $|\xi|\geq S$ combine to 
		\begin{align*}
			(*) & \stackrel{\eqref{eq:helpful1}}{\geq}  \frac{\varphi(1+|z_0|+|\xi|)}{\varphi(1+|z_{0}|+|\xi|)} - \frac{\varphi(1+|z_{0}|)}{\varphi(1+|z_{0}|+|\xi|)} \stackrel{\eqref{eq:helpful2}}{\geq} \frac{1}{2} =: C_1.
		\end{align*}
		This establishes \eqref{eq:helpful0A}.
		For \eqref{eq:aux1.2} we proceed similarly, but have to argue slightly differently. We show that a $C_2 > 0$ exists with \[
		C_2 \leq  \frac{\varphi(\sqrt{1+|z_0+\xi|^2}) - \varphi(\sqrt{1+|z_0|^2})}{\varphi(1+|z_0|+|\xi|) - \varphi(1+|z_0|)} =: (**).
		\]
		We have \[
		|z_0| + |\xi| \leq M + |\xi| \leq 2M + |\xi| - |z_0| \leq 2|z_0 + \xi|.
		\]
		This gives us \[
		1 + |z_0| + |\xi| \leq 2 + 2|z_0 + \xi| \leq 12\sqrt{1+|z_0+\xi|^2}.
		\]
		Since $\sqrt{1+|z_0|^2} \leq 1+|z_0|$, we have \[
		-\varphi \left(\sqrt{1+|z_0|^2} \right)  \geq -\varphi(1+|z_0|)
		\]
		and therefore \[
		(**) \geq \frac{\varphi \left(\frac{1}{12}(1+|z_0|+|\xi| \right) - \varphi(1+|z_0|)}{\varphi(1+|z_0|+|\xi|)} =: (***).
		\]
		If now $\varphi \in \Delta_2 \cap \nabla_2$, then we find an $K>0$ such that \[
		\varphi \left(\frac{1}{12}(1+|z_0|+|\xi|)\right) \geq K\varphi(1+|z_0|+|\xi|).
		\] 
		If on the other hand $\varphi(t) = t \log(1+t)$, we define \[
		f(t) := \frac{\log(1+t)}{\log(1+\frac{1}{12}t)}. 
		\]
		For $t \geq 1$, $f(t) > 0$ holds. Furthermore, observe the asymptotic behavior of $f(t)$ as $t \to \infty$. Expanding the logarithmic terms for large $t$, we find
		\[
		\log(1+t) \sim \log(t), \quad \log\left(1 + \frac{1}{12}t\right) \sim \log\left(\frac{1}{12}t\right) = \log(t) - \log(12).
		\]
		Thus, 
		\[
		f(t) \to \frac{\log(t)}{\log(t) - \log(12)} = \frac{1}{1 - \frac{\log(12)}{\log(t)}}.
		\]
		As $t \to \infty$, $f(t) \to 1$. Since $f$ is continuous in $t$, we conclude that we could find a $K>0$, such that \[
		\frac{1}{12K} \leq f(t)
		\]
		for all $t \geq 1$ and therefore again \[
		\varphi \left(\frac{1}{12}(1+|z_0|+|\xi|) \right) \geq K\varphi(1+|z_0|+|\xi|).
		\]
		In both cases, $\varphi$ is strictly monotonically increasing, and we therefore find an $S>3M$, such that for all $|\xi| \geq S$ implies \[
		\frac{2}{K} \varphi(1+M) \leq \varphi(1+|\xi|),
		\]
		whereby $|z_0| \leq M$ and $|\xi| \geq S$ give us (in the same way as in \eqref{eq:helpful2}) \begin{equation} \label{eq:helpful3}
			-\frac{\varphi(1+|z_{0}|)}{\varphi(1+|z_{0}|+|\xi|)}\geq -\frac{K}{2}. 
		\end{equation}
		Now we conclude that \[
		(***) \geq \frac{K\varphi(1+|z_0|+|\xi|) - \varphi(1+|z_0|)}{\varphi(1+|z_0|+|\xi|)} \overset{\eqref{eq:helpful3}}{\geq} K - \frac{1}{2}K = \frac{1}{2} K =: C_2, 
		\]
		and this implies the claim. 
	\end{proof}
	
	\begin{lemma} \label{lem:aux2}
		Let $\varphi \in \hold^2((0, \infty), \mathbb{R}_{\geq 0})$ {be convex and strictly increasing}, and let $M>0$. For all $|z_0| \leq M$ and all $S>0$, there are constants $C_3, C_4 > 0$ such that 
		\[
		E(z_0, \xi) := \varphi(\sqrt{1+|z_0+\xi|^2}) - \varphi(\sqrt{1+|z_0|^2}) - \langle \nabla_\xi[\varphi(\sqrt{1+|z_0+\xi|^2})]_{\mid \xi=0}, \xi \rangle
		\]
		satisfies
		\begin{equation}
			C_3 |\xi|^2 \leq E(z_0, \xi) \leq C_4 |\xi|^2 \label{eq:bounds}
		\end{equation}
		for all $|\xi|\leq S$. The constants depend on $M$ and $S$.
	\end{lemma}
	\begin{proof}[{Proof of Lemma \ref{lem:aux2}}]
		{
			Let $f_{z_0}(\xi) := \varphi \left(\sqrt{1+|z_0 + \xi|^2} \right)$. The Taylor expansion of $f_{z_0}$ is \[
			f_{z_0} (\xi) =  \varphi(\sqrt{1+|z_0|^2}) + \langle \nabla_\xi[\varphi(\sqrt{1+|z_0+\xi|^2})]_{\mid \xi=0}, \xi \rangle + \frac{1}{2} \xi^\top H_{f_{z_0}} (\theta \xi) \xi,
			\]
			where $\theta \in (0,1)$, and $H_{f_{z_0}}$ is the Hessian of $f_{z_0}$. Therefore, \[
			E(z_0, \xi) = \frac{1}{2} \xi^\top H_{f_{z_0}}(\theta \xi) \xi. 
			\]
			Since $\varphi$ is a strictly increasing, convex function, and $\xi \mapsto \sqrt{1+|z_0 + \xi|^2}$ is strictly convex, $H_{f_{z_0}}$ is positive definite. By continuity, all eigenvalues of $H_{f_{z_0}} (\theta \xi)$ are uniformly bounded from below by some positive constant on the compact set, where $|z_0| \leq M$ and $|\xi| \leq S$. This yields \eqref{eq:bounds}.}
	\end{proof}

	\begin{lemma} \label{lem:quadr}
		Let $M,S>0$ and $|z_0| \leq M$. Let $\varphi$ be an $N$-function. Then \[
		\varphi_{1+|z_0|}(|z|) \simeq |z|^2 
		\]
		for all $|z_0| \leq M$ and $|z| \leq S$, where the underlying constants only depend on $M$ and $S$.
	\end{lemma}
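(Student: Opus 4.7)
The plan is to unwind the definition of the shifted $N$-function and exploit that the integrand in $\varphi_{1+|z_0|}(|z|)$ is controlled above and below by absolute constants on the compact parameter range dictated by $M$ and $S$. Specifically, with $a := 1+|z_{0}|$ and $t := |z|$, we have
\begin{equation*}
\varphi_{1+|z_{0}|}(|z|) = \int_{0}^{|z|}\frac{\varphi'(1+|z_{0}|+s)}{1+|z_{0}|+s}\,s\,\dif s,
\end{equation*}
and the hypothesis $|z_{0}|\leq M$, $|z|\leq S$ forces $a+s\in[1,1+M+S]$ throughout the integration.

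The key observation is that, since $\varphi$ is an $N$-function, its derivative $\varphi'$ is non-decreasing with $\varphi'(r)>0$ for all $r>0$. Hence on the compact interval $[1,1+M+S]$ we have the two-sided pointwise estimate
\begin{equation*}
0<\varphi'(1)\leq \varphi'(1+|z_{0}|+s)\leq \varphi'(1+M+S)<\infty,
\end{equation*}
and analogously $\tfrac{1}{1+M+S}\leq \tfrac{1}{1+|z_{0}|+s}\leq 1$. Multiplying these bounds yields constants $0<c_{1}(M,S,\varphi)\leq c_{2}(M,S,\varphi)<\infty$ such that
\begin{equation*}
c_{1}\leq \frac{\varphi'(1+|z_{0}|+s)}{1+|z_{0}|+s}\leq c_{2}\qquad\text{for all }0\leq s\leq|z|.
\end{equation*}

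I would then insert this sandwich into the integral representation and use $\int_{0}^{|z|}s\,\dif s = \tfrac{1}{2}|z|^{2}$ to conclude
\begin{equation*}
\tfrac{c_{1}}{2}|z|^{2}\leq \varphi_{1+|z_{0}|}(|z|)\leq \tfrac{c_{2}}{2}|z|^{2},
\end{equation*}
which is exactly the asserted equivalence $\varphi_{1+|z_{0}|}(|z|)\simeq |z|^{2}$ with constants depending only on $M$, $S$ (and $\varphi$). There is no substantial obstacle here; the only mild subtlety is that $\varphi'$ need not be continuous, merely right-continuous and non-decreasing, but monotonicity alone suffices to extract the uniform upper and lower bounds on the compact interval $[1,1+M+S]$, so the argument goes through without modification.
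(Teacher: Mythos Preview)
Your proof is correct and, if anything, more direct than the paper's. The paper instead considers the ratio $g(z_{0},z):=|z|^{2}/\varphi_{1+|z_{0}|}(|z|)$, resolves the indeterminate form at $|z|=0$ by computing $\varphi''_{1+|z_{0}|}(0)=\varphi'(1+|z_{0}|)>0$ via L'H\^{o}pital, and then invokes continuity of $g$ on the compact set $\{|z_{0}|\leq M,\ |z|\leq S\}$ to obtain uniform positive upper and lower bounds. Your approach sidesteps both the limit computation and the appeal to $\varphi''$: by bounding the integrand $s\mapsto \varphi'(1+|z_{0}|+s)/(1+|z_{0}|+s)$ uniformly via the monotonicity of $\varphi'$ on $[1,1+M+S]$, you get the comparison with $\int_{0}^{|z|}s\,\dif s=\tfrac{1}{2}|z|^{2}$ in one stroke. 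This is slightly more robust, since it uses only that $\varphi'$ is non-decreasing and positive on $(0,\infty)$, whereas the paper's argument implicitly relies on the $\hold^{2}$-regularity assumed in the definition of the shifted $N$-function.
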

	
	\begin{proof}
		Let $|z_0| \leq M$. It suffices to show that \[
		g(z_0,z):=\frac{|z|^2}{\varphi_{1+|z_0|}(|z|)}, \,\, |z| \leq S
		\]
		is bounded both above and below by positive constants that rely only on $M$ and $S$. Since $\varphi'_{1+|z_0|}(|z|) = 0$ if and only if $z=0$, we conclude that \[
		\lim \limits_{|z| \searrow 0}  \frac{|z|^2}{\varphi_{1+|z_0|}(|z|)} = \lim \limits_{|z| \searrow 0} \frac{2}{\varphi''_{1+|z_0|}(|z|)},
		\]
		so it suffices to show that $\varphi''_{1+|z_0|}(0) > 0$. Indeed, it holds \[
		\varphi''_{1+|z_0|}(|z|) = \frac{\varphi''(1+|z_0|+|z|)}{1+|z_0|+|z|}|z| + \varphi'(1+|z_0|+|z|)\frac{1+|z_0|}{(1+|z_0|+|z|)^2},
		\]
		and therefore \[
		\varphi''_{1+|z_0|}(0) = \frac{\varphi'(1+|z_0|)}{1+|z_0|} > 0.
		\]
		Now it remains to state that $g$ is continuous in both variables, implying that on the compact set $|z_0|\leq M$, $|z| \leq S$, it is therefore positively bounded from above and below.
	\end{proof}
	We now come to the: 
	\begin{proof}[{{Proof of Theorem \ref{lem:quasiconvex}}}]
		Let $S := \max \{3M, 1\}$ and $Q=(0,1)^n$. We decompose the integral as follows:
		\begin{align*}
			\int \limits_{Q} \varphi_{1+|z_0|}(|\nabla \zeta|) \, \mathrm{d}x 
			&= \int \limits_{Q \cap \{|\nabla \zeta| \leq S\}} \varphi_{1+|z_0|}(|\nabla \zeta|) \, \mathrm{d}x + \int \limits_{Q \cap \{|\nabla \zeta| \geq S\}} \varphi_{1+|z_0|}(|\nabla \zeta|) \, \mathrm{d}x \\
			&=: A + B.
		\end{align*}
		\noindent
		First, by Lemma \ref{lem:quadr} we have
		\begin{equation}
			\varphi_{1+|z_0|}(|z|) \simeq |z|^2 \label{shift:est}
		\end{equation}
		for $|z_0| \leq M$ and $|\nabla \zeta| \leq S$.
		By Lemma \ref{lem:aux2}, the Taylor remainder of $V_\varphi$ is also quadratically bounded. Since $\zeta \in \hold_c^\infty((0,1)^n;V)$, the integral over the linear term vanishes:
		\[
		\int_Q \langle (\nabla V_\varphi)(z_0), \nabla \zeta \rangle \dd{x} = 0.
		\]
		Thus, we arrive at
		\begin{align*}
			A & \lesssim \int \limits_{Q \cap \{|\nabla \zeta| \leq S\}} |\nabla \zeta|^2 \dd{x} \\
			& \stackrel{\text{Lemma}~\ref{lem:aux2}}{\lesssim} \int \limits_{Q \cap \{|\nabla \zeta| \leq S\}} V_\varphi(z_0 + \nabla \zeta) - V_\varphi(z_0) - \langle (\nabla V_\varphi)(z_0), \nabla \zeta \rangle \dd{x} \\
			& = \int \limits_{Q \cap \{|\nabla \zeta| \leq S\}} V_\varphi(z_0 + \nabla \zeta) - V_\varphi(z_0) \dd{x}.
		\end{align*}
		As the integrand is positive due to the convexity of $V_\varphi$, we can extend the integral to the whole domain $Q$:
		\[
		\int \limits_{Q \cap \{|\nabla \zeta| \leq S\}} V_\varphi(z_0 + \nabla \zeta) - V_\varphi(z_0) \dd{x} \le \int_Q V_\varphi(z_0 + \nabla \zeta) - V_\varphi(z_0) \dd{x}.
		\]
		For $B$, we use that $\varphi'(0)=0$ and therefore 
		\begin{align*}
			\varphi_{1+|z_0|}(|\nabla \zeta|)  = \int \limits_0^{|\nabla \zeta|} \frac{\varphi'(1+|z_0|+\tau)}{1+|z_0|+\tau} \tau \dd{\tau}  \simeq \int \limits_0^{|\nabla \zeta|} {\varphi''(1+|z_0|+\tau)} \tau \dd{\tau} =: \tilde{B}.
		\end{align*}
		Integrating by parts, we further get
		\begin{align*}
			\tilde{B}
			= \varphi'(1+|z_0|+|\nabla \zeta|)&|\nabla \zeta|  - \varphi(1+|z_0|+ |\nabla \zeta|) + \varphi(1+|z_0|) \\
			& \lesssim \varphi(1+|z_0|+ |\nabla \zeta|)  - \varphi(1+|z_0|),
		\end{align*}
		where for the last estimate we need that $\varphi(0)=0.$ Now, Lemma \ref{lem:aux1} implies that 
		\begin{equation}
			\varphi(1+|z_0|+|\nabla \zeta|) - \varphi(1+|z_0|) \lesssim \varphi(\sqrt{1+|z_0+\nabla \zeta|^2}) - \varphi(\sqrt{1+| z_0|^2}), \label{est}
		\end{equation}
		since $\varphi \in \Delta_2 \cap \nabla_2$.
		Thus, 
		\begin{align*}
			B & \lesssim \int \limits_{Q \cap \{|\nabla \zeta| \geq S\}} \varphi \left(\sqrt{1+|z_0+\nabla \zeta|^2} \right) - \varphi \left(\sqrt{1+|z_0|^2} \right) \\
			& = \int \limits_{Q \cap \{|\nabla \zeta| \geq S\}} V_\varphi(z_0 + \nabla \zeta) - V_\varphi(z_0) \\
			& \leq \int \limits_{Q} V_\varphi(z_0 + \nabla \zeta) - V_\varphi(z_0).
		\end{align*}
		We have thus estimated $A + B$ from above. 
		
		For the lower estimate, we proceed analogously. The integrands are positive, so we can estimate the terms individually. For term $A$, we use Lemma \ref{lem:aux2}.
		\begin{align*}
			A & \stackrel{\text{Lemma}~\ref{lem:quadr}}{\simeq} \int \limits_{Q \cap \{|\nabla \zeta| \leq S\}} |\nabla \zeta|^2 \dd{x} \\
			& \stackrel{\text{Lemma}~\ref{lem:aux2}}{\gtrsim} \int \limits_{Q \cap \{|\nabla \zeta| \leq S\}} \left( V_\varphi(z_0 + \nabla \zeta) - V_\varphi(z_0) - \langle (\nabla V_\varphi)(z_0), \nabla \zeta \rangle \right) \dd{x} =: C
		\end{align*}
		and \begin{align*}
			B & \simeq \int \limits_{Q \cap \{|\nabla \zeta| \geq S\}} \varphi(1+|z_0|+ |\nabla \zeta|)  - \varphi(1+|z_0|) \dd{x} \\
			& \gtrsim  \int \limits_{Q \cap \{|\nabla \zeta| \geq S\}} \varphi(1+|z_0+ \nabla \zeta|)  - \varphi(1+|z_0|) \dd{x} \\
			& \gtrsim \int \limits_{Q \cap \{|\nabla \zeta| \geq S\}} \varphi(\sqrt{1+|z_0+ \nabla \zeta|^2})  - \varphi(\sqrt{1+|z_0|^2}) \dd{x} \\
			& \simeq \int \limits_{Q \cap \{|\nabla \zeta| \geq S\}} V_\varphi(z_0+ \nabla \zeta)  - V_\varphi(z_0) \dd{x} =: D.
		\end{align*}
		Note that the integrands are positive due to convexity, and we used the same argument as in \eqref{est}, just the other way around.
		Since $$C + D = \int \limits_{Q} V_\varphi(z_0 + \nabla \zeta) - V_\varphi(z_0) \dd{x},$$ the claim follows for $\mathbb{A}=\nabla$. It is clear that we may replace $\nabla$ by $\mathbb{A}$ in the above estimates, and so the proof is complete.
	\end{proof}
	
	\section{Proof of Theorem \ref{OrliczMainThm}}
	\label{Proof}
	\subsection{Partial regularity for full gradient functionals}
	Following the strategy of \textsc{Conti \& Gmeineder} \cite{contiFXG,gmeineder_partial_2021}, the key to Theorem \ref{OrliczMainThm} is a reduction to the corresponding partial regularity results for full gradient functionals. Therefore, we first collect in this subsection the requisite regularity results which make this reduction procedure possible. 
	
	As above, we let $V,W$ be two finite dimensional, real inner product spaces. In the following, we assume that $H: W \to \R$ satisfies:
	\begin{enumerate}[(H1)]
		\item $H \in \hold^2(W)$, \label{H1}
		\item There exists a constant $c > 0$ such that $$|H(z)| \leq c(1+\varphi(|z|))$$ for all $z \in W$, \label{H2}
		\item For every $M > 0$, there exists a constant $l_M > 0$ such that
		\[
		\int_{(0,1)^n} \left[ H(z_0 + \nabla \zeta) - H(z_0) \right] \dd{x} \geq l_M \int_{(0,1)^n} \varphi_{1+|z_0|}(|\nabla \zeta|) \dd{x}
		\]
		for all $|z_0| < M$ and all  $\zeta \in \hold_c^\infty((0,1)^n; V)$. \label{H3}
	\end{enumerate}
	Our focus is on local minimizers of the functional
	\begin{equation}
		\Phi(v; \omega) := \int_\omega H(\mathrm{D} v) \dd{x}, \label{eq:functional}
	\end{equation}
	where $v \in \sobo^{1,\varphi}(\omega; V)$, and $\omega\subset\R^{n}$ is open and bounded. Given an open and bounded subset $\Omega\subset\R^{n}$, a function $u \in \sobo_\text{loc}^{1,\varphi}(\Omega; V)$ is termed a \emph{local minimizer of \eqref{eq:functional}} if
	\begin{equation}
		\Phi(u; \omega) \leq \Phi(u + \zeta; \omega) \label{loc:min}
	\end{equation}
	holds for every open and bounded set ${\omega} \Subset \Omega$ and all $\zeta \in \sobo_{0}^{1,\varphi}(\omega;V)$. 
	We then have the following partial regularity theorem: 
	\begin{satz}[{{Partial regularity theorem, cf. \cite[Theorem 1.2]{Irving2021partial}, \cite{diening_partial_2012}}}]
		\label{thm:IrvMainReg}
		Let $H$ satisfy the conditions \ref{H1}-\ref{H3}. For $\Omega \subset \mathbb{R}^n$ open and bounded, $\widetilde{M} > 0$, and  $\alpha \in (0,1)$, there exists $\varepsilon > 0$ such that the following holds: If $u \in \sobo_\mathrm{loc}^{1, \varphi}(\Omega; V)$ is a local minimizer of \eqref{eq:functional} and $B_R(x_0) \Subset \Omega$ is such that both $|(\nabla u)_{B_R(x_0)}| \leq \widetilde{M}$ and 
		\[
		\fint_{B_R(x_0)} \varphi_{1+\widetilde{M}} (|\nabla u - (\nabla u)_{B_R(x_0)}|) \, \mathrm{d}x \leq \varepsilon,
		\] 
		are satisfied, 
		then $u$ is of class $\hold^{1, \alpha}$ in $B_{\frac{R}{2}}(x_0)$.
	\end{satz}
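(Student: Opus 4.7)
The plan is to execute the standard partial regularity scheme via $\mathcal{A}$-harmonic approximation, adapted to shifted $N$-functions in the spirit of \cite{diening_partial_2012, Irving2021partial}. Fix $x_{0}$, set $B_{r} := B_{r}(x_{0})$ and abbreviate $z_{0} := (\nabla u)_{B_{R}}$. The object to control is the shifted excess
\[
\Phi(x_{0},R) := \fint_{B_{R}} \varphi_{1+|z_{0}|}(|\nabla u - z_{0}|)\dif x,
\]
and the goal is to show that, once $\Phi(x_{0},R)\leq \varepsilon$ and $|z_{0}|\leq \widetilde{M}$, this excess decays like a power of the radius on every concentric ball, on every point of $B_{R/2}$. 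A Campanato-type characterization in the Orlicz setting (see \cite{diening_partial_2012}) then upgrades this decay to $\hold^{1,\alpha}$-regularity of $\nabla u$.

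First I would establish a Caccioppoli inequality of the second kind: using a cut-off $\eta \in \hold_{c}^{\infty}(B_{R})$ with $\eta \equiv 1$ on $B_{R/2}$ and the test function $\zeta := \eta(u - z_{0}x - a)$, the strong $\varphi$-quasiconvexity \ref{H3} at $z_{0}$ (applied after freezing $z_{0}$ and scaling back to the unit cube) together with the Lipschitz-type bound of Lemma \ref{lem:Lipbound} for $H$ yields
\[
\fint_{B_{R/2}} \varphi_{1+|z_{0}|}(|\nabla u - z_{0}|)\dif x \lesssim \fint_{B_{R}} \varphi_{1+|z_{0}|}\!\left(\frac{|u-z_{0}x-a|}{R}\right)\dif x.
\]
Here the shift-uniformity from Lemma \ref{Delta:Nabla:Comp} is essential, since Young-type splittings must be performed with $\Delta_{2}\cap\nabla_{2}$-constants that do not blow up as $|z_{0}|\to\widetilde{M}$.

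Next comes the linearization and $\mathcal{A}$-harmonic approximation step. Setting $\mathcal{A} := D^{2}H(z_{0})$, which is Legendre--Hadamard elliptic because the quadratic regime of $\varphi_{1+|z_{0}|}$ is $|\cdot|^{2}$ (Lemma \ref{lem:quadr}) and \ref{H3} then encodes strict quasiconvexity of the quadratic form $\mathcal{A}$, the Euler--Lagrange equation for $u$ and a Taylor expansion with concave modulus of continuity show that $w := u - z_{0}x - a$ is approximately $\mathcal{A}$-harmonic:
\[
\left| \fint_{B_{R}} \langle \mathcal{A}\nabla w, \nabla \psi \rangle\dif x \right| \leq \omega(\Phi(x_{0},R))\, \|\nabla \psi\|_{\lebe^{\infty}}
\]
for all $\psi \in \hold_{c}^{\infty}(B_{R};V)$, with $\omega(\cdot)$ vanishing at $0$. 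An Orlicz-valued $\mathcal{A}$-harmonic approximation lemma (the heart of \cite{diening_partial_2012}) then produces an honestly $\mathcal{A}$-harmonic map $h$ on $B_{R/2}$ with
\[
\fint_{B_{R/2}} \varphi_{1+|z_{0}|}\!\left(\frac{|w-h|}{R}\right)\dif x \leq \delta\, \Phi(x_{0},R),
\]
and $\delta$ can be made arbitrarily small by choosing $\varepsilon$ small. Linear elliptic regularity for the constant-coefficient operator $\mathcal{A}$ gives smoothness of $h$ and the pointwise bound $\sup_{B_{\tau R}} |\nabla h - (\nabla h)_{B_{R/2}}| \lesssim \tau \,(\Phi(x_{0},R))^{1/2}$ after transferring back through $\varphi_{1+|z_{0}|}$.

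Combining Caccioppoli, the approximation estimate, and the decay for $h$ produces an excess decay of the shape
\[
\Phi(x_{0},\tau R) \leq C_{\star}\bigl(\tau^{2} + \tau^{-(n+2)}\delta\bigr)\Phi(x_{0},R)
\]
with an updated base gradient still close to $z_{0}$. Choosing $\tau$ small (beating the Hölder exponent $\alpha$), then $\varepsilon$, hence $\delta$, small enough relative to $\tau$, converts this into $\Phi(x_{0},\tau R) \leq \tau^{2\alpha}\Phi(x_{0},R)$. A standard open-set argument shows that the smallness of $\Phi$ and the bound on the mean gradient propagate to nearby centres $y \in B_{R/2}(x_{0})$, and iterating the decay then yields $\Phi(y,r) \lesssim r^{2\alpha}$ on all such balls. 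The Campanato characterization in Orlicz scales (again from \cite{diening_partial_2012,Irving2021partial}) concludes $\nabla u \in \hold^{0,\alpha}(B_{R/2}(x_{0});W)$. The principal obstacle is the $\mathcal{A}$-harmonic approximation itself: one has to split the domain into the quadratic regime $\{|\nabla u - z_{0}| \lesssim 1 + |z_{0}|\}$ and the genuine $\varphi$-regime, and balance the two through duality in $\varphi_{1+|z_{0}|}$ and $(\varphi_{1+|z_{0}|})^{*}$, with constants uniform in $|z_{0}|\leq \widetilde{M}$; this is exactly where the shift-uniformity of $\Delta_{2}\cap\nabla_{2}$ in Lemma \ref{Delta:Nabla:Comp} and the quadratic-at-the-origin behaviour in Lemma \ref{lem:quadr} are indispensable.
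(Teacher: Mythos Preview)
The paper does not prove this theorem. Theorem~\ref{thm:IrvMainReg} is stated as a known result from the literature, cited to \cite[Theorem 1.2]{Irving2021partial} and \cite{diening_partial_2012}, and then used as a black box in the reduction argument for Theorem~\ref{OrliczMainThm}; the paper even explicitly remarks that it is ``just a small part of the theory of partial regularity for minimizers, but it is very useful for our purposes.'' So there is no in-paper proof to compare your attempt against.

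That said, your sketch is a faithful outline of the argument actually carried out in the cited references: Caccioppoli inequality from the strong $\varphi$-quasiconvexity \ref{H3}, linearization at $z_{0}$ with $\mathcal{A}=D^{2}H(z_{0})$, $\mathcal{A}$-harmonic approximation in shifted Orlicz scales, excess decay, and Campanato iteration. The uniformity issues you flag (shift-uniform $\Delta_{2}\cap\nabla_{2}$ constants via Lemma~\ref{Delta:Nabla:Comp}, and the quadratic behaviour of $\varphi_{1+|z_{0}|}$ near the origin via Lemma~\ref{lem:quadr}) are indeed the technical points one has to track in those proofs.
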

	Theorem \ref{thm:IrvMainReg} represents just a small part of the theory of partial regularity for minimizers, but it is very useful for our purposes.
	\subsection{Proof of Theorem \ref{OrliczMainThm}}
	Let $\Omega \subset \mathbb{R}^n$ be open. Given an integrand $F: W \to \mathbb{R}$, we define
	\[
	\mathcal{F}[u;\omega] := \int_\omega F(\mathbb{A}u) \, \mathrm{d}x
	\]
	for all open subsets $\omega \subset \Omega$. 
	Based on the previous lemma, we may assume that $\mathbb{A}$ is elliptic. For future reference, we note that it is then no loss of generality to assume that 
	\begin{align}\label{eq:embass}
		W\hookrightarrow \R^{N\times n}, 
	\end{align}
	see Remark \ref{rem:choiceofW}. 
	
	We now proceed with the proof of Theorem \ref{OrliczMainThm}. The  technique of proof employed here is parallel to that described in \cite{contiFXG} for functions with power growth, particularly through the utilization of a reduction strategy.
	\begin{proof}[Proof of Theorem \ref{OrliczMainThm}]
		As in \cite{contiFXG}, to establish the implication $(a)\!\implies\!(b)$ in Theorem \ref{OrliczMainThm}, we aim to connect it with Theorem \ref{thm:IrvMainReg}. To achieve this, we introduce the variational integrand
		\begin{equation}
			G: V \times \mathbb{R}^n \to \R, \quad z \mapsto F(\pi_{\mathbb{A}}(z)), \label{eq:G}
		\end{equation}
		where $\pi_\mathbb{A}$ is a linear map such that $\mathbb{A}v(x) = \pi_\mathbb{A}(\nabla v)(x)$; this is possible due to \cite[Section 3]{contiFXG}, and is analogous to our argument in the proof of Theorem \ref{lem:exis}. Our strategy involves applying Theorem \ref{thm:IrvMainReg} to $G$. Hence, we need to establish the following properties:
		\begin{enumerate}
			\item By \ref{F1}, since $F \in \hold^2(W)$ and $\pi_{\mathbb{A}}$ is linear, we have $G \in \hold^2(V \times \mathbb{R}^n)$.
			\item By \ref{F2}, there exists some $c > 0$ such that $|F(z)| \leq c(1+\varphi(|z|))$ for all $z \in W$. Since $\pi_\mathbb{A}$ is a linear operator on a finite-dimensional space, $\pi_\mathbb{A}$ is bounded. Therefore, we conclude that because $\varphi \in \Delta_2$
			\[
			|G(z)| \leq c(1+\varphi(|z|))
			\]
			for all $z \in V \times \mathbb{R}^n$.
			
			\item Let $M > 0$. Then we have
			\begin{align}
				\int \limits_{\R^n} G(z_0 + \nabla \zeta) - G(z_0) \, \dd{x} & = \int \limits_{\R^n} F(\pi_{\mathbb{A}} (z_0) + \mathbb{A} \zeta) - F(\pi_{\mathbb{A}} (z_0)) \, \dd{x} \label{eq:juhu}\\
				& \geq  \nu_M \int \limits_{\R^n} \varphi_{1+|\pi_{\mathbb{A}}(z_0)|}(|\mathbb{A} \zeta|) \, \dd{x},\notag
			\end{align}
			by \ref{F3} for all $\zeta \in \hold_c^\infty((0,1)^n; V)$ and $|z_0| \leq M$.
			We note that $\Delta_2(\varphi_a) \simeq \Delta_2(\varphi)$ and $\nabla_2(\varphi_a) \simeq\nabla_2(\varphi)$ uniformly in $a$ for all $a>0$ because of Lemma \ref{Delta:Nabla:Comp}. Moreover, since $\pi_{\mathbb{A}}$ is bounded, we find a $\delta > 0$ such that $|\pi_{\mathbb{A}}(z)| \leq \delta |z|$ for all $z \in V$. Since $\varphi_b \simeq \varphi_{M}$ for all $0 \leq b \leq M$ with a constant only depending on $M$ by \eqref{shift:est}, we conclude that
			\[
			\nu_M \int \limits_{\R^n} \varphi_{1+|\pi_{\mathbb{A}}(z_0)|}(|\mathbb{A} \zeta|) \, \dd{x} \geq \nu_M \mu \int \limits_{\R^n} \varphi_{1+|z_0|}(|\mathbb{A} \zeta|) \, \dd{x},
			\]
			where $\mu$ only depends on $M$. We choose $c > 0$ as in Theorem \ref{thm:korn_uni}. Note that $c > 0$ does not depend on $z_0$, because it can be chosen monotonically in $\Delta_2(\varphi) + \nabla_2(\varphi)$. Then we have
			\[
			\nu_M \mu \int \limits_{\R^n} \varphi_{1+|z_0|}(|\mathbb{A} \zeta|) \, \dd{x} \geq c \nu_M \mu \int \limits_{\R^n} \varphi_{1+|z_0|}(|\nabla \zeta|) \, \dd{x}.
			\]
			Now we can choose $l_M := c \nu_M \mu$, which depends only on $M$ and $\varphi$.
		\end{enumerate}
		According to Theorem \ref{thm:IrvMainReg}, all local minima of $v \mapsto \int G(\mathrm{D} v) \, \dd{x}$ belong to the class $\hold_\text{loc}^{1, \alpha}$. Following the approach in \cite{contiFXG}, this implies that for any open subset $\omega \Subset \Omega$ and all $u \in \sobo^{1,p}(\Omega;V)$,
		\begin{equation}
			\mathcal{G}(u; \omega) := \int_\omega G(\mathrm{D}u) \dd{x} = \int_\omega F(\mathbb{A}u) \dd{x} = \mathcal{F}[u;\omega]. \label{fct:G}
		\end{equation}
		It follows that every local minimizer of $\mathcal{F}$ also minimizes $\mathcal{G}$. Thus, the $\hold_{\text{loc}}^{1, \alpha}$ regularity is transferred.
		
		For  $(b)\!\implies\!(a)$ we invoke Lemma \ref{Kernel} as in \cite{contiFXG}. Assuming $\mathbb{A}$ is not elliptic, Lemma \ref{Kernel} provides $v \in \ker(\mathbb{A})$ such that $v \not \in \hold(\omega; V)$ for all open $\omega \subset \Omega$. If $u$ is a local minimizer of class $\hold^{1,\alpha}_{\text{loc}}$, then $u+v$ is also a local minimizer but does not belong to $\hold^{1,\alpha}_{\text{loc}}$. This completes the proof
	\end{proof}
	\begin{bem}[Higher order operators]
		While our analysis primarily focuses on first order differential operators, the reduction techniques and regularity results naturally extend to higher order elliptic homogeneous differential operators of the form 
		\begin{align*}
			\mathbb{A}=\sum_{|\alpha|=k}\mathbb{A}_{\alpha}\partial^{\alpha}.
		\end{align*}
		We leave the straightforward modifications, which rely on \textsc{Irving}'s higher order variant of Theorem \ref{thm:IrvMainReg}, see \cite{Irving2021partial}, to the reader.
	\end{bem}
	\section{Generalization to $L \log L$-growth}\label{sec:LlogL}
	\subsection{A Korn-type inequality}
	Now we consider the case where $\varphi(t) := t \log (1+t)$, which implies $\varphi \in \Delta_2 \backslash \nabla_2$. Our strategy will be similar to that in the previous section. Therefore, we require a Korn inequality applicable to this case. To achieve this, we generalize \textsc{Cianchi's} result \cite[Theorem 3.1]{cianchi_korn_2014} for elliptic operators $\mathbb{A}$. This result builds on earlier work on strong and weak type inequalities for classical operators in Orlicz spaces, as discussed in \cite{cianchi2}. Similar results for the symmetric gradient have been achieved in \cite{breit_trace-free_2017}.
	\begin{satz}[Korn-type inequality in Orlicz spaces] \label{lem:cianchi_ver}
		Let $\Omega$ be an open and bounded set in $\R^n$, $n \geq 2$, and let $\mathbb{A}$ be a homogeneous, first order, elliptic differential operator of the form \eqref{eq:formA}. Moreover, let $\Psi$ and $\Phi$ be Young functions such that
		\begin{equation}
			t \int \limits_{0}^t \frac{\Phi(s)}{s^2} \, \dd{s} \leq \Psi(ct)  \label{3.1}
		\end{equation}
		and 
		\begin{equation}
			t \int \limits_{0}^t \frac{\Psi^*(s)}{s^2} \, \dd{s} \leq \Phi^*(ct) \label{3.2}
		\end{equation}
		hold for all $t \geq 0$ and some $c > 0$. Then there exists a constant $C > 0$ such that
		\[
		\int \limits_\Omega \Phi(|\nabla u|) \, \dd{x} \lesssim \int \limits_\Omega \Psi(C|\mathbb{A}u|) \, \dd{x}\qquad\text{for all}\; u \in \sobo_0^{1,\Psi}(\Omega;V).
		\]
	\end{satz}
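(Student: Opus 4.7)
The plan is to adapt \textsc{Cianchi}'s strategy from \cite{cianchi_korn_2014} for the symmetric gradient to the setting of general elliptic operators $\mathbb{A}$. The argument decomposes into three main steps: a Fourier-analytic representation of $\nabla u$ in terms of $\mathbb{A}u$, a pointwise rearrangement estimate coming from Calder\'{o}n-Zygmund theory, and a transfer of this rearrangement estimate to Orlicz norms via the balance conditions \eqref{3.1} and \eqref{3.2}.

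By density and zero extension, I may assume $u \in \hold_c^\infty(\R^n; V)$. Ellipticity of $\mathbb{A}$ means that $\mathbb{A}[\xi]^*\mathbb{A}[\xi]\colon V \to V$ is invertible for each $\xi \neq 0$. This allows one to solve $\widehat{\mathbb{A}u}(\xi) = \mathbb{A}[\xi]\hat{u}(\xi)$ for $\hat{u}(\xi)$ and substitute into $\widehat{\nabla u}(\xi) = \mathrm{i}\xi \otimes \hat{u}(\xi)$, yielding a matrix of Fourier multipliers whose entries $m_{ij}(\xi)$ are built from $\mathrm{i}\xi_i\,(\mathbb{A}[\xi]^*\mathbb{A}[\xi])^{-1}\mathbb{A}[\xi]^*$. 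Each such $m_{ij}$ is smooth on $\R^n\setminus\{0\}$ and homogeneous of degree $0$, so Theorem \ref{thm:avg} represents the associated operator as a Cauchy principal value singular integral with a kernel satisfying the H\"{o}rmander condition, and in particular the hypotheses of Theorem \ref{thm:Calderon}.

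Applying Theorem \ref{thm:bagby} componentwise to these operators, I obtain the pointwise rearrangement estimate
\[
(|\nabla u|)^{*}(t) \lesssim \frac{1}{t}\int_{0}^{t} (|\mathbb{A}u|)^{*}(s)\,\mathrm{d}s \;+\; \int_{t}^{\infty}(|\mathbb{A}u|)^{*}(s)\,\frac{\mathrm{d}s}{s}.
\]
Writing $\int_\Omega\Phi(|\nabla u|)\,\mathrm{d}x = \int_0^\infty\Phi((|\nabla u|)^{*}(t))\,\mathrm{d}t$ and invoking the convexity of $\Phi$, the problem reduces to showing that the Hardy-type operator $Hf(t):=t^{-1}\int_0^t f^{*}(s)\,\mathrm{d}s$ and its dual $H^{*}f(t):=\int_t^\infty f^{*}(s)\,s^{-1}\mathrm{d}s$ send $L^\Psi$ into $L^\Phi$. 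It is classical that these two boundedness properties are equivalent (respectively) to the balance conditions \eqref{3.1} and \eqref{3.2}, the duality between them reflecting the symmetry of the hypotheses under $(\Phi,\Psi)\leftrightarrow(\Psi^{*},\Phi^{*})$; this is precisely the point where the two assumptions of the theorem get used, and the corresponding computations appear e.g. in \cite{cianchi2, cianchi_korn_2014}.

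I expect the main technical point to be verifying that the Fourier symbols $m_{ij}(\xi)$ arising from inverting $\mathbb{A}[\xi]^*\mathbb{A}[\xi]$ really do produce kernels meeting the H\"{o}rmander condition with constants depending only on $\mathbb{A}$. This ultimately rests on ellipticity, which gives a uniform positive lower bound for $|\mathbb{A}[\xi]|$ on the unit sphere and thus smoothness of $(\mathbb{A}[\xi]^*\mathbb{A}[\xi])^{-1}$ on $\R^n\setminus\{0\}$. Once this is established, the rearrangement-to-Orlicz passage follows \textsc{Cianchi}'s scheme essentially verbatim, as neither the Hardy-type inequalities nor Theorem \ref{thm:bagby} see the vectorial structure of $\mathbb{A}$.
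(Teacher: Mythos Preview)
Your proposal is correct and follows essentially the same route as the paper: express $\nabla u$ via the degree-zero multipliers $\xi_j(\mathbb{A}^*[\xi]\mathbb{A}[\xi])^{-1}\mathbb{A}^*[\xi]$, invoke Theorem~\ref{thm:avg} and Theorem~\ref{thm:bagby} to obtain the rearrangement estimate, and then apply the Hardy-type inequalities from \cite{cianchi_korn_2014} under conditions \eqref{3.1} and \eqref{3.2}. The only cosmetic difference is that the paper carries along the local term $C|\mathbb{A}u|^*(s)$ from the $af$ part of Theorem~\ref{thm:avg} separately and absorbs it via the global domination $\Phi \lesssim \Psi(C\cdot)$, whereas you implicitly absorb it into the Hardy average (which is harmless since $f^*(s)\leq s^{-1}\int_0^s f^*$).
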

	In particular, inequalities \eqref{3.1} and \eqref{3.2} imply that $\Psi$ dominates $\Phi$ globally (cf. \cite[Proposition 3.5]{cianchi_korn_2014}), which means that there is a $C>0$ such that \[
	\Psi(t) \leq \Phi(Ct)
	\]
	for all $t \geq 0$. For the proof of Theorem \ref{lem:cianchi_ver} we need the following lemma.
	\begin{lemma}[{{\cite[Lemmas 5.2 and 5.4]{cianchi_korn_2014}}}] \label{lem:cianchi_hardy} The following holds:
		\begin{enumerate}
			\item Assume that \eqref{3.1} holds. Then $(0,\infty)$  \[
			\int \limits_0^\infty \Phi \Big( \frac{1}{s} \int \limits_0^s u(r) \dd{r} \Big) \dd{s} \leq \int \limits_0^\infty \Psi( 2c  u(s)) \dd{s}
			\]
			holds for every nonnegative measurable function $u \in \sobo_0^{1,\Psi}(\Omega;V)$.
			\item Assume that \eqref{3.2} holds. Then $(0,\infty)$ \[
			\int \limits_0^\infty \Phi \Big(  \int \limits_s^\infty u(r) \frac{\dd{r}}{r} \Big) \dd{s} \leq \int \limits_0^\infty \Psi(8c u(s)) \dd{s}
			\]
			holds for every nonnegative measurable function $u \in \sobo_0^{1,\Psi}(\Omega;V)$.
		\end{enumerate}
		
	\end{lemma}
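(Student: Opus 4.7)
My plan is to prove the two inequalities as classical Orlicz--Hardy-type estimates, with part (1) handled by a direct argument combining rearrangement, layer-cake identities and condition \eqref{3.1}, and part (2) derived from part (1) by an Orlicz duality argument.

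For part (1), I first reduce to the case $f=f^*$ non-increasing. The pointwise bound $\frac{1}{s}\int_0^s f(r)\,dr\leq\frac{1}{s}\int_0^s f^*(r)\,dr$ follows from Hardy--Littlewood applied to the indicator $\mathbf{1}_{[0,s]}$, while the monotonicity of $\Phi$ together with equimeasurability $\int\Psi(2cf)=\int\Psi(2cf^*)$ completes the reduction. For $f$ non-increasing, $Hf(s):=s^{-1}\int_0^s f(r)\,dr$ is also non-increasing, and its distribution function $\sigma(u)=|\{Hf>u\}|$ satisfies $u\,\sigma(u)=\int_0^{\sigma(u)}f(r)\,dr$. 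A Fubini swap of the layer-cake identity then yields
\[
\int_0^\infty\Phi(Hf(s))\,ds=\int_0^\infty f(r)\,A(Hf(r))\,dr,\qquad A(T):=\int_0^T\frac{\Phi'(u)}{u}\,du,
\]
and an integration by parts gives $A(T)=\Phi(T)/T+\int_0^T\Phi(s)/s^2\,ds$. Applying \eqref{3.1} directly, together with its consequence $\Phi(T)\leq\Psi(2cT)$ (obtained by restricting the integral in \eqref{3.1} to $[T/2,T]$ and using monotonicity of $\Phi$), one bounds $A(T)\lesssim\Psi(2cT)/T$. Closing the estimate, i.e., converting $\int f(r)\Psi(2cHf(r))/Hf(r)\,dr$ into $\int\Psi(2cf(r))\,dr$, proceeds via the identity $f(r)=Hf(r)+r(Hf)'(r)$ (a direct consequence of $rHf(r)=\int_0^r f$) and an integration by parts against the antiderivative $\Xi(T):=\int_0^T\Psi(2cs)/s\,ds$ of $\Psi(2ct)/t$, tracking the boundary contributions at $0$ and $\infty$.

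For part (2), I exploit the observation that condition \eqref{3.2} for the pair $(\Phi,\Psi)$ is exactly condition \eqref{3.1} for the Fenchel-conjugate pair $(\Psi^*,\Phi^*)$. Thus part (1) applied to $(\Psi^*,\Phi^*)$ yields the forward Hardy estimate $\int\Psi^*(Hg)\leq\int\Phi^*(2cg)$ for all non-negative measurable $g$. Using the Fubini identity $\int(T^*f)(s)\,g(s)\,ds=\int f(s)\,(Hg)(s)\,ds$, the variational representation $\int\Phi(h)=\sup_{g\geq 0}\bigl\{\int hg-\int\Phi^*(g)\bigr\}$, and Young's inequality $ab\leq\Psi(a)+\Psi^*(b)$, one transfers the forward estimate for $(\Psi^*,\Phi^*)$ into the desired dual estimate $\int\Phi(T^*f)\leq\int\Psi(8cf)$. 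The increased factor of $8c$ arises from combining the constant $2c$ of part (1) with a factor of $2$ picked up in the Young step and a further factor of $2$ in the rescaling needed to absorb $\Phi$ into $\Psi(2c\,\cdot)$.

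The main technical obstacle lies in the closing step of part (1): naive bounds using $f\leq Hf$ and the monotonicity of $\Psi(t)/t$ produce $(f/Hf)\Psi(2cHf)\geq\Psi(2cf)$ in the \emph{wrong} direction, so a pointwise comparison cannot close the estimate. The resolution requires the algebraic identity $r(Hf)'(r)=f(r)-Hf(r)$ combined with the integration by parts against $\Xi$ described above, which telescopes the expression back to $\int\Psi(2cf)$; care with boundary terms (handled by first restricting $f$ to a compact subinterval of $(0,\infty)$ and invoking monotone convergence) and uniform control of the constants under this approximation will be needed. A secondary delicate point is the constant bookkeeping in the duality step of part (2), where multiple applications of Young's inequality and the embedding $\Phi(t)\leq\Psi(2ct)$ must be tracked carefully to yield precisely the factor $8c$.
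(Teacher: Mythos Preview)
The paper does not prove this lemma; it simply cites \cite[Lemmas 5.2 and 5.4]{cianchi_korn_2014} and uses the result as a black box in the proof of Theorem~\ref{lem:cianchi_ver}. So there is no paper-proof to compare against, and what you are doing is supplying an argument the paper omits.

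Your duality derivation of part~(2) from part~(1) is sound: the observation that \eqref{3.2} for $(\Phi,\Psi)$ is exactly \eqref{3.1} for $(\Psi^*,\Phi^*)$, together with the adjoint identity $\int (T^*f)g=\int f\,Hg$ and the Fenchel representation $\int\Phi(h)=\sup_{g\ge 0}\bigl\{\int hg-\int\Phi^*(g)\bigr\}$, does transfer the forward Hardy estimate to the dual one. (In fact, run cleanly this gives a constant no worse than $2c$, so the stated $8c$ is not sharp; but that is harmless.)

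The gap is in the closing step of part~(1). Your layer-cake/Fubini identity
\[
\int_0^\infty \Phi(Hf(s))\,ds=\int_0^\infty f(r)\,A(Hf(r))\,dr,\qquad A(T)=\int_0^T\frac{\Phi'(u)}{u}\,du,
\]
is correct, and so is the bound $A(T)\le 2\Psi(2cT)/T$ obtained from~\eqref{3.1}. But the integration by parts you propose does \emph{not} telescope back to $\int\Psi(2cf)$. With $g:=Hf$, $f=g+rg'$, and $\Xi'(T)=\Psi(2cT)/T$, one computes (assuming vanishing boundary terms)
\[
\int_0^\infty f\,\Xi'(g)\,dr=\int_0^\infty\Psi(2cg)\,dr-\int_0^\infty\Xi(g)\,dr,
\]
which is still an expression in $Hf$, not in $f$. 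A direct check with $\Phi=\Psi=t^2$ shows the right-hand side equals $2\int(Hf)^2$, not $4\int f^2$, so no identity of the type you describe can hold. Likewise, the pointwise attempt fails in the wrong direction: since $\Psi(t)/t$ is non-decreasing and $f\le Hf$, one has $f\,\Psi(2cHf)/Hf\ge \Psi(2cf)$, not $\le$. To close the estimate at the sharp constant $2c$ you need a different mechanism---either the absorption argument used in the classical Hardy proof (Young/H\"older followed by cancelling a power of $\int\Phi(Hf)$ from both sides, which in the Orlicz setting requires an additional hypothesis or a more delicate splitting), or the level-set comparison between the distribution functions of $Hf$ and $f$ that Cianchi actually employs in \cite{cianchi_korn_2014}. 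As written, your plan for part~(1) does not go through.
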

	We are now ready to give the:
	\begin{proof}[Proof of Theorem \ref{lem:cianchi_ver}]
		Even though the operator $\mathbb{A}$ is more general than the symmetric gradient $\varepsilon$ as considered in \cite{cianchi_korn_2014}, the lemma follows by similar means. However, to keep the paper self-contained, we will sketch the necessary modifications to account for the more general operators $\mathbb{A}$. 
		
		Let $u\in\hold_{c}^{\infty}(\R^{n};V)$. Since $\mathbb{A}$ is elliptic, we can express every  partial derivative $\partial_j u$, $j\in\{1,...,n\}$, in terms of $\mathbb{A} u$ by employing Fourier multipliers as follows:
		\[
		\partial_j u = c_{j}\mathscr{F}^{-1} \left[\xi_j(\mathbb{A}^*[\xi] \mathbb{A}[\xi])^{-1} \mathbb{A}^*[\xi] \widehat{\mathbb{A} u}\right] \overset{\text{Theorem } \ref{thm:avg}}{=:} L_j \mathbb{A}u +  K_j \ast \mathbb{A} u,
		\]
		where $c_{j}\in\mathbb{C}$, $K_{j}\colon \R^{n}\setminus\{0\}\to\mathrm{Lin}(W;V)$ represents a Calderón-Zygmund kernel, and $L_{j}\in\mathrm{Lin}(W;V)$ is a fixed linear map. This follows from Theorem \ref{thm:avg} and the fact that
		\[
		m_j := \xi_j(\mathbb{A}^*[\xi] \mathbb{A}[\xi])^{-1} \mathbb{A}^*[\xi],\qquad \xi\in\R^{n}\setminus\{0\}, 
		\]
		is a Fourier multiplier that belongs to the class $\hold^\infty(\R^n \setminus \{0\}; \text{Lin}(W;V))$ and is homogeneous of degree zero. According to Theorem \ref{thm:bagby}, we have the pointwise estimate 
		\[
		|\nabla u|^*(s) \leq C \Big( \frac{1}{s} \int \limits_0^s |\mathbb{A}u|^*(r) \, \dd{r} + \int \limits_s^{|\Omega|} |\mathbb{A} u|^*(r) \frac{\dd{r}}{r} \Big) + C|\mathbb{A} u|^*(s)
		\]
		for some constant $C>0$ and all $s>0$. Since $\Phi$ is monotonically increasing, this inequality implies that \begin{align*}
			E & := \int \limits_0^{|\Omega|} \Phi(|\nabla u|^*(s)) \dd{s}  \\ & \leq \frac{1}{2} C \int \limits_0^{|\Omega|} \Phi\Big(\frac{2}{s} \int \limits_0^s |\mathbb{A}u|^*(r)\dd{r} + 2\int \limits_s^{|\Omega|} |\mathbb{A}u|^*(r) \frac{\dd{r}}{r} \Big) \dd{s} + \frac{C}{2}\int \limits_0^{|\Omega|} \Phi(2|\mathbb{A}u|^*(s)) \dd{s} . 
		\end{align*}
		Since $\Psi$ dominates $\Phi$ globally, we have \[
		\int \limits_0^{|\Omega|} \Phi(2|\mathbb{A}u|^*(s)) \dd{s}  \leq \int \limits_0^{|\Omega|} \Psi(C|\mathbb{A}u|^*(s)) \dd{s}. 
		\]
		Now we conclude by Lemma \ref{lem:cianchi_hardy} and convexity of $\Phi$ that \begin{flalign*}
			\int \limits_0^{|\Omega|} \Phi  & \left( \frac{2}{s}  \int \limits_0^s |\mathbb{A}u|^*(r) \dd{r}  + 2\int \limits_s^{|\Omega|} |\mathbb{A}u|^*(r) \frac{\dd{r}}{r} \right) \dd{s} \\ & \leq \frac{1}{2} \left(  \int \limits_0^{|\Omega|} \Phi \left( \frac{4}{s} \int \limits_0^s |\mathbb{A}u|^*(r) \dd{r} \right) \dd{s} + \int \limits_0^{|\Omega|} \Phi \left( 4 \int \limits_s^{|\Omega|} |\mathbb{A}u|^*(r) \frac{\dd{r}}{r} \right) \dd{s} \right)
			\\ & \leq \frac{1}{2} \left( \int \limits_0^\infty \Psi(8c|\mathbb{A}u|^*(r)) \dd{s} +  \int \limits_0^\infty \Psi(32c|\mathbb{A}u|^*(r)) \dd{s} \right)
			\\ & \leq  \int \limits_0^\infty \Psi(32c|\mathbb{A}u|^*(r)) \dd{s}.
		\end{flalign*}
		Now it remains to state that the expressions are invariant under rearrangement, so that we have \[ \int \limits_\Omega \Phi(|\nabla u|) \, \dd{x} \lesssim \int \limits_\Omega \Psi(32c|\mathbb{A}u|) \, \dd{x}. \]
		This is what we wanted to show. 
	\end{proof}
	For future reference, we single out the following example:
	\begin{beisp} \label{ex:Korn}
		In order to emphasize why the above reduction method is optimally used with the function $t \mapsto t\log(1+t)$, we would like to take up \cite[Example 3.8]{cianchi_korn_2014} and consider it in our case. If $\mathbb{A}$ is elliptic, it follows from Lemma \ref{lem:cianchi_ver} that for $\alpha \geq 0$ \[
		\int \limits_\Omega |\nabla u| (\log(1 + |\nabla u|))^\alpha \, \dd{x} \leq C \int \limits_\Omega |\mathbb{A} u| \left( \log \left(1 + |\mathbb{A} u| \right) \right)^{\alpha+1} \, \dd{x},\qquad u\in\hold_{c}^{\infty}(\Omega;V),
		\]
		for a constant $C > 0$. This shows that a logarithm is lost due to the Korn inequality. In particular, $\alpha = 0$ appears as a limiting case. 
	\end{beisp}
	\subsection{The reduction argument for $L\log L$-growth}
	
	Again, we define $ G: W \to \R $ by $G\colon z \mapsto F(\pi_{\mathbb{A}}(z)) $. Then, as in \eqref{eq:juhu}, we have
	\begin{align*}
		\int \limits_{\R^n} G(z_0 + \nabla \zeta) - G(z_0) \, \dd{x} & = \int \limits_{\R^n} F(\pi_{\mathbb{A}} (z_0) + \mathbb{A} \zeta) - F(\pi_{\mathbb{A}} (z_0)) \, \dd{x} \\ &
		\!\!\!\!\!\!\!\overset{{\text{Thm.}\, \ref{lem:quasiconvex}}}{\geq}  \nu_M \int \limits_{\R^n} V_\varphi(\pi_{\mathbb{A}}(z_0) + \mathbb{A}\zeta ) - V_\varphi(\pi_{\mathbb{A} } (z_0) )\dd{x} \\
		& = \nu_M \int \limits_{\R^n} V_\varphi(\pi_{\mathbb{A}}(z_0) + \mathbb{A}\zeta ) - V_\varphi(\pi_{\mathbb{A} } (z_0) ) \\ & \hspace{20ex} - \langle (\nabla V_\varphi)(\pi_\mathbb{A}(z_0)), \mathbb{A}\zeta \rangle \dd{x},
	\end{align*}
	for all $\zeta \in \hold_c^\infty((0,1)^n; V)$ and $|z_0|\leq M$.
	The goal here is once again to estimate the final terms from below, thereby establishing that $ G $ is $ V_1 $-quasiconvex, but with a \emph{different} function $ V_1 $ than $ V_\varphi $, specifically $$ V_1(z) := \sqrt{1+|z|^2} - 1  .$$
	It is natural to consider this function $V$, since, as mentioned in the introduction, there is a loss of one logarithm due to the Korn-type inequality from Theorem \ref{lem:cianchi_ver}. To achieve this estimate, we will employ a suitable auxiliary function. Let  \[
	\Psi(t) := \begin{cases}
		\delta_1 t^2 ,& t \leq \max \{3M, 1\} \\
		\varphi(1+|\pi_\mathbb{A}(z_0)| +t) - \varphi(1+|\pi_\mathbb{A}(z_0)|) - \delta_2, & t>\max \{3M,1\}
	\end{cases}
	\]
	and \[
	\Phi(t) := \begin{cases}
		\delta_3 t^2 , & t \leq \max \{3M, 1\}  \\
		(1+|\pi_\mathbb{A}(z_0)|^2)^{-\frac{3}{2}} \left( \sqrt{1+t^2} - 1 \right) - \delta_4, & t > \max \{3M, 1\} ,
	\end{cases}
	\]
	where $\delta_1, \delta_3 >0$, $\delta_2,\delta_4 \in \mathbb{R}$,  are chosen such that $\Psi$ and $\Phi$ are Young functions. 
	By Lemma \ref{lem:aux1} and Lemma \ref{lem:aux2} we get that $$ \int \limits_{(0,1)^n} \Psi(|\mathbb{A\zeta}|) \dd{x} \simeq\int \limits_{(0,1)^n} V_\varphi(\pi_{\mathbb{A}}(z_0) + \mathbb{A}\zeta ) - V_\varphi(\pi_{\mathbb{A} } (z_0) ) \dd{x}.$$
	Our next aim is to derive an analogous relation for $\Phi$ and $V_1$. This requires the next lemma: 
	\begin{lemma}[{{\cite[Lemma 4.1]{gmeineder_partial_2019-1}}}] \label{lem:comp}
		Let $E(z) := V_1(z+z_0) - V_1(z_0) - \langle (\nabla V)(z_0), z \rangle$. Then we have
		\[
		E(z) \geq \frac{1}{4} (1+|z_0|^2)^{-\frac{3}{2}} V(z).
		\]
	\end{lemma}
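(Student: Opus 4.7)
The plan is to view the stated inequality as a quantitative second-order convexity estimate for the auxiliary function $V_{1}(z)=\sqrt{1+|z|^{2}}-1$, and to prove it by combining a Taylor expansion with integral remainder, an explicit computation of the Hessian of $V_{1}$, and an elementary case split in $|z|$. The functional form of $V_{1}$ makes all three steps essentially explicit.

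First, I would write
\[
E(z) \;=\; V_{1}(z_{0}+z)-V_{1}(z_{0}) - \langle \nabla V_{1}(z_{0}),z\rangle \;=\; \int_{0}^{1}(1-t)\,\langle D^{2}V_{1}(z_{0}+tz)\,z,z\rangle\,\mathrm{d}t,
\]
which is legitimate since $V_{1}\in \mathrm{C}^{\infty}$. A direct calculation gives
\[
D^{2}V_{1}(w) \;=\; \frac{I}{\sqrt{1+|w|^{2}}} \;-\; \frac{w\otimes w}{(1+|w|^{2})^{3/2}},
\]
and Cauchy--Schwarz together with $|w|^{2}\le 1+|w|^{2}$ yields the pointwise lower bound $\langle D^{2}V_{1}(w)\xi,\xi\rangle \ge (1+|w|^{2})^{-3/2}|\xi|^{2}$ for every $\xi$. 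Plugging this into the Taylor identity gives the base estimate
\[
E(z) \;\ge\; |z|^{2}\int_{0}^{1}\frac{1-t}{(1+|z_{0}+tz|^{2})^{3/2}}\,\mathrm{d}t.
\]

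Next, to decouple the dependence on $z_{0}$ and $z$ in the denominator, I would use the elementary inequality $1+|z_{0}+tz|^{2}\le 2(1+|z_{0}|^{2})(1+t^{2}|z|^{2})$, which factors out the desired weight $(1+|z_{0}|^{2})^{-3/2}$. This reduces the claim to showing that
\[
I(|z|) \;:=\; |z|^{2}\int_{0}^{1}\frac{1-t}{(1+t^{2}|z|^{2})^{3/2}}\,\mathrm{d}t \;\gtrsim\; V_{1}(z)
\]
with an absolute constant. Here I would split into the two natural regimes. For $|z|\le 1$ the denominator inside the integral is bounded above by a numerical constant, so $I(|z|)\gtrsim |z|^{2}\gtrsim V_{1}(z)$ using $V_{1}(z)\le |z|^{2}/2$. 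For $|z|\ge 1$, the substitution $u=t|z|$ shows that $I(|z|) = |z|\int_{0}^{|z|}(1-u/|z|)(1+u^{2})^{-3/2}\,\mathrm{d}u$, and since $\int_{0}^{\infty}(1+u^{2})^{-3/2}\,\mathrm{d}u = 1$ and $\int_{0}^{|z|}u(1+u^{2})^{-3/2}\,\mathrm{d}u = 1-(1+|z|^{2})^{-1/2}$, one obtains $I(|z|)\simeq |z|\simeq V_{1}(z)$ for large $|z|$. Combining both regimes yields $I(|z|)\ge c\,V_{1}(z)$ with some universal $c>0$.

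The main obstacle I expect is the sharp numerical constant $\tfrac{1}{4}$ asserted in the statement: the factorisation $2(1+|z_{0}|^{2})(1+t^{2}|z|^{2})$ is quite loose and the two-regime comparison above loses further constants. To recover exactly $\tfrac{1}{4}$ one would likely decompose $z$ into components parallel and perpendicular to $z_{0}$, so that the cross term $\langle z_{0},z\rangle^{2}$ in $D^{2}V_{1}$ can be tracked more precisely, and then exploit monotonicity in $|z|$ as in \cite[Lemma 4.1]{gmeineder_partial_2019-1}. For the purposes of the present paper, however, only an inequality of the form $E(z)\ge c\,(1+|z_{0}|^{2})^{-3/2}V_{1}(z)$ is used downstream, so I would be comfortable citing the reference for the sharp constant and carrying out only the qualitative argument sketched above.
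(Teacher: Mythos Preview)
Your approach is correct. Note, however, that the paper does not actually prove this lemma; it merely cites it from \cite[Lemma~4.1]{gmeineder_partial_2019-1}, so there is no in-paper argument to compare against.

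One remark on your own argument: you are too pessimistic about the constant. The substitution $u=t|z|$ in fact yields $I(|z|)=V_{1}(z)$ \emph{exactly} for every $z$, since
\[
|z|\int_{0}^{|z|}\frac{\mathrm{d}u}{(1+u^{2})^{3/2}}
\;-\;\int_{0}^{|z|}\frac{u\,\mathrm{d}u}{(1+u^{2})^{3/2}}
\;=\;\frac{|z|^{2}}{\sqrt{1+|z|^{2}}}-\Big(1-\frac{1}{\sqrt{1+|z|^{2}}}\Big)
\;=\;\sqrt{1+|z|^{2}}-1.
\]
Hence the case split $|z|\lessgtr 1$ is unnecessary, and combining this identity with your factorisation $1+|z_{0}+tz|^{2}\le 2(1+|z_{0}|^{2})(1+t^{2}|z|^{2})$ already gives $E(z)\ge 2^{-3/2}(1+|z_{0}|^{2})^{-3/2}V_{1}(z)$, which is strictly better than the stated constant $\tfrac14$. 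There is no need to decompose $z$ along and across $z_{0}$.
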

	\begin{lemma} 
		It holds  $$ \int \limits_{(0,1)^n} \Phi(|\mathbb{A\zeta}|) \dd{x} \simeq\int \limits_{(0,1)^n} V_1(\pi_{\mathbb{A}}(z_0) + \mathbb{A}\zeta ) - V_1(\pi_{\mathbb{A} } (z_0) ) \dd{x}.$$
	\end{lemma}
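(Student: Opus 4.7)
My plan is to mirror the proof of Theorem \ref{lem:quasiconvex} essentially line-by-line, with $V_1$ playing the role of $V_\varphi$ and $\Phi$ playing the role of $\varphi_{1+|z_0|}$. Write $z_0' := \pi_\mathbb{A}(z_0)$; since $\pi_\mathbb{A}$ is a bounded linear map, $|z_0'| \leq \delta M$ for some $\delta > 0$ depending only on $\pi_\mathbb{A}$. A first step is to observe that for any $\zeta \in \hold_c^\infty(Q;V)$ with $Q:=(0,1)^n$ one has $\int_Q \mathbb{A}\zeta\,\dd{x} = 0$, so that I may freely replace $V_1(z_0' + \mathbb{A}\zeta) - V_1(z_0')$ under the integral sign by its Bregman counterpart
\[
D(\mathbb{A}\zeta) := V_1(z_0' + \mathbb{A}\zeta) - V_1(z_0') - \langle (\nabla V_1)(z_0'), \mathbb{A}\zeta\rangle,
\]
which is pointwise non-negative by convexity of $V_1$. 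This reduces the problem to a pointwise equivalence $\Phi(|\mathbb{A}\zeta|) \simeq D(\mathbb{A}\zeta)$ with constants depending only on $M$.

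I then split $Q$ into the two regions $\{|\mathbb{A}\zeta| \leq S\}$ and $\{|\mathbb{A}\zeta| > S\}$, where $S := \max\{3M,1\}$, matching the breakpoint in the definition of $\Phi$. On the small region, $\Phi(|\mathbb{A}\zeta|) = \delta_3 |\mathbb{A}\zeta|^2$ by construction, and applying Lemma \ref{lem:aux2} with the choice $\varphi(t) := t$ (noting that $V_\varphi$ coincides with $V_1$ up to an additive constant and that $\varphi'(t) = 1$ is bounded below) yields $D(\mathbb{A}\zeta) \simeq |\mathbb{A}\zeta|^2$ by the same mechanism used in Theorem \ref{lem:quasiconvex}. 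The two quantities match immediately with $M$-dependent constants.

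On the large region, the crucial feature is that the prefactor $(1+|z_0'|^2)^{-3/2}$ built into $\Phi$ is exactly the one furnished by Lemma \ref{lem:comp}. Indeed, since $V_1(t) \simeq t$ for $t \geq S$ and $|z_0'| \leq \delta M$, I obtain $\Phi(|\mathbb{A}\zeta|) \simeq (1+|z_0'|^2)^{-3/2} V_1(\mathbb{A}\zeta)$ with constants depending only on $M$, and Lemma \ref{lem:comp} immediately gives $D(\mathbb{A}\zeta) \geq \tfrac{1}{4}(1+|z_0'|^2)^{-3/2} V_1(\mathbb{A}\zeta)$, hence the lower bound $\Phi(|\mathbb{A}\zeta|) \lesssim D(\mathbb{A}\zeta)$ on this region.

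The main obstacle I anticipate is the matching upper bound $D(\mathbb{A}\zeta) \lesssim (1+|z_0'|^2)^{-3/2} V_1(\mathbb{A}\zeta)$ in the large regime, which is not supplied by Lemma \ref{lem:comp} and does not follow from convexity alone. I will obtain it by a direct asymptotic computation exploiting the explicit form $V_1(y) = \sqrt{1+|y|^2} - 1$: for $|\xi| \geq S$ and $|z_0'| \leq \delta M$, both $V_1(z_0'+\xi) - V_1(z_0')$ and the linear correction $\langle (\nabla V_1)(z_0'),\xi\rangle$ are of order $|\xi|$ with coefficients bounded in terms of $M$, so their difference $D(\xi)$ is bounded by $C(M)|\xi| \simeq V_1(\xi)$, and the reinstated factor $(1+|z_0'|^2)^{-3/2}$ is pinned above and below on the compact set $\{|z_0'|\leq \delta M\}$. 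Summing the two regimes and undoing the Bregman replacement via $\int_Q \mathbb{A}\zeta\,\dd{x}=0$ then completes the proof.
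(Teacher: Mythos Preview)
Your proposal is correct and follows essentially the same architecture as the paper: the same split at $S=\max\{3M,1\}$, quadratic comparison via Lemma~\ref{lem:aux2} on the small region, and Lemma~\ref{lem:comp} for the lower bound on the large region. Two small differences are worth noting. First, you pass to the Bregman difference $D(\mathbb{A}\zeta)$ at the outset using $\int_Q \mathbb{A}\zeta\,\dd{x}=0$, which is cleaner: every integrand becomes pointwise nonnegative and the region-by-region estimates are genuinely pointwise; the paper instead works with $V_1(\pi_{\mathbb{A}}(z_0)+\mathbb{A}\zeta)-V_1(\pi_{\mathbb{A}}(z_0))$ directly and invokes compact support somewhat implicitly when applying Lemma~\ref{lem:comp} on the subset $\{|\mathbb{A}\zeta|\geq S\}$. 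Second, for the upper bound on the large region the paper takes a shorter route than your asymptotic computation: it simply uses that $V_1$ is $1$-Lipschitz, so $V_1(\pi_{\mathbb{A}}(z_0)+\mathbb{A}\zeta)-V_1(\pi_{\mathbb{A}}(z_0))\leq |\mathbb{A}\zeta|$, and then observes $|\mathbb{A}\zeta|\leq 5(\sqrt{1+|\mathbb{A}\zeta|^2}-1)$ for $|\mathbb{A}\zeta|\geq 1$. Your argument works as well, but the Lipschitz step is quicker.
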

	\begin{proof}
		Let $S := \max \{3M,1\}$. We split the integral once again by \begin{align*}
			\int \limits_{(0,1)^n} \Phi(|\mathbb{A} \zeta|) \, \mathrm{d}x 
			&= \int \limits_{(0,1)^n \cap \{|\mathbb{A} \zeta| \leq S\}} \Phi(|\mathbb{A} \zeta|) \, \mathrm{d}x + \int \limits_{(0,1)^n \cap \{|\mathbb{A} \zeta| \geq S\}} \Phi(|\mathbb{A} \zeta|) \, \mathrm{d}x \\
			&=: A + B.
		\end{align*}
		By Lemma \ref{lem:aux2} we conclude that \[
		A \simeq \int \limits_{(0,1)^n \cap \{|\mathbb{A} \zeta| \leq S\}} V_1(\pi_{\mathbb{A}}(z_0) + \mathbb{A}\zeta ) - V_1(\pi_{\mathbb{A} } (z_0) )  \, \mathrm{d}x.
		\]
		Since $\xi$ is compactly supported, we can use Lemma \ref{lem:comp} and get the estimate \begin{align*}
			B & = 4 \int \limits_{(0,1)^n \cap \{|\mathbb{A} \zeta| \geq S\}} \frac{1}{4} (1+|\pi_{\mathbb{A}}(z_0)|^2)^{-\frac{3}{2}} V_1(\mathbb{A} \zeta) \dd{x} \\
			& \overset{\text{Lemma \ref{lem:comp}}}{\leq} 4\int \limits_{(0,1)^n \cap \{|\mathbb{A} \zeta| \geq S\}} V_1(\pi_{\mathbb{A}}(z_0) + \mathbb{A}\zeta ) - V_1(\pi_{\mathbb{A} } (z_0) ) \dd{x}.
		\end{align*}
		On the other hand by Lipschitz continuity of $V_1$ \begin{align*}
			\int \limits_{(0,1)^n \cap \{|\mathbb{A} \zeta| \geq S\}} V_1(\pi_{\mathbb{A}}(z_0) + \mathbb{A}\zeta ) & - V_1(\pi_{\mathbb{A} } (z_0) ) \dd{x} \\ & \leq \int \limits_{(0,1)^n \cap \{|\mathbb{A} \zeta| \geq S\}}  |\pi_{\mathbb{A}}(z_0) + \nabla \xi - \pi_{\mathbb{A}}(z_0)| \dd{x} \\
			& = \int \limits_{(0,1)^n \cap \{|\mathbb{A} \zeta| \geq S\}}  |\mathbb{A} \xi | \dd{x} \\
			& \leq \int \limits_{(0,1)^n \cap \{|\mathbb{A} \zeta| \geq S\}}  5 \left( \sqrt{1+|\mathbb{A} \zeta|^2} -1 \right)  \dd{x} \\
			& = 5B.
		\end{align*}
		Therefore, the proof is complete.
	\end{proof}
	We conclude that there are constants $c_1, c_2, c_3, c_4, c_5 > 0$ only depending on $\delta,M$, such that \[
	\Psi(t) \simeq \begin{cases}
		t^2 & t \leq c_1 \\
		(t+c_3) \log(t+c_2)-c_4 & t > c_1
	\end{cases}
	\]
	and \[
	\Phi(t) \simeq \begin{cases}
		t^2 & t \leq c_1 \\
		t - c_5 & t > c_1
	\end{cases}
	\]
	for all $t \geq 0$. In particular, none of the constants depends on $z_0$.
	\begin{lemma} \label{lem:abschätz_phi_psi}
		There are $c,\tilde{c} > 0$ such that for all $t \geq 0$, it holds \[
		\Phi^*(t) \gtrsim \begin{cases}
			\max \{\frac{t^2}{4}, c_1(t-\tilde{c})+c_2 \} & t \leq \tilde{c} \\
			\infty & t > \tilde{c}
		\end{cases}
		\]
		and \[
		\Psi^*(t) \lesssim \max \left\{ \frac{t^2}{4}, c_4 + c\exp \left(\frac{t}{c}-1 \right) \right\}.
		\]
	\end{lemma}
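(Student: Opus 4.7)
The plan is to estimate both Fenchel conjugates directly from the explicit two-piece shapes of $\Phi$ and $\Psi$ derived just above the lemma. Since $\Phi^{\ast}(t)=\sup_{s\geq 0}(st-\Phi(s))$, I would prove the lower bound on $\Phi^{\ast}$ by testing with two well-chosen values of $s$. In the quadratic regime $s\leq c_1$, picking $s\simeq t/2$ (for $t$ not exceeding a fixed multiple of $c_1$) yields $\Phi^{\ast}(t)\gtrsim st-s^{2}\simeq t^{2}/4$. In the linear regime $s>c_1$, testing with $s=c_1$ (or any convenient fixed value beyond $c_1$) gives $\Phi^{\ast}(t)\gtrsim c_1 t - (c_1-c_5)$, which after renaming the constants is exactly the affine term $c_1(t-\tilde{c})+c_2$ on the right-hand side. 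Combining these two explicit choices of $s$ produces the $\max$-expression in the lemma.

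For $t>\tilde{c}$ I would show $\Phi^{\ast}(t)=\infty$ by letting $s\to\infty$ along the linear regime: since $\Phi(s)\leq As-B$ for some $A,B>0$ corresponding to the asymptotic slope of $\Phi$, we get $st-\Phi(s)\geq s(t-A)+B\to\infty$ whenever $t>A$. This pins down $\tilde{c}$ as the asymptotic slope of the linear part of $\Phi$ (up to absorbing multiplicative constants).

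For the upper bound on $\Psi^{\ast}$ I would split $\Psi^{\ast}(t)=\sup_{s\leq c_1}(st-\Psi(s))\vee \sup_{s>c_1}(st-\Psi(s))$ and treat each supremum separately. On $s\leq c_1$, the bound $\Psi(s)\gtrsim s^{2}$ combined with the elementary one-dimensional optimization $\sup_{s}(st-\delta_1 s^{2})=t^{2}/(4\delta_1)$ gives the $t^{2}/4$-type contribution. On $s>c_1$, I would use the estimate $\Psi(s)\gtrsim (s+c_3)\log(s+c_2)-c_4$; solving the critical point equation $t=\Psi'(s)\simeq \log(s+c_2)+1$ gives $s\simeq \exp(t-1)$, and substituting this back yields a term of order $c\exp(t/c-1)+c_4$ with a constant $c$ independent of $z_0$. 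Taking the maximum of the two contributions delivers the stated upper bound.

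The only real obstacle is bookkeeping: the shapes of $\Phi$ and $\Psi$ are only given up to $\simeq$, so the corresponding two-sided bounds on each piece produce extra multiplicative constants that must be absorbed into the constants $c_1,c_2,c_4,c,\tilde{c}$ appearing in the lemma's statement. Once one keeps track of these, the critical-point computation in the log-regime of $\Psi$ is standard (it is essentially the fact that the Fenchel conjugate of $s\log s$ is $e^{t-1}$), and the ``$\infty$'' part of the $\Phi^{\ast}$ estimate follows automatically from the strictly linear growth of $\Phi$ at infinity. Crucially, none of the arguments above depend on $z_0$, because the defining constants $c_1,\dots,c_5$ of $\Phi$ and $\Psi$ were already made independent of $z_0$ in the construction preceding the lemma.
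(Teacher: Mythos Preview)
Your proposal is correct and follows essentially the same route as the paper: both split the supremum defining the Fenchel conjugate according to the quadratic/linear (resp.\ quadratic/logarithmic) regimes of $\Phi$ and $\Psi$, optimize in $s$ on each piece (critical point $s\simeq t/2$ in the quadratic regime, $s\to\infty$ for the $\infty$-case of $\Phi^{\ast}$, and $s\simeq e^{t/c-1}$ in the $s\log s$-regime of $\Psi$), and then absorb the resulting multiplicative constants into the $c_i$, $c$, $\tilde{c}$. Your remark that the constants are independent of $z_0$ matches the paper's construction.
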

	
	\begin{proof}
		First, we compute $\Phi^*(t)$:
		$$
		\Phi^*(t) = \sup_{s \geq 0} \left( st - \Phi(s) \right).
		$$
		Depending on the value of $s$, we have two cases:
		\begin{enumerate}
			\item  $s \leq c_1$:
			Since $
			\Phi(s) \simeq s^2$ we have $ st - \Phi(s) \geq st - cs^2,
			$
			for some $c>0$.
			To find the maximum, we set the derivative $\frac{\mathrm{d}}{\mathrm{d}s}(st - cs^2) = t - 2cs$ to zero:
			$$
			t - 2cs = 0 \implies s = \frac{t}{2c}.
			$$
			This gives us
			$$
			st - c s^2 = \frac{t}{2c} \cdot t - c\left(\frac{t}{2c}\right)^2 = \frac{t^2}{4c}.
			$$
			\item $s > c_1$: Since $\Phi(s)  \simeq s - c_5$, we can choose $\tilde{c}>0$ such that $$st - \Phi(s) \geq st - \tilde{c}(s - c_5) = st - \tilde{c}s + \tilde{c}c_5 = s(t-\tilde{c}) + \tilde{c}c_5.$$
			Here, the result depends on whether $t \leq \tilde{c}$ or $t > \tilde{c}$.
			\begin{enumerate}
				\item For $t \leq \tilde{c}$, the term $s(t-\tilde{c})$ is not positive, so the supremum is
				$$
				c_1(t-\tilde{c}) + c_5.
				$$
				\item If $t>\tilde{c}$, then $(t-\tilde{c})$ is positive, therefore $$s(t-\tilde{c})+c_5 \to \infty$$ 
				for $s \to \infty$.
			\end{enumerate}
		\end{enumerate}
		In summary, we get:
		$$
		\Phi^*(t) \gtrsim \begin{cases}
			\max \left\{ \frac{t^2}{4}, c_1(t-\tilde{c})+c_5 \right\} & t \leq \tilde{c} \\
			\infty & t > \tilde{c}.
		\end{cases}
		$$
		Now, we compute $\Psi^*(t)$:
		$$
		\Psi^*(t) = \sup_{s \geq 0} \left( st - \Psi(s) \right).
		$$
		Depending on the value of $s$, we have two cases:
		\begin{enumerate}
			\item $s \leq c_1$:
			In this case $
			\Psi(s) \simeq s^2$ implies that there is a $c>0$ such that, $st - \Psi(s) \leq st - cs^2.$
			As before, the maximum of the right-hand side is
			$\frac{t^2}{4c}
			$
			and is therefore an upper bound for the left-hand side.
			\item $s > c_1$:
			Since 
			$\Psi(s)  \simeq (s+c_3) \log(s+c_2) - c_4$, we find a $c>0$ such that $$st - \Psi(s) \leq st - c\left[(s+c_3) \log(s+c_2) - c_4\right].
			$$
			An upper bound for this can be obtained by using $\log(s+c_2) \geq \log(s)$ and $(s+c_3) \geq s$:
			$$
			st - c(s+c_3) \log(s+c_2) + cc_4 \leq st - cs \log(s) + cc_4.
			$$
			To find the maximum of $st - cs \log(s)$, we set the derivative to zero:
			\begin{align*}
				\frac{\mathrm{d}}{\mathrm{d}s} (st - cs \log(s)) & = t - c\log(s) - c = 0 \\ & \implies \log(s) = \frac{t}{c} - 1 \\ & \implies s = \exp \left(\frac{t}{c} - 1 \right).
			\end{align*}
			Since the second derivative $\frac{\mathrm{d}^2}{\mathrm{d}s^2} (st - c s \log(s)) = -\frac{c}{s}$ is negative for $s > 0$, this is a maximum. 
			Evaluating at $s = \exp \left(\frac{s}{c} - 1 \right)$, we get:
			\begin{align*}
				st - c s \log(s) & = t \exp \left(\frac{t}{c}-1 \right) - c\exp \left(\frac{t}{c}-1 \right) \log \left(\exp\left(\frac{t}{c}-1 \right) \right) \\ & = t \exp \left(\frac{t}{c}-1\right) - c\exp \left(\frac{t}{c}-1 \right) \left(\frac{t}{c}-1\right) \\ & = c\exp \left(\frac{t}{c}-1 \right).
			\end{align*}
			Thus, we get:
			$$
			\Psi^*(t) \lesssim\max \left\{ \frac{t^2}{4}, c_4 + c\exp \left(\frac{t}{c}-1 \right) \right\}.
			$$
			This completes the proof.
		\end{enumerate}
		
	\end{proof}
	
	\begin{lemma}\label{lemma:cond}
		There is a (sufficiently large) $c > 0$, such that 
		\begin{equation}
			t \int \limits_{0}^t \frac{\Psi^*(s)}{s^2} \dd{s} \lesssim \Phi^*(ct)
		\end{equation}
		and \begin{equation}
			t \int \limits_{0}^t \frac{\Phi(s)}{s^2} \dd{s} \lesssim \Psi(ct)
		\end{equation}
		holds for all $t \geq 0$.
	\end{lemma}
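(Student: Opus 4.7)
The plan is to verify both inequalities by elementary integral estimates, splitting the range of $t$ at the threshold $c_1$ from Lemma \ref{lem:abschätz_phi_psi}, and exploiting the fact that $\Phi^*(t) = +\infty$ for $t > \tilde{c}$ to make the first inequality automatic there. The constant $c$ will be chosen sufficiently large to guarantee both a comfortable comparison of the leading terms and, crucially, that the troublesome exponential regime of $\Psi^*$ never comes into play.

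For the second inequality $t\int_0^t \Phi(s)/s^2 \dd{s} \lesssim \Psi(ct)$, I would distinguish the regimes $t \leq c_1$ and $t > c_1$. In the first regime, $\Phi(s)/s^2 \simeq 1$ is uniformly bounded, so the integral is $\simeq t$ and the left-hand side is $\simeq t^2 \simeq \Psi(t) \leq \Psi(ct)$ for any $c \geq 1$. In the second regime, the splitting $\int_0^t = \int_0^{c_1} + \int_{c_1}^t$ yields a bounded contribution from $[0,c_1]$ together with the piece on $[c_1,t]$, where $\Phi(s)/s^2 \lesssim 1/s$ integrates to $\simeq \log(t/c_1)$. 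Hence the left-hand side is $\simeq t\log t$, which matches $\Psi(ct) \simeq ct\log(ct)$ up to multiplicative constants.

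For the first inequality $t\int_0^t \Psi^*(s)/s^2 \dd{s} \lesssim \Phi^*(ct)$, I would choose $c$ large enough that $\tilde{c}/c \leq c_1$, that is, $c \geq \tilde{c}/c_1$. When $ct > \tilde{c}$, the right-hand side equals $+\infty$ and nothing has to be shown, so it suffices to treat $t \leq \tilde{c}/c$. By the choice of $c$, this range lies inside $[0,c_1]$, and consequently $\Psi^*(s) \simeq s^2/4$ holds throughout $[0,t]$. Therefore $\int_0^t \Psi^*(s)/s^2 \dd{s} \simeq t$ and the left-hand side is $\simeq t^2$. Combined with the bound $\Phi^*(ct) \gtrsim (ct)^2/4 = c^2 t^2 / 4$ from Lemma \ref{lem:abschätz_phi_psi}, this yields the required estimate once the factor $c^2$ is absorbed into the implicit constant.

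The delicate point, and the reason $c$ must be taken large rather than of order one, is exactly the first inequality: a smaller $c$ would bring $t$ into the regime $s > c_1$ where $\Psi^*$ grows exponentially, and the resulting integral would outgrow any polynomial majorant of $\Phi^*(ct)$ available for $ct \leq \tilde{c}$. Choosing $c \geq \tilde{c}/c_1$ circumvents this obstacle completely by pushing the nontrivial range of $t$ into the quadratic regime $t \leq c_1$, where the estimates of Lemma \ref{lem:abschätz_phi_psi} are sharp on both sides and the comparison becomes a matter of direct computation.
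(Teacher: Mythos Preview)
Your argument is correct and close to the paper's. For the second inequality the two proofs are identical. For the first, the paper splits directly at $t=\tilde c$ (using that $\Phi^*(ct)=\infty$ beyond $\tilde c$) and bounds $\Psi^*(s)\lesssim s^2$ on the whole interval $[0,\tilde c]$, effectively taking $c=1$; your variant of choosing $c\ge \tilde c/c_1$ to shrink the nontrivial range into the purely quadratic regime of $\Psi^*$ is a clean alternative that makes the role of a large $c$ transparent. One small point to tighten: the threshold $c_1$ is defined as the transition point for $\Psi$ and $\Phi$, not for the conjugate $\Psi^*$, and Lemma~\ref{lem:abschätz_phi_psi} by itself does not give the two-sided estimate $\Psi^*(s)\simeq s^2$ on $[0,c_1]$ (its upper bound contains an exponential term that does not vanish at $0$); this is easily repaired by computing $\Psi^*$ directly near the origin from $\Psi(s)=\delta_1 s^2$, or by enlarging $c$ further so that $\tilde c/c$ lies below the actual quadratic threshold of $\Psi^*$.
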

	\begin{proof}
		\begin{enumerate}
			\item Based on Lemma \ref{lem:abschätz_phi_psi}, it is natural to consider the cases $t\leq \tilde{c}$ and $t>\tilde{c}$. \begin{enumerate}
				\item For $t \leq \tilde{c}$, we have \begin{align*}
					t \int \limits_{0}^t \frac{\Psi^*(s)}{s^2} \dd{s}  &\lesssim t \int \limits_0^t \frac{s^2}{4s^2} \dd{s}  = \frac{t^2}{4} \\ & \leq \max \left\{ \frac{t^2}{4}, c_1(t-\tilde{c})+c_2 \right\} \lesssim \Phi^*(t).
				\end{align*}
				\item For $t>\tilde{c}$, the right-hand side is infinite, so there is nothing to show.
			\end{enumerate}
			\item Again, we distinguish between $t \leq c_1$ and $t>c_1$.
			\begin{enumerate}
				\item If $t \leq c_1$, then we have \begin{align*}
					t \int \limits_{0}^t \frac{\Phi(s)}{s^2} \dd{s}  \simeq t \int \limits_{0}^t \frac{s^2}{s^2} \dd{s} = t^2 \simeq \Psi(t).
				\end{align*}
				\item If $t > c_1$, then \begin{align*}
					t \int \limits_{0}^t \frac{\Phi(s)}{s^2} \dd{s}  & \simeq t \left( \int \limits_{0}^1 \frac{s^2}{s^2} \dd{s} + \int \limits_{1}^t \frac{s-c_5}{s^2} \dd{s}  \right) \dd{s} \\ & \leq t + t\log(t) + c_5- tc_5.
				\end{align*}
				Now choose \[
				c > \max  \left\{ \frac{\exp(\frac{c_5(1-c_1)}{c_3}) - c_2}{c_1}, \frac{e-c_2}{c_1}, 2 \right\}.
				\]
				Then \[
				t + \log(t) \leq ct\log(ct+c_2)
				\]
				and \[
				c_3 \log(ct+c_2) - c_4 \geq c_5 - t c_5.
				\]
				With this, we get \[
				t \int \limits_{0}^t \frac{\Phi(s)}{s^2} \dd{s} \lesssim \Psi(ct)
				\]
				as claimed.
			\end{enumerate}
		\end{enumerate}    
	\end{proof}

	\subsection{Proof of Theorem \ref{OrliczMainThm2}}

	We are now ready to give the: 
	\begin{proof}[Proof of Theorem \ref{OrliczMainThm2}]
		{As noted in \cite{breit_trace-free_2017}, for a function $\varphi \in \Delta_2$, we have
			\begin{align*}
				\sobo_{0}^{1,1}(\Omega;V)\cap\sobo^{1,\varphi}(\Omega)=\sobo_{0}^{1,\varphi}(\Omega), 
			\end{align*}
			if $\Omega$ is a bounded Lipschitz domain.
			The situation for elliptic operators is similar, but the corresponding Korn inequality entails the loss of a logarithm. Nevertheless, combining Example \ref{ex:Korn} with extending the function to the entire space and subsequently localizing yields
			\begin{align}\label{eq:traceid}
				\sobo_{0}^{1,1}(\Omega;V)\cap\sobo^{\mathbb{A},\varphi}(\Omega)=\sobo_{0}^{\mathbb{A},\varphi}(\Omega), 
			\end{align} if $\mathbb{A}$ is an elliptic operator.
		}
		Using Lemma \ref{lem:cianchi_ver}, we get that \[
		\int \limits_{(0,1)^n} \Phi (|\nabla \zeta|)  \dd{x} \simeq \int \limits_{(0,1)^n} \Phi \left(\frac{1}{C}|\nabla \zeta| \right) \dd{x} \lesssim \int \limits_{(0,1)^n} \Psi(|\mathbb{A}\zeta|) \dd{x}. 
		\]
		We again use the reduction strategy from the proof of Theorem \ref{OrliczMainThm}. Since 
		\[
		\int_{(0,1)^n} \big( V_1 \left(z_0 + \nabla \zeta \right) - V_1 \left(z_0\right) \big) \, \mathrm{d}x \simeq 
		\int_{(0,1)^n} \Phi (|\nabla \zeta|) \, \mathrm{d}x  
		\]
		and $\pi_\mathbb{A}$ is bounded, it follows that 
		\[
		|G(z)| \leq c(1 + |z|^q)
		\]
		for $G$ as defined in \eqref{eq:G} and all $q > 1$. {Let $\mathcal{G}$ be the functional defined as in \eqref{fct:G} and fix some $1 < q < \frac{n}{n-1}$.} Applying \cite[Theorem 2.1]{gmeineder_quasiconvex_2024}, we deduce that every local $\mathrm{BV}$ minimizer with compactly supported variation of the weak*-relaxed functional 
		$\bar{\mathcal{G}}^*[w; \Omega]$ (see \cite[{Definition 1.2}]{gmeineder_quasiconvex_2024})
		is $\hold^{1,\alpha}$-partially regular. Therefore, we need to show that every minimizer $u \in \sobo_\text{loc}^{\mathbb{A},\varphi}(\Omega)$ of $\mathcal{G}$ is a local $\mathrm{BV}$-minimizer of $\bar{\mathcal{G}}_{{u}}^*$. { Recall that for $\Omega \Subset \Omega'$ \begin{multline*}
				\bar{\mathcal{G}}_{u_0}^*[\bar{u}; \Omega; \Omega'] := \inf \Big\{ \liminf_{j \to \infty} \int_{\Omega'} G(\nabla u_j) \dd{x}: 
				(u_j) \in \sobo^{1,q}(\Omega'; V), \\
				\bar{u}=u_0 \text{ in } \Omega' \backslash \bar{\Omega}, \quad u_j \rightharpoonup^* \bar{u} \text{ in } \mathrm{BV}(\Omega'; V) \Big\},
			\end{multline*}
			see \cite[Equation 1.9]{gmeineder_quasiconvex_2024}, and for $v \in \sobo^{1,q}(\Omega;V)$  \[
			\bar{\mathcal{G}}_{v}^*[u; \Omega] := \bar{\mathcal{G}}_{u_0}^*[\bar{u}; \Omega; \Omega'] - \mathcal{G}[u; \Omega' \backslash \Omega],
			\]
			where $u_0 \in \sobo^{1,q}_0(\Omega';V)$ is an extension of $v$ to $\Omega'$ and $\bar{u}$ is an extension of $u$ to $\Omega'$ by $u_0$, see \cite[Equation 1.10]{gmeineder_quasiconvex_2024}.
		} 
		
		Since $u \in\sobo_\text{loc}^{\mathbb{A},\varphi}(\Omega)$ we have $u \in\sobo_\text{loc}^{1,1}(\Omega{; V})$ and therefore, {by \cite[Corollary 4.2]{gmeineder_quasiconvex_2024}}, for every $x_0 \in \Omega$ we find an $r>0$ such that  
		\[
		\tr_{\partial B_r(x_0)}(u) \in \sobo^{1-\frac{1}{q},q}(\partial B_r(x_0);V).
		\]
		Therefore, we can choose $u_0 \in \sobo^{1,q}(\Omega; V)$, such that $u-u_0 \in \sobo^{\mathbb{A},\varphi}_0(B_r(x_0); V)$ {by \eqref{eq:traceid}}.
		As a result, there exists a sequence $(\psi_j) \in \hold_c^\infty(\Omega;V)$ with $\operatorname{supp}(\psi_j) \subset B_r(x_0)$ for all $j \in \mathbb{N}$ and \[
		\| (u-u_0) - \psi_j \|_{\sobo^{\mathbb{A},\varphi}(B_r(x_0))} \to 0. 
		\]
		{Applying Example~\ref{ex:Korn}} and considering a non-relabeled
		subsequence, we also observe that $u_0 + \psi_j \rightharpoonup^* \bar{u}$ in $\mathrm{BV}(\Omega;V)$. 
		
		By setting $\vartheta_j := 1+ |\mathbb{A}(u_0 + \psi_j)| + |\mathbb{A}u|$ and $\theta_j := |\mathbb{A}u - (\mathbb{A}(u_0 + \psi_j))|$, we can argue as in the proof of \cite[Theorem 10.7]{gmeineder_quasiconvex_2024} and get \[
		\bar{\mathcal{G}}_u^*[u; B_r(x_0)] \leq \liminf \limits_{j \to \infty} \int \limits_{B_r(x_0)} F(\mathbb{A}(u_0 + \psi_j)) \dd{x} \leq  \int \limits_{B_r(x_0)} F(\mathbb{A}u) \dd{x}.
		\]
		{Proceeding along the lines of \cite[Theorem 10.7]{gmeineder_quasiconvex_2024},} let $\varphi \in \mathrm{BV}_c(B_r(x_0);V)$ and let $(v_j) \subset u_0 + \sobo_0^{1,q}(B_r(x_0);V)$ be a generating sequence for 
		$\bar{\mathcal{G}}_{u_0}^*[u+\varphi; B_r(x_0)]$. Then $v_j - u \in \sobo_0^{\mathbb{A},\varphi}(\Omega)$ by {\eqref{eq:traceid}}. By local minimality we get \[
		\int \limits_{B_r(x_0)} F(\mathbb{A}u) \dd{x} \leq \int \limits_{B_r(x_0)} F(\mathbb{A}u + \mathbb{A}(v_j - u)) \dd{x} = \int \limits_{B_r(x_0)} F(\mathbb{A}v_j) \dd{x}.
		\]
		Passing to the limit yields \[
		\int \limits_{B_r(x_0)} F(\mathbb{A}u) \dd{x} \leq \limsup \limits_{j \to \infty} \int \limits_{B_r(x_0)} F(\mathbb{A}v_j) \dd{x} = \bar{\mathcal{G}}_{u}^*[u+\varphi; B_r(x_0)].
		\]
		We conclude that $u$ is a local $\mathrm{BV}$-minimizer for compactly supported variations.
		This completes the proof of Theorem \ref{OrliczMainThm2}.
	\end{proof}
	
	\section*{Acknowledgment}
	{The author wants to thank Franz Gmeineder for the project idea, fruitful discussions and help with the preparation of the manuscript. Moreover, financial support through a stipend of the Landesgraduiertenf\"{o}rderungsgesetz Baden-W\"{u}rttemberg is gratefully acknowledged. Lastly, the author is grateful to the anonymous referee for valuable suggestions which led to an overall improvement of the paper. } 
	\bibliographystyle{abbrv}
	\bibliography{main.bib}

\begin{thebibliography}{10}

\bibitem{acerbi_regularity_2001}
E.~Acerbi and G.~Mingione.
\newblock Regularity {Results} for a {Class} of {Functionals} with {Non}-{Standard} {Growth}.
\newblock {\em Archive for Rational Mechanics and Analysis}, 156(2):121--140, Feb. 2001.

\bibitem{adams_sobolev}
R.~A. Adams and J.~J.~F. Fournier.
\newblock {\em Sobolev spaces}, volume 140 of {\em Pure and Applied Mathematics (Amsterdam)}.
\newblock Elsevier/Academic Press, Amsterdam, second edition, 2003.

\bibitem{bagby_rearranged_1986_2}
R.~J. Bagby and D.~S. Kurtz.
\newblock A {Rearranged} {Good} $\lambda$ {Inequality}.
\newblock {\em Transactions of the American Mathematical Society}, 293(1):71--81, 1986.
\newblock Publisher: American Mathematical Society.

\bibitem{bartnik_constraint_2004}
R.~Bartnik and J.~Isenberg.
\newblock The {Constraint} {Equations}.
\newblock In P.~T. Chruściel and H.~Friedrich, editors, {\em The {Einstein} {Equations} and the {Large} {Scale} {Behavior} of {Gravitational} {Fields}}, pages 1--38, Basel, 2004. Birkhäuser.

\bibitem{beck_lipschitz_2020}
L.~Beck and G.~Mingione.
\newblock Lipschitz {Bounds} and {Nonuniform} {Ellipticity}.
\newblock {\em Communications on Pure and Applied Mathematics}, 73(5):944--1034, 2020.

\bibitem{BehnGmeinederSchiffer}
L.~Behn, F.~Gmeineder, and S.~Schiffer.
\newblock On symmetric div-quasiconvex hulls and divsym-free $\mathrm{L}^{\infty}$-truncations.
\newblock {\em Ann. Inst. H. Poincar\'{e} Anal. Non Lin\'{e}aire}, 40(6):1267–1317, 2023.

\bibitem{breit_trace-free_2017}
D.~Breit, A.~Cianchi, and L.~Diening.
\newblock Trace-{Free} {Korn} {Inequalities} in {Orlicz} {Spaces}.
\newblock {\em SIAM Journal on Mathematical Analysis}, 49(4):2496--2526, Jan. 2017.
\newblock Publisher: Society for Industrial and Applied Mathematics.

\bibitem{breit_sharp_2012}
D.~Breit and L.~Diening.
\newblock Sharp {Conditions} for {Korn} {Inequalities} in {Orlicz} {Spaces}.
\newblock {\em Journal of Mathematical Fluid Mechanics}, 14(3):565--573, Sept. 2012.

\bibitem{BREIT20121910}
D.~Breit, L.~Diening, and M.~Fuchs.
\newblock {Solenoidal Lipschitz truncation and applications in fluid mechanics}.
\newblock {\em Journal of Differential Equations}, 253(6):1910--1942, 2012.

\bibitem{BreitDieningGmeineder}
D.~Breit, L.~Diening, and F.~Gmeineder.
\newblock On the trace operator for functions of bounded $\mathbb{A}$-variation.
\newblock {\em Analysis \& PDE}, 13(2):559--594, 2020.

\bibitem{breit_quasiconvex}
D.~Breit and A.~Verde.
\newblock Quasiconvex variational functionals in {O}rlicz-{S}obolev spaces.
\newblock {\em Ann. Mat. Pura Appl. (4)}, 192(2):255--271, 2013.

\bibitem{bärlin2022mathcalaharmonic}
M.~Bärlin, F.~Gmeineder, C.~Irving, and J.~Kristensen.
\newblock {$\mathcal{A}$-harmonic approximation and partial regularity, revisited}.
\newblock {\em arXiv}, 2211.12821, 2022.

\bibitem{bärlin2022partialregularitymathbbaquasiconvexfunctionals}
M.~Bärlin and K.~Keßler.
\newblock Partial regularity for $\mathbb{A}$-quasiconvex functionals.
\newblock {\em arXiv}, 2203.00153, 2022.

\bibitem{calderon_existence_1952}
A.~P. Calderon and A.~Zygmund.
\newblock On the existence of certain singular integrals.
\newblock {\em Acta Mathematica}, 88(1):85--139, Dec. 1952.

\bibitem{carozza_partial_1998}
M.~Carozza, N.~Fusco, and G.~Mingione.
\newblock Partial regularity of minimizers of quasiconvex integrals with subquadratic growth.
\newblock {\em Annali di Matematica Pura ed Applicata}, 175(1):141--164, Dec. 1998.

\bibitem{carozza_regularity_1996}
M.~Carozza and A.~P.~d. Napoli.
\newblock {A regularity theorem for minimisers of quasiconvex integrals: the case $1 < p <2$}.
\newblock {\em Proceedings of the Royal Society of Edinburgh Section A: Mathematics}, 126(6):1181--1199, Jan. 1996.

\bibitem{chen_coercive}
C.~Y. Chen and J.~Kristensen.
\newblock On coercive variational integrals.
\newblock {\em Nonlinear Anal.}, 153:213--229, 2017.

\bibitem{cianchi2}
A.~Cianchi.
\newblock {Strong and Weak Type Inequalities for Some Classical Operators in Orlicz Spaces}.
\newblock {\em Journal of the London Mathematical Society}, 60(1):187--202, 08 1999.

\bibitem{cianchi_korn_2014}
A.~Cianchi.
\newblock Korn type inequalities in {Orlicz} spaces.
\newblock {\em Journal of Functional Analysis}, 267(7):2313--2352, Oct. 2014.

\bibitem{contiFXG}
S.~Conti and F.~Gmeineder.
\newblock {${\mathscr{A}}$-quasiconvexity and partial regularity}.
\newblock {\em Calculus of Variations and Partial Differential Equations}, 61:215, 10 2022.

\bibitem{conti_symmetric_2020}
S.~Conti, S.~Müller, and M.~Ortiz.
\newblock Symmetric {Div}-{Quasiconvexity} and the {Relaxation} of {Static} {Problems}.
\newblock {\em Archive for Rational Mechanics and Analysis}, 235(2):841--880, Feb. 2020.

\bibitem{dain}
S.~Dain.
\newblock Generalized {K}orn's inequality and conformal {K}illing vectors.
\newblock {\em Calc. Var. Partial Differential Equations}, 25(4):535--540, 2006.

\bibitem{Filippis1}
C.~De~Filippis.
\newblock Quasiconvexity and partial regularity via nonlinear potentials.
\newblock {\em J. Math. Pures Appl. (9)}, 163:11--82, 2022.

\bibitem{Filippis2}
C.~De~Filippis and B.~Stroffolini.
\newblock Singular multiple integrals and nonlinear potentials.
\newblock {\em J. Funct. Anal.}, 285(2):Paper No. 109952, 76, 2023.

\bibitem{DieningEttwein+2008+523+556}
L.~Diening and F.~Ettwein.
\newblock Fractional estimates for non-differentiable elliptic systems with general growth.
\newblock {\em Forum Mathematicum}, 20(3):523--556, 2008.

\bibitem{diening_relaxed_2020_1}
L.~Diening, M.~Fornasier, R.~Tomasi, and M.~Wank.
\newblock A {Relaxed} {Ka}{\v{c}anov iteration for the p-poisson problem}.
\newblock {\em Numerische Mathematik}, 145, 2020.

\bibitem{DieningGmeineder}
L.~Diening and F.~Gmeineder.
\newblock Sharp trace and {K}orn inequalities for differential operators.
\newblock {\em Potential Anal.}, 63(1):169--222, 2025.

\bibitem{diening_partial_2012}
L.~Diening, D.~Lengeler, B.~Stroffolini, and A.~Verde.
\newblock Partial {Regularity} for {Minimizers} of {Quasi}-convex {Functionals} with {General} {Growth}.
\newblock {\em SIAM Journal on Mathematical Analysis}, 44(5):3594--3616, Jan. 2012.
\newblock Publisher: Society for Industrial and Applied Mathematics.

\bibitem{diening_decomposition_2010}
L.~Diening, M.~Ruzicka, and K.~Schumacher.
\newblock A decomposition technique for {John} domains.
\newblock {\em Annales Academiae Scientiarum Fennicae Mathematica Volumen}, 35:87--114, Mar. 2010.

\bibitem{duoandikoetxea_fourier_2001}
J.~Z. Duoandikoetxea.
\newblock {\em Fourier {Analysis}}.
\newblock American Mathematical Soc., 2001.

\bibitem{duzaar_optimal_2003}
F.~Duzaar, J.~F. Grotowski, and K.~Steffen.
\newblock Optimal {Regularity} {Results} via {A}-{Harmonic} {Approximation}.
\newblock In S.~Hildebrandt and H.~Karcher, editors, {\em Geometric {Analysis} and {Nonlinear} {Partial} {Differential} {Equations}}, pages 265--296. Springer, Berlin, Heidelberg, 2003.

\bibitem{evans_quasiconvexity_1986}
L.~C. Evans.
\newblock Quasiconvexity and partial regularity in the calculus of variations.
\newblock {\em Archive for Rational Mechanics and Analysis}, 95(3):227--252, Sept. 1986.

\bibitem{FonsecaMuller}
I.~Fonseca and S.~M\"uller.
\newblock {$\mathscr A$}-quasiconvexity, lower semicontinuity, and {Y}oung measures.
\newblock {\em SIAM J. Math. Anal.}, 30(6):1355--1390, 1999.

\bibitem{friedrichs_boundary-value_1947}
K.~O. Friedrichs.
\newblock On the boundary-value problems of the theory of elasticity and {K}orn's inequality.
\newblock {\em Ann. of Math. (2)}, 48:441--471, 1947.

\bibitem{FrehseSeregin}
M.~Fuchs and G.~Seregin.
\newblock {Regularity of solutions to variational problems of the deformation theory of plasticity with logarithmic hardening}.
\newblock {\em Proc. St. Petersburg Math. Soc.}, 5:184--222, 1998.
\newblock English translation: Amer. Math. Soc. Transl. Ser. 2 193 (1999) 127--152.

\bibitem{Fuchs1999VariationalMF}
M.~Fuchs and G.~Seregin.
\newblock {Variational methods for fluids of Prandtl–Eyring type and plastic materials with logarithmic hardening}.
\newblock {\em Mathematical Methods in The Applied Sciences}, 22:317--351, 1999.

\bibitem{fuchs_variational_2000}
M.~Fuchs and G.~Seregin.
\newblock {\em Variational {Methods} for {Problems} from {Plasticity} {Theory} and for {Generalized} {Newtonian} {Fluids}}, volume 1749 of {\em Lecture {Notes} in {Mathematics}}.
\newblock Springer, Berlin, Heidelberg, 2000.

\bibitem{DeGiorgi1968}
E.~D. Giorgi.
\newblock Un esempio di estremali discontinue per un problema variazionale di tipo ellittico.
\newblock {\em Bollettino della Unione Matematica Italiana}, 4(1):135--137, 1968.

\bibitem{giusti2003direct}
E.~Giusti.
\newblock {\em {Direct Methods in the Calculus of Variations}}.
\newblock World Scientific, 2003.

\bibitem{giusti1968esempio}
E.~Giusti and M.~Miranda.
\newblock Un esempio di soluzioni discontinue per un problema di minimo relativo ad un integrale regolare del calcolo delle variazioni.
\newblock {\em Bollettino dell'Unione Matematica Italiana}, 1(4):219--226, 1968.

\bibitem{gmeineder_regularity_2020}
F.~Gmeineder.
\newblock The {Regularity} of {Minima} for the {Dirichlet} {Problem} on {BD}.
\newblock {\em Archive for Rational Mechanics and Analysis}, 237(3):1099--1171, Sept. 2020.

\bibitem{gmeineder_partial_2021}
F.~Gmeineder.
\newblock Partial regularity for symmetric quasiconvex functionals on {BD}.
\newblock {\em Journal de Mathématiques Pures et Appliquées}, 145:83--129, Jan. 2021.

\bibitem{gmeineder_partial_2019-1}
F.~Gmeineder and J.~Kristensen.
\newblock Partial {Regularity} for {BV} {Minimizers}.
\newblock {\em Archive for Rational Mechanics and Analysis}, 232(3):1429--1473, June 2019.

\bibitem{gmeineder_quasiconvex_2024}
F.~Gmeineder and J.~Kristensen.
\newblock Quasiconvex {Functionals} of (p, q)-{Growth} and the {Partial} {Regularity} of {Relaxed} {Minimizers}.
\newblock {\em Archive for Rational Mechanics and Analysis}, 248(5):80, Sept. 2024.

\bibitem{Irving2021partial}
C.~Irving.
\newblock {Partial regularity for minima of higher-order quasiconvex integrands with natural Orlicz growth}.
\newblock {\em Annali di Matematica}, 2025.

\bibitem{kristensen_partial_2003}
J.~Kristensen and A.~Taheri.
\newblock Partial {Regularity} of {Strong} {Local} {Minimizers} in the {Multi}-{Dimensional} {Calculus} of {Variations}.
\newblock {\em Archive for Rational Mechanics and Analysis}, 170(1):63--89, Nov. 2003.

\bibitem{kuusi_partial_2016}
T.~Kuusi and G.~Mingione.
\newblock Partial regularity and potentials.
\newblock {\em J. \'Ec. polytech. Math.}, 3:309--363, 2016.

\bibitem{mazya_examples_1968}
V.~G. Maz'ya.
\newblock Examples of nonregular solutions of quasilinear elliptic equations with analytic coefficients.
\newblock {\em Functional Analysis and Its Applications}, 2(3):230--234, July 1968.

\bibitem{DarkSide}
G.~Mingione.
\newblock Regularity of minima: an invitation to the dark side of the calculus of variations.
\newblock {\em Appl. Math.}, 51(4):355--426, 2006.

\bibitem{Raita}
B.~Raiță.
\newblock Potentials for $\mathcal{A}$-quasiconvexity.
\newblock {\em Calculus of Variations and Partial Differential Equations}, 58(3):105, June 2019.

\bibitem{ren_theory_1991}
M.~M. Rao.
\newblock {\em Theory of {Orlicz} spaces}.
\newblock Monographs and textbooks in pure and applied mathematics ; 146. New York: M. Dekker, 1991.

\bibitem{schmidt1}
T.~Schmidt.
\newblock Regularity of minimizers of $\sobo^{1,p}$-quasiconvex variational integrals with (p,q)-growth.
\newblock {\em Calculus of Variations and Partial Differential Equations}, 32:1--24, May 2008.

\bibitem{schmidt_regularity_2009}
T.~Schmidt.
\newblock Regularity of relaxed minimizers of quasiconvex variational integrals with (p, q)-growth.
\newblock {\em Archive for rational mechanics and analysis}, 193(2):311--337, 2009.
\newblock Publisher: Springer-Verlag, Berlin/Heidelberg.

\end{thebibliography}
	
\end{document}